\documentclass[a4paper,reqno,11pt]{amsart}
\usepackage{amssymb,amscd,amsxtra,psfrag,xypic}
\usepackage[dvipdfmx]{graphicx}
\usepackage{color}

\setlength{\topmargin}{5mm}
\setlength{\oddsidemargin}{7mm}
\setlength{\evensidemargin}{7mm}
\setlength{\marginparwidth}{0cm}
\setlength{\marginparsep}{0cm}
\setlength{\textheight}{220mm}
\setlength{\textwidth}{145mm}
\setlength{\footskip}{20mm}
\setlength{\headheight}{13pt}
\setlength{\headsep}{25pt}

\makeatletter
\@addtoreset{equation}{section}

\makeatother


\usepackage[pdftex]{hyperref}
\hypersetup{%
bookmarksnumbered=true,%
colorlinks=true,%
setpagesize=false,%
pdftitle={},%
pdfauthor={Takuzo Okada}}

\theoremstyle{plain}
\newtheorem{Thm}{Theorem}[section]
\newtheorem{Lem}[Thm]{Lemma}
\newtheorem{Cor}[Thm]{Corollary}
\newtheorem{Prop}[Thm]{Proposition}


\theoremstyle{definition}
\newtheorem{Def}[Thm]{Definition}
\newtheorem{Def-Lem}[Thm]{Definition-Lemma}
\newtheorem{Cond}[Thm]{Condition}
\newtheorem{Rem}[Thm]{Remark}

\newtheorem*{Ack}{Acknowledgments}

\theoremstyle{remark}




\newcommand{\codim}{\operatorname{codim}}

\newcommand{\prt}{\partial}

\newcommand{\Sing}{\operatorname{Sing}}
\newcommand{\Spec}{\operatorname{Spec}}

\newcommand{\singu}{\operatorname{sing}}


\newcommand{\bsum}{\sum\nolimits}


\newcommand{\mbA}{\mathbb{A}}

\newcommand{\mbC}{\mathbb{C}}

\newcommand{\mbP}{\mathbb{P}}

\newcommand{\mbZ}{\mathbb{Z}}

\newcommand{\mcF}{\mathcal{F}}

\newcommand{\mcL}{\mathcal{L}}
\newcommand{\mcM}{\mathcal{M}}

\newcommand{\mcO}{\mathcal{O}}

\newcommand{\mcV}{\mathcal{V}}
\newcommand{\mcW}{\mathcal{W}}

\newcommand{\mcX}{\mathcal{X}}

\newcommand{\mfm}{\mathfrak{m}}

\newcommand{\msp}{\mathsf{p}}
\newcommand{\msq}{\mathsf{q}}
\newcommand{\K}{\Bbbk}

\newcommand{\inj}{\hookrightarrow}


\newcommand{\crit}{\operatorname{cr}}

\newcommand{\CH}{\operatorname{CH}}

\newcommand{\rest}{\operatorname{rest}}

\title[Smooth weighted hypersurfaces that are not stably rational]{Smooth weighted hypersurfaces that are not stably rational}
\author{Takuzo~Okada}
\address{Department of Mathematics, Faculty of Science and Engineering, Saga University, Saga 840-8502 Japan}
\email{okada@cc.saga-u.ac.jp}
\subjclass[2010]{Primary 14E08; Secondary 14J45.}
\date{}

\begin{document}

\begin{abstract}
We prove the failure of stable rationality for many smooth well formed weighted hypersurfaces of dimension at least $3$.
It is in particular proved that a very general smooth well formed Fano weighted hypersurface of index one is not stably rational.
\end{abstract}

\maketitle


\section{Introduction} \label{sec:intro}

Totaro \cite{Totaro} proved the failure of stable rationality for many hypersurfaces by developing the combination of the arguments of Voisin \cite{Voisin}, Colliot-Th\'el\`ene, Pirutka \cite{CTP} and Koll\'ar \cite{Kollar1}.
To be precise, it is proved that a very general complex hypersurface of degree $d$ in $\mbP^{n+1}$ is not stably rational if $n \ge 3$ and $d \ge 2 \lceil (n+2)/3 \rceil$.  
The aim of this article is to generalize this result to smooth weighted hypersurfaces.

In the study of stable rationality of smooth weighted hypersurfaces, main objects to be considered are Fano varieties.
Smooth Fano weighted hypersurfaces of dimension $3$ are $X_4 \subset \mbP (1^4,2)$, $X_6 \subset \mbP (1^4,3)$, $X_6 \subset \mbP (1^3,2,3)$ and hypersurfaces of degree at most $4$ in $\mbP^4$.
Here the subscript of $X$ indicates the degree of the defining equation of $X$ and, for instance, $\mbP (1^4,2) = \mbP (1,1,1,1,2)$.
The failure of stable rationality of very general $X_4 \subset \mbP (1^4,2)$, $X_6 \subset \mbP (1^4,3)$ and $X_6 \subset \mbP (1^3,2,3)$ is proved in \cite{Voisin}, \cite{Beauville} and \cite{HT}, respectively.
The failure of stable rationality of a very general terminal Fano weighted hypersurface of index $1$ (which belongs to one of the famous $95$ families) is also proved in \cite{OkorbFano}.
It is proved in \cite{Okcyclic} that a very general $4$-dimensional smooth weighted hypersurface of index $1$ (i.e.\ $I_X = 1$, see below) is not stably rational.

Besides the hypersurfaces in a projective space, one of the most familiar varieties which can be described as a weighted hypersurface are cyclic covers of $\mbP^n$.
It is proved in \cite{CTPcyclic} and \cite{Okcyclic} that a cyclic cover of $\mbP^n$ branched along a very general hypersurface of degree $d$ is not stably rational if $d \ge n+1$.

For a smooth well formed weighted hypersurface $X = X_d \subset \mbP (a_0,\dots,a_{n+1})$ of degree $d$, we define
\[
\begin{split}
a_{\Sigma} &= a_0 + a_1 + \cdots + a_{n+1}, \\
a_{\Pi} &= a_0 a_1 \cdots a_{n+1}, \\
I_X &= a_{\Sigma} - d.
\end{split}
\]
We see that $X$ is a Fano manifold if and only if $I_X > 0$, and in this case $I_X$ is called the {\it index} of $X$.
Note that $X$ is not stably rational if $I_X \le 0$.
It is known that the weights $a_0,\dots,a_{n+1}$ are mutually coprime to each other and that $d$ is divisible by $a_{\Pi}$ (see Lemma \ref{lem:charactsmoothwh}).
The following are the main results of this article.

\begin{Thm} \label{mainthm1}
Let $X$ be a very general smooth well formed weighted hypersurface of degree $d$ in $\mbP_{\mbC} (a_0,\dots,a_{n+1})$, where $n \ge 3$.
If the inequality
\[
I_X \le \max \{a_0,\dots,a_{n+1}\},
\]
holds, then $X$ is not stably rational. 
\end{Thm}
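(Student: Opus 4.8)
The plan is to run the degeneration (specialization) method of Voisin \cite{Voisin} and Colliot-Th\'el\`ene--Pirutka \cite{CTP}, in the form that permits the special fiber to live over a field of positive characteristic (Totaro \cite{Totaro}, building on Koll\'ar \cite{Kollar1}). By the specialization theorem for stable rationality it suffices to exhibit a \emph{single} member $X_0$ of the family, which we are free to take over an algebraically closed field of positive characteristic, together with a resolution $\pi \colon \tilde{X}_0 \to X_0$ such that (i) $\pi$ is universally $\CH_0$-trivial, and (ii) $\tilde{X}_0$ is \emph{not} universally $\CH_0$-trivial, i.e.\ carries a nonzero stable birational obstruction. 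Granting (i) and (ii), the very general complex fiber of the family fails to be (retract, hence stably) rational, which is the assertion of the theorem.

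First I would relabel the weights so that $a_{n+1} = \max\{a_0,\dots,a_{n+1}\}$. Since $a_{\Pi} \mid d$ (Lemma \ref{lem:charactsmoothwh}) we have $a_{n+1} \mid d$; set $m = d/a_{n+1}$, noting $m \ge 2$ in the relevant range. I would then fix a prime $p$ dividing $m$ and choose the reduction $X_0$ over $\overline{\mbF}_p$ so that its defining equation, after projection from the coordinate point $[0:\cdots:0:1]$, is of the special inseparable type $x_{n+1}^{m}\,g(x_0,\dots,x_n) = h(x_0,\dots,x_n)$ up to higher-order corrections. This presents $X_0$ as a degree-$m$ cover of $Z = \mbP(a_0,\dots,a_n)$ branched along an explicit divisor, whose purely inseparable degree-$p$ part is exactly the situation to which Koll\'ar's differential-forms method applies.

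The obstruction needed for (ii) arises from global differential forms: on the resolution $\tilde{X}_0$ of such a cover one produces a nonzero element of $H^0(\tilde{X}_0, \Omega^{n-1})$ (more precisely, of the coherent group governing universal $\CH_0$-triviality), pulled back from sections of a suitable twist of $\omega_Z$. The numerical hypothesis enters precisely here: the relevant twist on $Z$ is effective, and so has the section producing a nonzero form, exactly when $I_X = a_{\Sigma} - d \le \max\{a_i\} = a_{n+1}$. In the cyclic-cover case this condition specializes to the bound of \cite{CTPcyclic, Okcyclic}, which is a reassuring consistency check that the two hypotheses agree. Thus the inequality of the theorem is what guarantees the existence of the obstruction.

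I expect the main obstacle to lie in step (i), the singularity analysis. The ambient weighted projective spaces $\mbP(a_0,\dots,a_{n+1})$ and $Z$ carry cyclic quotient singularities, and the special fiber $X_0$ acquires singularities both from meeting these strata and from the ramification of the cover; one must choose $g,h$ general enough that $\Sing X_0$ is small and resolvable by a morphism whose fibers are rational. Controlling $\Exc(\pi)$ and checking universal $\CH_0$-triviality of $\pi$ fiber by fiber, while simultaneously arranging that the differential form of step (ii) remains nonzero on $\tilde{X}_0$, is the delicate technical core of the argument; the remainder is the now-standard specialization formalism together with the characterization of smooth well formed weighted hypersurfaces recalled in Lemma \ref{lem:charactsmoothwh}.
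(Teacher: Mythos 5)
Your skeleton (specialize to characteristic $p$, apply Koll\'ar's differential-form construction to an inseparable cover, conclude via universal $\CH_0$-triviality) is the right one, and your effectivity computation is correct: the twist $\omega_Z\otimes\mcL^{m}$ on $Z=\mbP(a_0,\dots,a_n)$ is effective exactly when $d\ge a_{\Sigma}-a_{n+1}$, i.e.\ $I_X\le a_{\max}$, which is also how the hypothesis enters the paper's proof. The gap is in your choice of degeneration. You degenerate all the way to a degree-$m$ cyclic cover $x_{n+1}^{m}=h(x_0,\dots,x_n)$, $m=d/a_{n+1}$, of $Z=\mbP(a_0,\dots,a_n)$. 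This base is \emph{singular} at every coordinate point $\msp_j$ with $a_j>1$, $j\le n$, whereas Lemma \ref{lem:covtech} (the source of both the invertible subsheaf $\mcM\subset(\Omega^{n-1})^{\dd}$ and the universally $\CH_0$-trivial resolution) requires the base of the cover to be smooth with only admissible critical points. Worse, in the essential case $d=a_{\Pi}$ with at least three weights exceeding $1$, your special fiber is genuinely singular over those base points, and not in an admissible way. Concretely: any prime $p\mid m$ divides exactly one weight, say $p\mid a_n$; pick $j\le n-1$ with $a_j>1$. Points of $X_0$ where only $x_j$ and $x_{n+1}$ are nonzero exist, because the only degree-$d$ monomial in $x_j$ alone is $x_j^{d/a_j}$, and at such a point all partials of $x_{n+1}^m+h$ vanish: $\partial/\partial x_{n+1}$ gives $mx_{n+1}^{m-1}\equiv 0$ since $p\mid m$; $\partial h/\partial x_i=0$ for $i\ne j$, $i \le n$, since a monomial $x_ix_j^k$ of degree $d$ would force $a_j\mid a_i$, impossible by coprimality; and $\partial h/\partial x_j=c\,(d/a_j)\,x_j^{d/a_j-1}\equiv 0$ since $a_n\mid d/a_j$, so $p\mid d/a_j$. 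Thus $X_0$ is singular along the whole fiber over $\msp_j\in\Sing Z$; these points are not critical points of a section over a smooth base, so the admissible-critical-point machinery never applies to them, and your proposal offers no substitute (resolving them while keeping universal $\CH_0$-triviality and the nonvanishing form is exactly what cannot be taken for granted). The same computation shows the problem persists for every admissible choice of $p$, so it cannot be fixed by relabeling.

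The paper's proof is engineered precisely to avoid this. It first reduces to the case $d=a_{\Pi}$, $r\le n-1$ (the case $d\ne a_{\Pi}$ follows from Theorem \ref{mainthm2}, and $r\ge n$ from the known cyclic-cover results; Lemma \ref{lem:known1}); note that even in the reduction, the cover used is Mori's degeneration over a \emph{smooth general hypersurface} $(g=0)$, never over the singular weighted projective space itself. In the remaining case it does \emph{not} kill the intermediate terms: the special fiber is $y^{ea_n}+y^{(e-1)a_n}f_b+\cdots+f_{eb}=0$ with all $f_{ib}$ general, where $b=a_na_{n+1}$, $e=a_0\cdots a_{n-1}$, and $p\mid a_n$. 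This exhibits $X$ as the degree-$a_n$ cover, extracting roots of the \emph{coordinate} section $z$, of the general hypersurface $Z=(z^e+z^{e-1}f_b+\cdots+f_{eb}=0)\subset\mbP(a_0,\dots,a_n,b)$, which is smooth away from the small locus $\Gamma_Z$ by Lemma \ref{lem:crismcharp2}; smoothness of $X$ along $\pi^{-1}(\Gamma_Z)$ and over $\Delta_Z^{\circ}$ is then proved in Lemmas \ref{lem:smalongGamma} and \ref{lem:critDelta1} using exactly the generality of the middle coefficients that your degeneration discards. This intermediate-hypersurface factorization is the ``mixed characteristic degeneration different from Totaro's'' announced in the introduction, and it is the key idea missing from your proposal.
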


\begin{Thm} \label{mainthm2}
Let $X$ be a very general smooth well formed weighted hypersurface of degree $d$ in $\mbP_{\mbC} (a_0,\dots,a_{n+1})$, where $n \ge 3$.
Suppose that $e := d/a_{\Pi} > 1$ and let $p$ be the smallest prime factor of $e$.
If the inequality
\[
d \ge \frac{p}{p+1} a_{\Sigma}
\]
holds, then $X$ is not stably rational.
\end{Thm}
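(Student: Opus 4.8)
The plan is to follow the strategy that combines the specialization method of Colliot-Th\'el\`ene--Pirutka \cite{CTP} and Voisin \cite{Voisin} with Koll\'ar's reduction to characteristic $p$ \cite{Kollar1}, in the form refined by Totaro \cite{Totaro}. By this method, in order to prove that a very general $X$ over $\mbC$ is not stably rational, it is enough to produce a single member $X_0$ of the family, defined over an algebraically closed field of characteristic $p$, together with a resolution $\pi \colon \tilde{X}_0 \to X_0$ such that $\pi$ is universally $\CH_0$-trivial while $\tilde{X}_0$ itself is \emph{not} universally $\CH_0$-trivial. The numerical hypothesis enters through the equivalent reformulation
\[
d \ge \tfrac{p}{p+1} a_{\Sigma} \iff p\, I_X \le d,
\]
the right-hand inequality being exactly what forces the obstructing form below to be nonzero.

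First I would construct $X_0$. By Lemma \ref{lem:charactsmoothwh}, $a_{\Pi} \mid d$, so $p \mid e = d/a_{\Pi}$ forces $p \mid N$, where $N := d/a_{n+1}$ is the exponent of the pure power $x_{n+1}^{N}$, which is a monomial of degree $d$. I would specialize the defining equation, in characteristic $p$, to $X_0 = \{ x_{n+1}^{N} = g(x_0,\dots,x_n) \}$, exhibiting $X_0$ as the $N$-fold cyclic cover of $\mbP(a_0,\dots,a_n)$ branched along the general degree $d$ hypersurface $\{ g = 0 \}$. Because $p \mid N$, in characteristic $p$ one has $d(x_{n+1}^{N}) = 0$, hence $dg = 0$ as a relation on $X_0$; this degeneration of the differential is the source of the global forms on a resolution, exactly as in Koll\'ar \cite{Kollar1} and Totaro \cite{Totaro}.

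Next I would resolve $X_0$, whose singularities arise from the singular locus of the branch divisor $\{ g = 0 \}$ and from the cyclic quotient singularities of the ambient weighted projective space; for general $g$ these are mild. The goal is to choose a resolution $\pi \colon \tilde{X}_0 \to X_0$ all of whose fibres are rational over their residue fields, so that $\pi$ is universally $\CH_0$-trivial. Granting this, the relation $dg = 0$ produces a nonzero global section of $\Omega^{n-1}_{\tilde{X}_0}$ obtained by pulling back and twisting $dg$; its space of global sections is identified with the sections of an explicit line bundle on $\mbP(a_0,\dots,a_n)$, and a monomial count shows this group is nonzero precisely when $p\, I_X \le d$. (For $p = 2$ and all weights equal to $1$, this recovers Totaro's threshold $d \ge 2\lceil (n+2)/3 \rceil$.) Totaro's criterion \cite{Totaro} then shows $\tilde{X}_0$ is not universally $\CH_0$-trivial, and the specialization method gives the conclusion.

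The hard part will be the resolution step: verifying that $\pi$ can be taken universally $\CH_0$-trivial in this weighted characteristic $p$ setting. The difficulty is that the (partly inseparable) cover structure of $x_{n+1}^{N} = g$ interacts with the cyclic quotient singularities of $\mbP(a_0,\dots,a_{n+1})$, and one must check that a general $g$ --- compatible with well-formedness and with smoothness of the general member $X$ --- keeps every singularity in a shape whose resolution has $\CH_0$-trivial fibres. A secondary, more bookkeeping obstacle is the sharp matching of the space of global $(n-1)$-forms on $\tilde{X}_0$ with the inequality $d \ge \tfrac{p}{p+1} a_{\Sigma}$; here the pairwise coprimality of the weights from Lemma \ref{lem:charactsmoothwh} is what makes the monomial count come out cleanly.
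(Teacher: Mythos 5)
Your framework (specialization plus Koll\'ar-type differential forms in characteristic $p$) is the right one, but the degeneration you chose cannot produce the bound $d \ge \frac{p}{p+1} a_{\Sigma}$, and the key numerical claim in your form count is false. If you specialize to $X_0 = \{x_{n+1}^N = g\}$ with $N = d/a_{n+1}$, viewed as a cover of $Z = \mbP(a_0,\dots,a_n)$, then Koll\'ar's construction (Lemma \ref{lem:covtech}, with $\mcL = \mcO_Z(a_{n+1})$ and $m = N$) yields the invertible subsheaf
\[
\mcM \cong \pi^*\bigl(\omega_Z \otimes \mcL^N\bigr) \cong \mcO_{X_0}\bigl(d - (a_0 + \cdots + a_n)\bigr) = \mcO_{X_0}\bigl(d - a_{\Sigma} + a_{n+1}\bigr),
\]
which has nonzero sections if and only if $I_X \le a_{n+1}$, \emph{not} if and only if $p I_X \le d$. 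These conditions differ exactly in the range the theorem is meant to cover: for the quartic fourfold $X_4 \subset \mbP^5$ (all $a_i = 1$, $d=4$, $p=2$) the hypothesis of Theorem \ref{mainthm2} holds with equality, $d = \frac{2}{3} a_{\Sigma} = 4$, yet your sheaf is $\mcO_{X_0}(4-5) = \mcO_{X_0}(-1)$, with no sections. The same problem invalidates your parenthetical claim that your count "recovers Totaro's threshold'': the pure cyclic-cover degeneration is Koll\'ar's original one and gives $d \ge n+1$; Totaro's improvement to $d \ge 2\lceil (n+2)/3 \rceil$ comes precisely from \emph{not} using that degeneration. Refactoring the cover (say as a $p$-th root extraction of $g$) does not help, because the base is still $\mbP(a_0,\dots,a_n)$, whose canonical sheaf is too negative. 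What your argument proves, at best, is the threshold $I_X \le a_{\max}$ of Theorem \ref{mainthm1} (in the spirit of \cite{CTPcyclic}, \cite{Okcyclic}), and this is the approach the paper uses elsewhere, not here.

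The missing idea is the Mori--Totaro degeneration used in the paper (Remark \ref{rem:spargumentThm2}, after \cite[Example 4.3]{Mori}): with $b = d/p$, consider
\[
\mcX = (y^p - f = t y - g = 0) \subset \mbP_{\mbC}(a_0,\dots,a_{n+1},b) \times \mbA^1_t ,
\]
whose fibers over $t \ne 0$ are very general degree $d$ hypersurfaces (eliminate $y$) and whose fiber over $t=0$ is the degree $p$ cover $(y^p - f = g = 0)$ of the degree $b$ hypersurface $Z = (g=0) \subset \mbP(a_0,\dots,a_{n+1})$, branched along $(f=g=0)$. Reducing mod $p$ makes $\pi \colon X \to Z$ purely inseparable, and Lemma \ref{lem:covtech} now gives
\[
\mcM \cong \pi^*\bigl(\omega_Z \otimes \mcO_Z(d)\bigr) \cong \mcO_X\bigl(b + d - a_{\Sigma}\bigr) = \mcO_X\bigl(\tfrac{p+1}{p} d - a_{\Sigma}\bigr),
\]
because adjunction on the degree $b$ hypersurface $Z$ contributes the extra $b = d/p$; this is nonzero exactly under the hypothesis of the theorem. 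The remaining work (the reductions of Lemma \ref{lem:known2}, smoothness of $Z$ via Lemma \ref{lem:crismcharp}, and the admissible-critical-point analysis for $f$) is routine once this special fiber is in place. Your stated worry about the resolution step is legitimate but secondary; the fatal problem is the choice of degeneration.
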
 

\begin{Thm} \label{mainthm3}
Let $X$ be a very general smooth well formed weighted hypersurface of degree $d$ in $\mbP_{\mbC} (a_0,\dots,a_{n+1})$, where $n \ge 3$.
Suppose that $e := d/a_{\Pi} > 1$ is odd.
If the inequality
\[
d \ge a_{\Pi} + \frac{2}{3} a_{\Sigma}
\]
holds, then $X$ is not stably rational.
\end{Thm}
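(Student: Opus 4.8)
The plan is to prove non–stable-rationality through the degeneration (or specialization) method of Voisin \cite{Voisin} and Colliot-Th\'el\`ene–Pirutka \cite{CTP}, in the characteristic-$p$ form developed by Totaro \cite{Totaro}. By that method it is enough to produce a single, mildly singular member $X_0$ of the family, defined over $\overline{\mbF}_2$, such that (i) $X_0$ admits a universally $\CH_0$-trivial resolution $\pi \colon \widetilde{X}_0 \to X_0$, and (ii) $\widetilde{X}_0$ is not universally $\CH_0$-trivial. Indeed, specializing a very general complex $X$ to such an $X_0$, the failure of universal $\CH_0$-triviality of $\widetilde{X}_0$ obstructs that of $X$, and a stably rational smooth projective variety has universally trivial $\CH_0$; by Lemma \ref{lem:charactsmoothwh} the weights are pairwise coprime and $a_{\Pi}\mid d$, so the family and its reduction modulo $2$ behave well and the general fibre of the family is the smooth $X$ we wish to treat.

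The construction of $X_0$ is where the hypothesis that $e$ is odd is used. I would choose a coordinate $x_i$ of weight $a_i$ and degenerate the defining equation so that, modulo $2$, it acquires the purely inseparable shape $x_i^{2} a + b$, where $a$ and $b$ are weighted-homogeneous in the remaining variables and $\deg (x_i^{2} a) = d$; this exhibits $X_0$ as an inseparable double cover of the weighted projective space obtained by deleting $x_i$, the phenomenon special to characteristic $2$. The role of the parity hypothesis is as follows. When $e$ is odd the smallest prime factor supplied by Theorem \ref{mainthm2} is at least $3$, forcing the larger constant $p/(p+1)\ge 3/4$; by instead spending $a_{\Pi}$ of the available degree — note $d-a_{\Pi}=(e-1)a_{\Pi}$ is even — one can manufacture the square term $x_i^{2}a$ and thereby work in characteristic $2$ with the sharper constant $2/3$. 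The additive $a_{\Pi}$ in the hypothesis is exactly the degree paid to create this double-cover structure.

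The obstruction in (ii) is Koll\'ar's differential-form argument \cite{Kollar1}. In characteristic $2$ one has $d(x_i^{2}a)=x_i^{2}\,\mathrm{d}a$, so the relation $x_i^{2}a=b$ on $X_0$ yields $x_i^{2}\,\mathrm{d}a=\mathrm{d}b$, and from this identity one builds a nonzero global $(n-1)$-form, i.e.\ a nonzero element of $H^{0}\!\left(\widetilde{X}_0,\Omega^{\,n-1}_{\widetilde{X}_0}\right)$. Such a form obstructs universal $\CH_0$-triviality because a universally $\CH_0$-trivial smooth projective variety carries no nonzero global differential forms. That the resulting section is nonzero reduces to the effectivity of an appropriate twist $\omega\otimes\mcO(\lceil d/2\rceil)$ on the deleted weighted projective space, and a count of weighted-homogeneous monomials shows this effectivity is guaranteed by the hypothesis $d\ge a_{\Pi}+\tfrac{2}{3}a_{\Sigma}$; here $\tfrac{2}{3}=\tfrac{2}{2+1}$ is the characteristic-$2$ value of the constant $p/(p+1)$ of Theorem \ref{mainthm2}.

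The principal obstacle, as in all instances of this strategy, is to verify (i): the special fibre must be singular enough to arise as a genuine degeneration yet have only singularities admitting a universally $\CH_0$-trivial resolution. I would control $\Sing X_0$ by a careful choice of $a$ and $b$, confining the singular locus to the intersection of $\{a=0\}$ with the coordinate strata and arranging that, after blowing up, every fibre of $\pi$ over a singular point is a point or a tree of rational varieties, so that $\pi$ is universally $\CH_0$-trivial. Reconciling this requirement on the resolution with the presence of the square term $x_i^{2}a$ that creates the obstructing form — uniformly over all weight data $(a_0,\dots,a_{n+1})$ satisfying the hypotheses, and using the well-formedness and coprimality from Lemma \ref{lem:charactsmoothwh} — is the delicate part of the argument.
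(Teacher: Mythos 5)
Your overall toolkit (specialization to characteristic $2$ in the style of \cite{CTP}, Koll\'ar's inseparable-cover differential forms, the obstruction of Lemma \ref{lem:Totaro}) is the right one, but the degeneration you propose is not the paper's, and as described it does not close. Because $e$ is odd, $d$ is an \emph{odd} multiple of $a_{\Pi}$, so the characteristic-$2$ Mori-type degeneration of Remark \ref{rem:spargumentThm2} is not directly available; the paper therefore first degenerates $X$ to a \emph{reducible} union of very general hypersurfaces of degrees $(e-1)a_{\Pi}$ and $a_{\Pi}$, then degenerates the even-degree component to a purely inseparable double cover $Y \to Z$ of a general hypersurface $Z$ of degree $(e-1)a_{\Pi}/2$, branched via a \emph{general} section $f \in H^0(Z,\mcL^2)$. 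Since the special fiber is reducible, \cite[Th\'eor\`eme 1.14]{CTP} does not apply and the paper instead runs Totaro's argument (Remark \ref{rem:spargumentThm3}): injectivity of $H^0(\tilde{Y},\Omega^{n-1}_{\tilde{Y}}) \to H^0(\tilde{Z}_H,\Omega^{n-1}_{\tilde{Z}_H})$ against $H^0(Z_H,\omega_{Z_H})=0$. Your construction avoids the reducible fiber by taking the integral hypersurface $X_0 = (x_i^2 a + b = 0)$, but this is exactly where the gap lies: the projection $\pi \colon X_0 \to \mbP' := \mbP(a_0,\dots,\hat{a}_i,\dots,a_{n+1})$ has branch data the \emph{product} section $ab \in H^0(\mbP',\mcO_{\mbP'}(2d-2a_i))$, and no genericity of $a$ and $b$ can prevent non-admissible, non-isolated critical points along the codimension-two locus $(a=b=0)\subset\mbP'$ (equivalently, $\pi$ has one-dimensional fibers there, and the cone over this locus is a \emph{divisor} inside $X_0$). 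Hence Lemma \ref{lem:covtech} does not apply, and your plan to control the singular locus ``by a careful choice of $a$ and $b$'' cannot touch this locus, which is forced by the shape of the equation. Extending the Koll\'ar form across a resolution of this locus, with control of the twist, is precisely the missing hard step.

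The numerics you invoke also do not belong to your construction. For the cover $\pi$ above, Koll\'ar's sheaf is $\pi^*\bigl(\omega_{\mbP'}\otimes\mcO_{\mbP'}(2d-2a_i)\bigr) \cong \mcO(2d - a_i - a_{\Sigma})$ (before any correction along the bad divisor), whose effectivity reads $2d \ge a_{\Sigma} + a_i$, not $d \ge a_{\Pi} + \tfrac{2}{3} a_{\Sigma}$. If that really sufficed, your argument would prove, already for ordinary hypersurfaces, failure of stable rationality for $d \ge (n+3)/2$, which is strictly stronger than Totaro's theorem --- a strong signal that the loss occurs exactly at the unaddressed locus. Relatedly, the degree you ``spend'' on the square term is $2a_i$, not $a_{\Pi}$, and your construction makes sense for every $e>1$, so the hypothesis that $e$ is odd is never actually used in your argument. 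In the paper that hypothesis is structural, and the constants in the bound arise as $\mcM \cong \mcO_Y\bigl(\tfrac{3}{2}(d-a_{\Pi}) - a_{\Sigma}\bigr)$: the Theorem \ref{mainthm2} constant $\tfrac{p}{p+1}=\tfrac{2}{3}$ (for $p=2$) applied to the degree $d - a_{\Pi}$ remaining after splitting off the degree-$a_{\Pi}$ component, together with the vanishing $H^0(Z_H,\omega_{Z_H})=0$ required by the reducible-fiber argument.
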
 

Focusing on the index of Fano hypersurfaces, Totaro's result can be interpreted as follows: a very general Fano hypersurface of index $I$ in $\mbP^{n+1}$ is not stably rational for $n \ge 3 I$.
As a corollary to the above theorems, we can generalize this to weighted hypersurfaces.

\begin{Cor} \label{maincor1}
For a given integer $I$, there exists a constant $N_I$ depending only on $I$ such that a very general smooth well formed weighted hypersurface of dimension $n$ which is not a linear cone is not stably rational for $n \ge N_I$.
\end{Cor}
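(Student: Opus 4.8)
The plan is to deduce the corollary from Theorems \ref{mainthm1}, \ref{mainthm2} and \ref{mainthm3} by a case analysis on the size of the largest weight, keeping the index $I_X = I$ fixed. The case $I \le 0$ is immediate, since such an $X$ is never stably rational, so I would assume $I \ge 1$ (and hence $X$ Fano) and set $a := \max\{a_0,\dots,a_{n+1}\}$. The first case is $a \ge I$: then $I_X = I \le a = \max\{a_0,\dots,a_{n+1}\}$, so Theorem \ref{mainthm1} applies directly to the (smooth, well formed, non-linear-cone) $X$ and gives the conclusion, with no lower bound on $n$ beyond $n \ge 3$.

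The substance lies in the complementary case $a \le I-1$, where every weight is bounded by $I-1$. Here I would use that the weights are pairwise coprime (Lemma \ref{lem:charactsmoothwh}): the weights that are $\ge 2$ have pairwise disjoint sets of prime factors, so there are at most $\pi(I-1)$ of them, say $b_1,\dots,b_m$ with $m \le \pi(I-1)$. Consequently $a_\Pi = b_1 \cdots b_m \le (I-1)^{\pi(I-1)} =: C_I$ is bounded in terms of $I$ alone, while $a_\Sigma = (n+2-m) + \sum_j b_j \ge n+2-\pi(I-1)$ grows without bound as $n \to \infty$. Since $a_\Sigma = I + e\,a_\Pi$ with $a_\Pi \le C_I$, the integer $e = d/a_\Pi$ tends to infinity; in particular $e > 1$ as soon as $n$ exceeds an explicit threshold depending only on $I$. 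Note also that $d = a_\Sigma - I$ then exceeds $I-1 \ge a$ for large $n$, so $X$ is automatically not a linear cone in this range.

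The key step is a dichotomy on the parity of $e$. If $e$ is even, its smallest prime factor is $p = 2$, and the hypothesis of Theorem \ref{mainthm2} reads $d \ge \tfrac{2}{3} a_\Sigma$, which using $d = a_\Sigma - I$ is equivalent to $a_\Sigma \ge 3I$ and so holds for $n$ large. If instead $e$ is odd (and $>1$), I would invoke Theorem \ref{mainthm3}: its hypothesis $d \ge a_\Pi + \tfrac{2}{3} a_\Sigma$ rewrites as $a_\Sigma \ge 3a_\Pi + 3I$, and since $a_\Pi \le C_I$ it suffices that $a_\Sigma \ge 3C_I + 3I$, again valid for $n$ large. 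Taking $N_I$ to be the maximum of the three explicit thresholds produced above (and at least $3$) yields a constant depending only on $I$ for which every such $X$ of dimension $n \ge N_I$ falls under one of the three theorems.

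The main obstacle is exactly this parity split. Theorem \ref{mainthm2} alone does not suffice, because for odd $e$ the smallest prime factor $p$ may be large, pushing the ratio $p/(p+1)$ close to $1$ and making the required inequality fail for any fixed index; Theorem \ref{mainthm3}, whose bound is independent of $p$ at the cost of the additive term $a_\Pi$, is precisely what rescues the odd case, and it is usable only because the preceding coprimality argument keeps $a_\Pi$ bounded by $C_I$. The point that must be checked with care is that all the thresholds depend on $I$ alone and not on the particular weight system, which is what makes $N_I$ well defined.
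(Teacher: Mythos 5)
Your proof is correct, and its skeleton is the same as the paper's: the dichotomy $I \le a_{\max}$ (Theorem \ref{mainthm1} applies) versus all weights $< I$, the pairwise-coprimality bound making $a_\Pi$ bounded by a function of $I$ alone, and then a parity split on $e$ handled by Theorems \ref{mainthm2} and \ref{mainthm3}, with $N_I$ a maximum of explicit thresholds. Two local deviations deserve comment. First, you exclude $e=1$ by noting that $a_\Sigma = I + a_\Pi \le I + C_I$ bounds $n$; the paper instead argues that only finitely many weight systems with $d = a_\Pi$ and index $I$ exist and takes $N'_I$ beyond all of them. Your version is more direct and gives an explicit threshold, but it is the same underlying observation. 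Second, you absorb $e=3$ into the odd case and invoke Theorem \ref{mainthm3} there, whereas the paper treats $e=3$ separately via Theorem \ref{mainthm2} with $p=3$ (threshold $n \ge 4I-2$). Your route looks alarming at first: as recorded in Lemma \ref{lem:known3}, for $e=3$ the hypothesis $d \ge a_\Pi + \frac{2}{3}a_\Sigma$ is equivalent to $d \ge a_\Sigma$, i.e.\ to $I \le 0$, so no index-$I$ Fano hypersurface with $e=3$ can ever satisfy it. Nevertheless your argument is sound, because the chain $a_\Sigma \ge n+2 \ge 3C_I + 3I \ge 3a_\Pi + 3I$ is a valid consequence of the case hypotheses; combined with the identity $a_\Sigma = 3a_\Pi + I$ forced by $e=3$, it shows that the subcase $e=3$ is simply empty once $n$ exceeds your threshold, so Theorem \ref{mainthm3} applies vacuously there. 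Since $N_I$ is a maximum of thresholds in either treatment, both arguments produce a valid constant; it would, however, be cleaner to state explicitly that the $e=3$ subcase is vacuous for large $n$, rather than leaving the reader to reconcile your use of Theorem \ref{mainthm3} with Lemma \ref{lem:known3}.
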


\begin{Cor} \label{maincor2}
Let $X$ be a very general smooth well formed weighted hypersurface of index $I_X$ and of dimension at least $3$, which is not a linear cone.
Then the following hold.
\begin{enumerate}
\item If $I_X = 1$, then $X$ is not stably rational.
\item If $I_X = 2$, then $X$ is not stably rational except possibly for $X_3 \subset \mbP^4$ and $X_5 \subset \mbP^6$.
\item If $I_X = 3$, then $X$ is not stably rational except possibly for $X_2 \subset \mbP^4$, $X_3 \subset \mbP^5$, $X_4 \subset \mbP^6$, $X_5 \subset \mbP^7$ and $X_7 \subset \mbP^9$.
\end{enumerate}
\end{Cor}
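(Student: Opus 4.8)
The plan is to derive each case as a direct consequence of the three main theorems by carefully cataloguing the weight systems compatible with the prescribed index. Since the weights $a_0,\dots,a_{n+1}$ are pairwise coprime (Lemma \ref{lem:charactsmoothwh}), at most one of them can exceed $1$, so every admissible weight system has the shape $\mbP(1^{n+1},a)$ with $a \ge 1$. Writing $a_{\Sigma} = n+1+a$ and $a_{\Pi} = a$, the relation $I_X = a_{\Sigma}-d$ together with $d = e\,a_{\Pi} = e\,a$ (valid since $a_{\Pi} \mid d$) gives $d = a_{\Sigma}-I_X = n+1+a-I_X$, and hence the single Diophantine constraint $ea = n+1+a-I_X$. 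For fixed $I_X$ and fixed $a$ this determines $e$, and the "not a linear cone" hypothesis forces $e \ge 2$ (equivalently $d \ge 2a$), ruling out the degenerate cases. I would tabulate, for each target index, the finitely many pairs $(a,e)$ and check which theorem applies.

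For part (1), take $I_X = 1$. The first step is to show $I_X = 1 = \max\{a_0,\dots,a_{n+1}\}$ can only fail when some weight exceeds $1$; but whenever $a \ge 2$ we have $\max\{a_i\} = a \ge 2 > 1 = I_X$, so Theorem \ref{mainthm1} applies immediately, while if $a=1$ the ambient space is $\mbP^{n+1}$ and $\max\{a_i\}=1 = I_X$, so Theorem \ref{mainthm1} again applies. Thus part (1) follows from Theorem \ref{mainthm1} alone, with no residual cases.

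For parts (2) and (3) the strategy is the same but Theorem \ref{mainthm1} no longer covers the cases with all weights equal to $1$ (genuine projective hypersurfaces), so I would invoke Theorems \ref{mainthm2} and \ref{mainthm3} to handle those, falling back on Theorem \ref{mainthm1} for every weight system with $a \ge I_X$. Concretely, for $I_X = 2$ the surviving ambient spaces are $\mbP^{n+1}$ (where $a=1$) and those with $a \ge 2$; for the latter $\max\{a_i\} = a \ge 2 = I_X$, so Theorem \ref{mainthm1} disposes of them, and I am left to analyse $X_d \subset \mbP^{n+1}$ with $d = n+1-I_X+1 = n-I_X+2$. Here $e = d$ (since $a=1$), and checking Theorems \ref{mainthm2} and \ref{mainthm3} against the inequalities $d \ge \tfrac{p}{p+1}a_{\Sigma}$ and $d \ge a_{\Pi}+\tfrac{2}{3}a_{\Sigma}$ produces exactly the short exceptional lists, the excluded hypersurfaces being precisely those low-degree cases where neither numerical bound is met.

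The main obstacle will be the bookkeeping in the pure projective case $a=1$: there one must verify, for each small degree, whether the prime $p$ arising from $e=d$ makes $\tfrac{p}{p+1}a_{\Sigma}$ small enough for Theorem \ref{mainthm2}, or whether oddness of $e$ lets Theorem \ref{mainthm3} intervene, and then confirm that the hypersurfaces \emph{not} covered by any inequality are exactly $X_3\subset\mbP^4$, $X_5\subset\mbP^6$ for $I_X=2$ and $X_2\subset\mbP^4$, $X_3\subset\mbP^5$, $X_4\subset\mbP^6$, $X_5\subset\mbP^7$, $X_7\subset\mbP^9$ for $I_X=3$. This is a finite but delicate verification, since one must rule out that some weighted system with $a\ge 2$ sneaks past Theorem \ref{mainthm1}; the coprimality and well-formedness constraints are what guarantee no such system exists, and confirming their exhaustiveness is the crux of the argument.
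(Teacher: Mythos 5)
Your reduction works for parts (1) and (2), but it contains one false structural claim and, more seriously, a missing case that makes part (3) unprovable by your method. First, the claim that pairwise coprimality of the weights forces at most one weight to exceed $1$ is simply wrong: $(1,1,1,2,3)$ is pairwise coprime, and indeed $X_6 \subset \mbP(1^3,2,3)$ appears in the paper's introduction. What you actually need (and what is true) is that in the residual cases where Theorem \ref{mainthm1} does not apply one has $a_{\max} < I_X$, so for $I_X \le 3$ every weight lies in $\{1,2\}$ and coprimality then leaves only $\mbP^{n+1}$ and $\mbP(1^{n+1},2)$ as residual ambient spaces. Phrased your way, the enumeration rests on a false lemma, even though for $I_X = 2$ it happens to output the correct residual list.

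The fatal gap is in part (3): you never treat the residual family $X_{2m} \subset \mbP(1^{2m+1},2)$, $m \ge 2$, which has $d = 2m$, $a_{\Sigma} = 2m+3$ and index $3$. Here $a_{\max} = 2 < 3 = I_X$, so Theorem \ref{mainthm1} does \emph{not} dispose of it, contrary to your closing assertion that no weighted system with $a \ge 2$ sneaks past Theorem \ref{mainthm1}. Checking the other theorems: with $e = m$ and $p$ the smallest prime factor of $m$, the inequality of Theorem \ref{mainthm2} reads $2m \ge \tfrac{p}{p+1}(2m+3)$, i.e.\ $2m \ge 3p$, which fails exactly for $m = 2$ and for $m$ an odd prime with $m < 6$; Theorem \ref{mainthm3} requires $m$ odd and $2m \ge 2 + \tfrac{2}{3}(2m+3)$, i.e.\ odd $m \ge 7$. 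Hence $X_4 \subset \mbP(1^5,2)$, $X_6 \subset \mbP(1^7,2)$ and $X_{10} \subset \mbP(1^{11},2)$ are covered by none of Theorems \ref{mainthm1}, \ref{mainthm2}, \ref{mainthm3}, yet they are not on the corollary's exception list. This is precisely why the paper proves the supplemental Theorem \ref{thmapp} in Section \ref{sec:supple}, which (via Remark \ref{rem:ex}) handles $m = 3, 5$, and invokes \cite{HPT} for $m = 2$; without these extra inputs your argument cannot establish part (3). For the pure projective residual cases your bookkeeping does succeed: with $d = n$ (resp.\ $d = n-1$) the inequalities of Theorems \ref{mainthm2}/\ref{mainthm3} fail exactly for $n \in \{3,5\}$ (resp.\ $d \in \{2,3,4,5,7\}$), matching the stated exceptions; the paper instead quotes Totaro's theorem directly for these cases, with the same outcome.
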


This implies that we can take $N_1 = 3$, $N_2 = 6$ and $N_3 = 9$ although $N_2, N_3$ may not be optimal.
In the above exceptions, $X_2 \subset \mbP^4$ is clearly rational and $X_3 \subset \mbP^4$ is not rational by \cite{CG} while its stable rationality is unknown.
Neither rationality nor stable rationality is determined for $X_5 \subset \mbP^6$ and $X_d \subset \mbP^{d+2}$ for $d = 3,4,5,7$.

We explain the content of the paper.
In Section \ref{sec:prelim}, we recall the specialization arguments of universal $\CH_0$-triviality and the Koll\'ar's construction of global differential forms on inseparable covering spaces.
In Section \ref{sec:smwhyp}, we study weighted hypersurfaces in arbitrary characteristic with an emphasis on singularities and on the restriction maps of global sections of sheaves.
Sections \ref{sec:pfthm2}, \ref{sec:pfthm3} and \ref{sec:pfthm1} are devoted to the proof of Theorems \ref{mainthm2}, \ref{mainthm3} and \ref{mainthm1}, respectively.
Theorems \ref{mainthm2} and \ref{mainthm3} can be thought of as direct generalizations of Totaro's result on hypersurfaces, while in the proof of Theorem \ref{mainthm1} we need to consider a mixed characteristic degeneration different from Totaro's.
In Section \ref{sec:supple}, we give a supplemental result on the failure of stable rationality of smooth weighted hypersurfaces and in Section \ref{sec:pfcor} we prove corollaries. 

\begin{Ack}
The author is partially supported by JSPS KAKENHI Grant Number 26800019.
\end{Ack}

\section{Preliminaries} \label{sec:prelim}

We briefly recall fundamental results which will be necessary in the proof of stable non-rationality of varieties via the reduction modulo $p$ arguments.

\subsection{Specialization of universal $\CH_0$-triviality}

For a variety $X$, we denote by $\CH_0 (X)$ the Chow group of $0$-cycles on $X$, which is by definition the free abelian group of $0$-cycles modulo rational equivalence.

\begin{Def}
\begin{enumerate}
\item A projective variety $X$ defined over a field $k$ is {\it universally $\CH_0$-trivial} if, for any field extension $F \supset k$, the degree map $\deg \colon \CH_0 (X_F) \to \mbZ$ is an isomorphism.
\item A projective morphism $\varphi \colon Y \to X$ defined over a field $k$ is {\it universally $\CH_0$-trivial} if, for any field extension $F \supset k$, the pushforward map $\varphi_* \colon \CH_0 (Y_F) \to \CH_0 (X_F)$ is an isomorphism.
\end{enumerate}
\end{Def}

Universal $\CH_0$-triviality is an obstruction for stable rationality.

\begin{Lem}
If $X$ is a smooth, projective, stably rational variety, then $X$ is universally $\CH_0$-trivial.
\end{Lem}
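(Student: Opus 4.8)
The plan is to deduce universal $\CH_0$-triviality of $X$ from two standard facts: that projective space is universally $\CH_0$-trivial, and that universal $\CH_0$-triviality is both a birational invariant of smooth projective varieties and insensitive to multiplying by a projective space. Indeed, stable rationality of $X$ means that $X \times \mbP^m$ is rational, i.e.\ birational to some projective space $\mbP^N$, for some $m \ge 0$, and $X \times \mbP^m$ is again smooth and projective. If I can show that $\mbP^N$ is universally $\CH_0$-trivial, that this property transfers across the birational equivalence $X \times \mbP^m \dashrightarrow \mbP^N$, and that it then descends from $X \times \mbP^m$ to $X$, I am done. The first point is classical: for every field extension $F$ one has $\CH_0(\mbP^N_F) \cong \mbZ$ with the degree map an isomorphism.

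Next I would establish the birational invariance. Given two smooth projective (geometrically integral) varieties $Y, Y'$ over $k$ with a birational map $Y \dashrightarrow Y'$, I would resolve the closure of its graph to obtain a smooth projective variety $Z$ together with birational morphisms $Z \to Y$ and $Z \to Y'$, and then use that a proper birational morphism of smooth projective varieties induces an isomorphism on $\CH_0$. Base-changing the whole picture to an arbitrary extension $F \supset k$ (a birational map remains birational after base change) yields compatible isomorphisms $\CH_0(Y_F) \cong \CH_0(Z_F) \cong \CH_0(Y'_F)$ respecting the degree maps, so $Y$ is universally $\CH_0$-trivial precisely when $Y'$ is. Applying this to $X \times \mbP^m \dashrightarrow \mbP^N$ shows that $X \times \mbP^m$ is universally $\CH_0$-trivial. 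I expect this birational-invariance step to be the main obstacle, since it leans on resolution of singularities (or weak factorization) and on the invariance of $\CH_0$ under blow-ups, and one must keep everything uniform over all field extensions $F$.

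Finally I would descend from the product to $X$ via the projection $\pi \colon X \times \mbP^m \to X$. The projective bundle formula for Chow groups, restricted to zero-cycles, shows that $\pi_* \colon \CH_0((X \times \mbP^m)_F) \to \CH_0(X_F)$ is an isomorphism for every $F$, with inverse $\alpha \mapsto \alpha \times \{\mathrm{pt}\}$ for an $F$-rational point of $\mbP^m$; moreover $\pi_*$ preserves degrees of zero-cycles. Hence the degree map on $\CH_0(X_F)$ is an isomorphism if and only if the degree map on $\CH_0((X \times \mbP^m)_F)$ is, and the latter holds for all $F$ by the previous step. Therefore $X$ is universally $\CH_0$-trivial, as claimed.
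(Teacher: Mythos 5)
The paper never proves this lemma: it is quoted as a standard fact (the stable birational invariance of $\CH_0$ for smooth projective varieties, as used in \cite{CTP} and \cite{Totaro}), so there is no in-paper argument to compare yours against, and I can only judge your proposal on its own terms. It is correct, and it is the standard argument: $\deg \colon \CH_0 (\mbP^N_F) \to \mbZ$ is an isomorphism for every $F$; your projective bundle computation is right (the only surviving summand in $\CH_0 (X \times \mbP^m)$ is $\CH_0(X) \cdot h^m$, since the others land in $\CH_{<0}(X) = 0$), and $\pi_*$ is degree-preserving, which gives the descent from $X \times \mbP^m$ to $X$; and universal $\CH_0$-triviality passes across birational maps of smooth projective geometrically integral varieties, a point where your remark that birationality and smoothness persist under base change of the field (using geometric integrality, which stably rational varieties do have) is exactly what keeps the argument uniform in $F$. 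The one caveat is the step you yourself flag as the main obstacle: resolving the graph and invoking weak factorization or invariance of $\CH_0$ under smooth blow-ups confines the proof to characteristic zero. That is harmless for this paper, which only ever applies the lemma to varieties over $\mbC$ (the characteristic $p$ work uses the differential-form obstruction instead), but the lemma as stated carries no hypothesis on the field; to get it in full generality one should replace your resolution step by the classical correspondence-theoretic proof of birational invariance of $\CH_0$ (Fulton, \emph{Intersection Theory}, Example 16.1.11): the closure of the graph of the birational map and its transpose act on $\CH_0$, and their composites differ from the identity by correspondences supported over proper closed subsets, which kill zero-cycles after moving them into the locus where the map is an isomorphism. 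With that substitution your proof works over any field; as written, it is a complete and correct argument in characteristic zero.
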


We apply the following form of specialization result on universal $\CH_0$-triviality.

\begin{Thm}[{\cite[Th\'eor\`eme 1.14]{CTP}}] \label{thm:sp}
Let $A$ be a discrete valuation ring with fraction field $K$ and residue field $k$, with $k$ algebraically closed.
Let $\mcX$ be a flat proper scheme over $A$ with geometrically integral fibers.
Let $X$ be the generic fiber $\mcX \times_A K$ and $Y$ the special fiber $\mcX \times_A k$.
Assume that the geometric generic fiber $X_{\overline{K}}$ is smooth, where $\overline{K}$ is an algebraic closure of $K$, and $Y$ admits a universally $\CH_0$-trivial resolution $\tilde{Y} \to Y$ of singularities.
If $X_{\overline{K}}$ is universally $\CH_0$-trivial, then so is $\tilde{Y}$.
\end{Thm}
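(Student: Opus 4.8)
The plan is to deduce the universal $\CH_0$-triviality of $\tilde{Y}$ from that of $X_{\overline{K}}$ by means of a degree-preserving specialization homomorphism on Chow groups of $0$-cycles, built from Fulton's intersection with the special fibre. Since $\pi \colon \tilde{Y} \to Y$ is universally $\CH_0$-trivial, the pushforward $\pi_* \colon \CH_0 (\tilde{Y}_F) \to \CH_0 (Y_F)$ is a degree-compatible isomorphism for every field extension $F \supseteq k$; hence it suffices to prove that the degree map $\CH_0 (Y_F) \to \mbZ$ is an isomorphism for all $F \supseteq k$.

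First I would reduce to the case where the generic fibre $X$ itself is smooth and universally $\CH_0$-trivial over $K$. Universal $\CH_0$-triviality of the smooth proper variety $X_{\overline{K}}$ is equivalent to the existence of an integral decomposition of the diagonal, and such a decomposition involves only finitely many cycles and rational equivalences, so it is already defined over a subextension of $\overline{K}/K$ that is finitely generated, hence finite. Replacing $A$ by the localization of its integral closure in this finite extension $K'$ at a prime over the maximal ideal, I obtain a DVR whose residue field is again $k$ (as $k$ is algebraically closed) and whose special fibre is still $Y$; over it the generic fibre becomes smooth and universally $\CH_0$-trivial, while the resolution $\tilde{Y} \to Y$ is unchanged. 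So I may assume $X$ is universally $\CH_0$-trivial over $K$.

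Next, for a fixed $F \supseteq k$ I would construct a DVR $B$ dominating $A$ with residue field $F$ and fraction field $M \supseteq K$; this is standard (for finitely generated $F/k$ one localizes a polynomial ring realizing $F$ as a residue field, and the general case follows by passing to a colimit). Then $\mcX_B := \mcX \times_A B$ is proper and flat over $B$ with integral generic fibre $X_M$ and integral special fibre $Y_F$, and intersecting the closure of a $0$-cycle with the Cartier divisor $Y_F = \operatorname{div}(t)$ cut out by a uniformizer $t$ of $B$ yields a degree-preserving specialization homomorphism $sp \colon \CH_0 (X_M) \to \CH_0 (Y_F)$; it is well defined because the normal bundle of $Y_F$ in $\mcX_B$ is trivial. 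Since $X$ is universally $\CH_0$-trivial and $M \supseteq K$, the group $\CH_0 (X_M)$ is infinite cyclic, generated by a class of degree $1$; so once $sp$ is known to be surjective it follows immediately that $\CH_0 (Y_F)$ is infinite cyclic, generated by a degree-$1$ class, i.e. that $\CH_0 (Y_F) \to \mbZ$ is an isomorphism, completing the proof.

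The hard part will be the surjectivity of $sp$. The idea is that every closed point $y$ of the integral special fibre $Y_F$ should be lifted: one produces an integral curve $C \subseteq \mcX_B$ through $y$ dominating $\Spec B$, by cutting $\mcX_B$ down with general hypersurface sections through $y$ that do not contain $Y_F$, so that $C$ is finite over $B$, its generic fibre is a $0$-cycle on $X_M$, and $sp$ of that cycle is the intersection cycle $C \cap Y_F$. The delicate point is to control the extra intersection points away from $y$ and to reduce to points lying in the smooth locus of $Y_F$, which uses the integrality of the fibres together with a moving argument for $0$-cycles; this is exactly where the geometric-integrality hypothesis and the properness of $\mcX$ over $A$ enter. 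I would isolate this as a separate lemma on the surjectivity of Fulton's specialization map and treat the remainder of the argument as formal consequences.
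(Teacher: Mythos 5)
The paper offers no proof of this statement at all: it is quoted, with attribution, from \cite{CTP} (Th\'eor\`eme 1.14), so the only meaningful comparison is with the argument of Colliot-Th\'el\`ene and Pirutka. Your overall skeleton does match theirs: transferring the problem from $\tilde{Y}$ to $Y$ via the universally $\CH_0$-trivial resolution; descending an integral decomposition of the diagonal of $X_{\overline{K}}$ to a finite extension $K'$ of $K$ and replacing $A$ by a DVR of $K'$ dominating it (the residue field stays $k$ since $k$ is algebraically closed); producing a DVR $B$ dominating $A$ with residue field $F$; and concluding from degree-compatibility of Fulton's specialization map $sp \colon \CH_0 (X_M) \to \CH_0 (Y_F)$ once $sp$ is surjective. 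All of these reductions are sound.

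The genuine gap is exactly the step you defer, and the mechanism you sketch for it would fail. First, cutting down by ``general hypersurface sections through $y$'' presupposes an ample line bundle on $\mcX_B$, whereas the theorem assumes $\mcX$ only proper over $A$, not projective. Second, and more seriously, an integral curve $C \subset \mcX_B$ through $y$ that is finite over $B$ meets the special fiber in $y$ \emph{plus} other points with uncontrolled multiplicities: the specialization of its generic fiber is $[y] + E$ for some effective cycle $E$ about which you know nothing, and to extract $[y]$ from this you would already need to know that $E$ lies in the image of $sp$ --- the argument is circular. The missing idea, which is the heart of the cited proof, is Hensel's lemma: one may replace $B$ by its completion (this changes neither $F$ nor the fact that $\CH_0 (X_{M}) \cong \mbZ$, since universal $\CH_0$-triviality persists under enlarging the fraction field), and then every closed point $y$ of $Y_F$ at which $\mcX_B \to \Spec B$ is smooth lifts to a closed point of the generic fiber whose specialization is \emph{exactly} $[y]$, with no extra terms, because the corresponding section meets the special fiber only at $y$ with multiplicity one. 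One then reduces an arbitrary zero-cycle on $Y_F$ to one supported in the smooth locus --- either by a moving lemma on the integral variety $Y_F$ (moving along normalizations of curves), or by writing the class as $\pi_{F*}$ of a class on the smooth variety $\tilde{Y}_F$ and moving it there off $\pi_F^{-1} (\Sing Y_F)$, which is a second, essential use of the universally $\CH_0$-trivial resolution. Without the Henselian lifting and this reduction to the smooth locus, the surjectivity of $sp$, and hence the theorem, is not established.
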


Failure of universal $\CH_0$-triviality can be concluded by the existence of a global differential form.

\begin{Lem}[{\cite[Lemma 2.2]{Totaro}}] \label{lem:Totaro}
Let $X$ be a smooth projective variety over a filed.
If $H^0 (X,\Omega_X^i) \ne 0$ for some $i > 0$, then $X$ is not stably rational.
\end{Lem}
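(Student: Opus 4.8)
The plan is to prove the contrapositive: if $X$ is stably rational, then $H^0(X,\Omega_X^i) = 0$ for every $i > 0$. By definition, stable rationality of $X$ means that $X \times \mbP^m$ is rational for some $m \ge 0$, that is, $X \times \mbP^m$ is birational to $\mbP^{n+m}$ where $n = \dim X$. Since $X$ is smooth projective, so is $X \times \mbP^m$, and the strategy is to transport the vanishing of global forms on $\mbP^{n+m}$ across this birational map and then to descend the conclusion from the product to $X$.

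First I would record that global $i$-forms are a birational invariant among smooth projective varieties. Given a birational map $\phi \colon V \ratmap W$ of smooth projective varieties, $\phi$ is defined on an open subset $U \subset V$ whose complement has codimension $\ge 2$. Pulling back forms yields a map $\phi^* \colon H^0(W,\Omega_W^i) \to H^0(U,\Omega_U^i)$; because $\Omega_V^i$ is locally free, hence reflexive, and $\codim(V \setminus U) \ge 2$, the restriction $H^0(V,\Omega_V^i) \to H^0(U,\Omega_U^i)$ is an isomorphism (a Hartogs-type extension), so $\phi^*$ determines a map $H^0(W,\Omega_W^i) \to H^0(V,\Omega_V^i)$. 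Applying the same construction to $\phi^{-1}$ produces an inverse, whence the two spaces are isomorphic. This argument is characteristic-free, which is essential since the lemma will be applied over residue fields of positive characteristic.

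Applying this to the rational variety $X \times \mbP^m$ and using that $H^0(\mbP^{n+m},\Omega^i) = 0$ for $i > 0$, I obtain $H^0(X \times \mbP^m, \Omega_{X \times \mbP^m}^i) = 0$ for all $i > 0$. It remains to descend to $X$. Writing $\mathrm{pr}_1, \mathrm{pr}_2$ for the two projections, the decomposition of the sheaf of forms on a product gives $\Omega_{X \times \mbP^m}^i \cong \bigoplus_{p+q=i} \mathrm{pr}_1^* \Omega_X^p \otimes \mathrm{pr}_2^* \Omega_{\mbP^m}^q$, and taking global sections together with the Künneth formula yields $H^0(X \times \mbP^m, \Omega^i) \cong \bigoplus_{p+q=i} H^0(X,\Omega_X^p) \otimes_k H^0(\mbP^m,\Omega_{\mbP^m}^q)$. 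Since $H^0(\mbP^m,\Omega_{\mbP^m}^q) = 0$ for $q \ge 1$ and equals $k$ for $q = 0$ in any characteristic, only the term $p = i$, $q = 0$ survives, giving $H^0(X \times \mbP^m, \Omega^i) \cong H^0(X,\Omega_X^i)$. Hence $H^0(X,\Omega_X^i) = 0$ for all $i > 0$, which is the contrapositive of the assertion.

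The only genuinely delicate point is the birational invariance statement in positive characteristic: one must check that the pullback of forms is well defined on the locus where $\phi$ is a morphism and that the reflexivity/Hartogs extension argument is used in place of any appeal to resolution of singularities, so that no hypothesis on the characteristic of the base field is required.
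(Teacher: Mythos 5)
Your proof is correct: the paper itself gives no argument for this lemma, quoting it directly from \cite{Totaro}, and your contrapositive argument --- birational invariance of $H^0(\,\cdot\,,\Omega^i)$ for smooth projective varieties in arbitrary characteristic (via the codimension-$\ge 2$ indeterminacy locus and Hartogs-type extension of sections of the locally free sheaf $\Omega^i$), followed by the K\"unneth decomposition of $\Omega^i_{X \times \mbP^m}$ and the vanishing $H^0(\mbP^N,\Omega^i)=0$ for $i>0$ --- is exactly the proof of Lemma 2.2 in Totaro's paper. The only step you gloss over slightly is why the two extension maps induced by $\phi$ and $\phi^{-1}$ are mutually inverse; this follows since $(\phi^{-1})^*\phi^*\omega$ agrees with $\omega$ on a dense open subset over which $\phi$ restricts to an isomorphism, hence everywhere by integrality, so the gap is purely expository.
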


\subsection{Inseparable covers and global differential forms}

Let $Z$ be a smooth variety defined over an algebraically closed field $\K$ of characteristic $p > 0$, $\mcL$ an invertible sheaf on $Z$, $m$ a positive integer divisible by $p$ and $s \in H^0 (Z, \mcL^m)$.

Let $U = \Spec (\oplus_{i \ge 0} \mcL^{-i})$ be the total space of the line bundle $\mcL$ and $\pi_U \colon U \to Z$ the natural morphism.
We denote by $y \in H^0 (U, \pi_U^* \mcL)$ the zero section and define
\[
Z [\sqrt[m]{s}] = (y^m - \pi_U^* s = 0) \subset U.
\]
Set $X = Z [\sqrt[m]{s}]$ and $\pi = \pi_U|_X \colon X \to Z$.
We call $X$ or $\pi \colon X \to Z$ the {\it covering of $Z$ obtained by taking the $m$th roots of $s$}.

The singularities of $X$ can be analyzed by critical points of the section $s$.
Let $\msq \in Z$ be a point and $x_1,\dots,x_n$ local coordinates of $Z$ at $\msq$.
Around $\msq$, we can write $s = f (x_1,\dots,x_n) \tau^m$, where $f \in \mcO_{Z,\msq}$ and $\tau$ is a local generator of $\mcL$ at $\msq$.
We write $f = \alpha + \ell + q + g$, where $\alpha \in \K$, $\ell, q$ are linear, quadratic forms in $x_1,\dots,x_n$, respectively, and $g = g (x_1,\dots,x_n) \in (x_1,\dots,x_n)^3$.

\begin{Def}
We keep the above setting.
We say that $s \in H^0 (Z, \mcL^m)$ has a {\it critical point} at $\msq \in Z$ if $\ell = 0$.

We say that $s \in H^0 (Z, \mcL^m)$ has an {\it admissible critical point} at $\msq \in Z$ if $s$ has a critical point at $\msq$ and the following is satisfied:
\begin{itemize}
\item In case either $p \ne 2$ or $p = 2$ and $n$ is even, $q$ is a nondegenerate quadric.
\item In case $p = 2$, $n$ is odd and $4 \nmid m$, we have 
\[
\operatorname{length} (\mcO_{Z,\msq}/ (\prt f/\prt x_1, \dots, \prt f/\prt x_n)) = 2,
\]
or equivalently $q = \beta x_1^2 + x_2 x_3 + x_4 x_5 + \cdots + x_{n-1} x_n$ and $x_1^3 \in c$ for some $\beta \in \K$ under a suitable choice of local coordinates.
\item In case $p = 2$, $n$ is odd and $4 \mid m$, we have 
\[
\operatorname{length} (\mcO_{Z.\msq}/(\prt f/\prt x_1, \dots, \prt f/\prt x_n)) = 2
\]
and the quadric in $\mbP^{n-1}$ defined by $q = 0$ is smooth, or equivalently, $q = x_1^2 + x_2 x_3 + x_4 x_5 + \cdots + x_{n-1} x_n$ and $x_1^3 \in c$ under a suitable choice of local coordinates.
\end{itemize}
\end{Def}

Note that admissible critical points are isolated.
It is easy to see that $X$ is singular at $\msp \in X$ if and only if $s$ has a critical point at $\pi (\msp)$.
Thus, if the section $s$ has only admissible critical points on $Z$, then the singularity of $X$ are isolated.

\begin{Rem} \label{rem:admcr}
We briefly recall the argument showing that a general section $s \in H^0 (Z, \mcL^m)$ has only admissible critical points on $Z$.
We refer readers' to \cite[Section V.5]{Kollar} for details.
For a point $\msq \in Z$ and an integer $i \ge 2$, we denote by 
\[
\rest^i_{\msq} \colon H^0 (Z, \mcL^m) \to \mcL^m \otimes (\mcO_{Z,\msq}/ \mfm^i_{\msq})
\]
the restriction map, where $\mfm_{\msq} = \mfm_{Z,\msq}$ is the maximal ideal of $\mcO_{Z,\msq}$.
If $\rest^2_{\msq}$ is surjective, then the subset $V^{\crit}_{\msq} \subset H^0 (Z, \mcL^m)$ consisting of the sections admitting a critical point at $\msq$ is a linear subspace of codimension $\dim Z$ in $H^0 (Z, \mcL^m)$.
If $\rest^4_{\msq}$ is surjective, then the subset $V^{\operatorname{na}}_{\msq} \subset H^0 (Z, \mcL^m)$ consisting of the sections admitting a non-admissible critical point at $\msq$ is a proper closed subset of $V^{\crit}_{\msq}$, and hence $V^{\operatorname{na}}_{\msq}$ is of codimension at least $\dim Z + 1$ in $H^0 (Z, \mcL^m)$.
In particular, by counting dimensions, a general section in $H^0 (Z, \mcL^m)$ has only admissible critical point on $Z$ if $\rest^4_{\msq}$ is surjective for any $\msq \in Z$.
\end{Rem}

We can summarize the results of \cite{Kollar}, \cite{CTPcyclic} and \cite{Okcyclic} in the following form.

\begin{Lem}[{\cite[Chapter V.5]{Kollar}, \cite{CTPcyclic}, \cite[Proposition 4.1]{Okcyclic}}] \label{lem:covtech}
Let $X, Z, \mcL, m$ and $s$ be as above.
Assume that $s \in H^0 (Z, \mcL^m)$ has only admissible critical points on $Z$.
Then there exists an invertible subsheaf $\mcM$ of the double dual $(\Omega_X^{n-1})^{\vee \vee}$ of the sheaf $\Omega_X^{n-1}$ and a resolution of singularities $\varphi \colon \tilde{X} \to X$ with the following properties.
\begin{enumerate}
\item $\mcM \cong \pi^* (\omega_Z \otimes \mcL^m)$.
\item $\varphi$ is universally $\CH_0$-trivial and $\varphi^* \mcM \inj \Omega_{\tilde{X}}^{n-1}$
\end{enumerate}
\end{Lem}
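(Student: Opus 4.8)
The plan is to follow Koll\'ar's construction of differential forms on inseparable covers, carefully tracking the twist and then importing the local resolution analysis from the cited references. First I would exploit the hypothesis $p \mid m$. Writing $g = y^m - \pi_U^* s$ for the local defining equation of $X$ in $U$ and trivializing $\mcL$ by $\tau$ so that $s = f \tau^m$ and $y = \eta\tau$, the equation becomes $\eta^m = f$. Since $p \mid m$ we have $d(\eta^m) = m\eta^{m-1}\, d\eta = 0$, so that
\[
df = \sum_{i=1}^n \frac{\partial f}{\partial x_i}\, dx_i = 0 \quad \text{in } \Omega_X^1 ;
\]
equivalently, the conormal generator $dg = -\pi_U^*(ds)$ shows that $\pi^*(ds)$ vanishes in $\Omega_X^1 \otimes \pi^* \mcL^m$. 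On the locus $X^{\circ} = X \setminus \Sing X$, which is exactly where $s$ has no critical point, some $\partial f/\partial x_j$ is a unit, and I would set
\[
\omega = \frac{dx_2 \wedge \cdots \wedge dx_n}{\partial f/\partial x_1}.
\]
Wedging $df = 0$ with $dx_2 \wedge \cdots \widehat{dx_j} \cdots \wedge dx_n$ shows that this expression is independent of the chosen index up to sign, so $\omega$ is a well-defined section of $\Omega_{X^{\circ}}^{n-1}$, and moreover $df \wedge \omega = dx_1 \wedge \cdots \wedge dx_n$.

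Next I would identify the twist. Under a change of trivialization $\tau \mapsto u\tau$ one has $f \mapsto u^{-m} f$, and the correction term $f\,\partial(u^{-m})/\partial x_1 = -m f u^{-m-1}\,\partial u/\partial x_1$ is a multiple of $m$, hence vanishes; thus $\partial f/\partial x_1 \mapsto u^{-m}\,\partial f/\partial x_1$ and $\omega \mapsto u^{m}\omega$, which is precisely the transformation of a local frame of $\pi^*\mcL^m$. Combined with the coordinate-change law forced by $df \wedge \omega = dx_1 \wedge \cdots \wedge dx_n$, which makes $\omega$ transform like the frame $dx_1 \wedge \cdots \wedge dx_n$ of $\pi^*\omega_Z$, this exhibits $\omega$ as a nowhere-vanishing generator of the image of an injection $\pi^*(\omega_Z \otimes \mcL^m) \inj \Omega_{X^{\circ}}^{n-1}$. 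Since admissible critical points are isolated, $\Sing X$ has codimension $n \ge 2$ in $X$; as $(\Omega_X^{n-1})^{\dd}$ is reflexive and $\pi^*(\omega_Z \otimes \mcL^m)$ is invertible, the generator $\omega$ extends across $\Sing X$ by a Hartogs-type argument, yielding the invertible subsheaf $\mcM \cong \pi^*(\omega_Z \otimes \mcL^m) \subset (\Omega_X^{n-1})^{\dd}$ of part (1).

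Finally I would produce the resolution. Here I would use the explicit description of admissible critical points: each yields an isolated singularity whose analytic type is governed by the prescribed quadratic part $q$, and Koll\'ar's local analysis, together with its refinements, supplies an explicit resolution $\varphi \colon \tilde{X} \to X$ by blow-ups supported over $\Sing X$ whose fibres over the singular points are universally $\CH_0$-trivial, so that $\varphi$ itself is universally $\CH_0$-trivial; the same local models show that the rational $(n-1)$-form $\varphi^* \omega$ acquires no pole along the exceptional divisors, giving $\varphi^* \mcM \inj \Omega_{\tilde{X}}^{n-1}$ and hence part (2). The main obstacle is precisely this last local computation: one must simultaneously verify $\CH_0$-triviality of the resolution and regularity of the extended form at each admissible singularity, and the conditions defining admissibility---most delicately in the cases $p = 2$ with $n$ odd---are tailored exactly so that both hold. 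This is the technical core furnished by \cite[Chapter V.5]{Kollar}, \cite{CTPcyclic} and \cite[Proposition 4.1]{Okcyclic}, which I would invoke rather than reprove.
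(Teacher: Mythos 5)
Your proposal is correct and takes essentially the same route as the paper: the paper offers no proof of this lemma at all, simply citing \cite[Chapter V.5]{Kollar}, \cite{CTPcyclic} and \cite[Proposition 4.1]{Okcyclic}, and your sketch of the construction of $\mcM$ (the local form $\omega = dx_2 \wedge \cdots \wedge dx_n / (\prt f/\prt x_1)$, its char-$p$ transformation law exhibiting it as a frame of $\pi^*(\omega_Z \otimes \mcL^m)$, and the reflexive extension across the isolated singular locus) is precisely the construction carried out in those references. Deferring the technical core --- the universally $\CH_0$-trivial resolution at admissible critical points and the regularity of $\varphi^*\omega$ on it --- to the same cited sources is exactly what the paper does.
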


We will refer to $\mcM$ in the above lemma as the {\it invertible subsheaf of $(\Omega_X^{n-1})^{\vee \vee}$ associated to the covering $\pi \colon X \to Z$}. 

\section{Smooth weighted hypersurfaces} \label{sec:smwhyp}

Throughout the present section, we work over an algebraically closed field $\K$ of arbitrary characteristic unless otherwise specified.
We always assume that a weighted projective space $\mbP := \mbP (a_0,\dots,a_{n+1})$ is {\it well formed}, that is, $\gcd \{a_0,\dots,\hat{a_i},\dots,a_{n+1}\} = 1$ for any $i = 0,\dots,n+1$.
Let $x_0,\dots,x_{n+1}$ be the homogeneous coordinates of degree $a_0,\dots,a_{n+1}$, respectively.
When we make explicit the ground field $\K$, we put it as a subscript $\mbP_{\K} (a_0,\dots,a_{n+1})$.
The singular locus of $\mbP$ is a union of {\it singular strata}
\[
\Pi_J = \bigcap_{i \in \{0,\dots,n+1\} \setminus J} (x_i = 0) \subset \mbP
\]
for all subset $J \subset \{0,\dots,n+1\}$ with $\gcd \{\, a_j \mid j \in J \,\} > 1$.

Let $X = X_d$ be a weighted hypersurface in $\mbP := \mbP_{\K} (a_0,\dots,a_{n+1})$ of degree $d$ and let $F (x_0,\dots,x_{n+1}) = 0$ be its defining equation.
We say that $X$ is {\it quasi-smooth} if its affine cone $C_X := (F = 0) \subset \mbA^{n+2}$ is smooth outside the origin.
We say that $X$ is {\it well formed} if $X$ does not contain any singular stratum of codimension $2$ in $\mbP$.
Note that for a quasi-smooth well formed $X$, the adjunction holds:
\[
\omega_X \cong \mcO_X (d - a_{\Sigma}).
\]

\begin{Rem} \label{rem:lincone}
Let $X$ be as above.
We say that $X$ is a {\it linear cone} if its defining equation is linear with respect to some coordinate $x_i$.
In this case $X$ is isomorphic to $\mbP (a_0,\dots,\hat{a_i},\dots,a_{n+1})$.
Clearly a general weighted hypersurface of degree $d$ in $\mbP$ is a linear cone if and only if $d = a_i$ for some $i$.
Note that under the assumption of Theorem \ref{mainthm1}, \ref{mainthm2} or \ref{mainthm3}, it is easy to verify $d > a_i$ for any $i$ so that $X$ cannot be a linear cone.
\end{Rem}

Throughout the present section, we set $\mbP = \mbP_{\K} (a_0,\dots,a_{n+1})$ and 
\[
\begin{split}
U_i &= (x_i \ne 0) \subset \mbP, \ \text{for $i = 0,\dots,n+1$}, \\
U_{i,j} &= (x_i \ne 0) \cap (x_j \ne 0) \subset \mbP, \  \text{for $0 \le i < j \le n+1$}.
\end{split}
\]
We fix notation which will be valid in the rest of the paper:
for positive integers $a_0,\dots,a_{n+1}$ and $d$, 
\[
\begin{split}
a_{\max} &= \max \{a_0,\dots,a_{n+1}\}, \\
a_{\Sigma} &= a_0 + a_1 + \cdots + a_{n+1}, \\
a_{\Pi} &= a_0 a_1 \cdots a_{n+1}, \\
r &= |\{\, i \mid a_i = 1 \,\}| - 1 \in \{-1,0,\dots,n+1\},
\end{split}
\]
and we always assume that $a_0 = a_1 = \cdots = a_r = 1$ and $a_i > 1$ for $i > r$.
Finally we define
\[
\Delta = \bigcap_{a_i = 1} (x_i = 0) = (x_0 = \cdots = x_r = 0) \subset \mbP.
\]

\subsection{Open charts of weighted projective space}

We explain descriptions of open sets $U_i$ and $U_{i,j}$ when $a_i = 1$ and $a_i$ is coprime to $a_j$, respectively.

We consider $U_i$ and assume that $a_i = 1$.
By symmetry, we may assume $i = 0$.
Then we have an isomorphism
\[
U_i \cong \Spec \K [x_1/x_0^{a_1},\dots,x_{n+1}/x_0^{a_{n+1}}].
\]
By setting $\tilde{x}_i = x_i/x_0^{a_i}$, we see that $U_i$ is isomorphic to the affine space $\mbA^{n+1}_{\tilde{x}_1,\dots,\tilde{x}_{n+1}}$ with coordinates $\tilde{x}_1,\dots,\tilde{x}_{n+1}$.
Note that the restriction of the global section $x_i \in H^0 (\mbP, \mcO_{\mbP} (a_i))$ to $U_0$ is $\tilde{x}_i$.

We consider $U_{i,j}$ and assume that $a_i$ is coprime to $a_j$.
By symmetry, we may assume $i = n, j = n+1$.
We take integers $\lambda, \mu$ such that $\lambda a_n - \mu a_{n+1} = 1$ and set $Q = x_n^{\lambda} x_{n+1}^{-\mu}$.
Then we have an isomorphism
\[
U_{i,j} \cong \Spec \K [x_0/ Q^{a_0}, x_1/ Q^{a_1}, \dots, x_{n-1}/Q^{a_{n-1}}, x_n^{a_{n+1}}/x_{n+1}^{a_n}, x_{n+1}^{a_n}/x_n^{a_{n+1}}].
\]
By setting $u = x_{n+1}^{a_n}/x_n^{a_{n+1}}$ and $\tilde{x}_i = x_i/Q^{a_i}$ for $i = 0,\dots,n-1$, we see that $U_{n,n+1}$ is isomorphic to $\mbA^n_{\tilde{x}_0,\dots,\tilde{x}_{n-1}} \times (\mbA^1_u \setminus \{o\})$.
Note that, for restrictions of global sections $x_i$, $0 \le i \le n-1$, $x_n$ and $x_{n+1}$, we have
\[
x_i|_{U_{n,n+1}} = \tilde{x}_i, \ 
x_n|_{U_{n,n+1}} = u^{\mu}, \ 
x_{n+1}|_{U_{n,n+1}} = u^{\lambda}.
\]

\subsection{Restriction maps}

The following elementary result is useful in the study of restriction maps of global sections.

\begin{Lem} \label{lem:easy}
Let $a, b$ and $N$ be positive integers and suppose that $a$ is coprime to $b$ and $N \ge (a-1)(b-1)$.
Then there exist non-negative integers $k$ and $l$ such that $N = k a + l b$.
\end{Lem}

\begin{proof}
If either $a = 1$ or $b = 1$, then the assertion is trivial.
Without loss of generality we may assume $1 < a < b$.
For an integer $i$, we set $N_i = N  - i b$.
Then $N_i \not\equiv N_j \pmod{a}$ for any $0 \le i < j < a$.
Thus there exists $l \in \{0,\dots,a-1\}$ such that $N_l \equiv 0 \pmod{a}$.
By the assumption $N \ge (a-1)(b-1)$, we have $N_0 > \cdots > N_{a-2} > 0$ and $N_{a-1} \ge -a + 1$.
The latter implies that if $l = a - 1$, then $N_l = N_{a-1}$ is non-negative.
It follows that $N_l \ge 0$ in any case and we have $N_l = k a$ for some non-negative integer $k$. 
This shows the existence of $k$ and $l$.
\end{proof}

We study restriction maps in several cases.

\begin{Lem} \label{lem:restP}
\begin{enumerate}
\item Let $i$ be such that $a_i = 1$ and let $c, l$ be positive integers such that $c \ge l a_{\max}$.
Then the restriction map
\[
\rest^{l+1}_{\msp} \colon H^0 (\mbP, \mcO_{\mbP} (c)) \to \mcO_{\mbP} (c) \otimes (\mcO_{\mbP,\msp}/\mfm^{l+1}_{\msp})
\]
is surjective for any point $\msp \in U_i$.
\item Let $i \ne j$ be such that $a_i$ is coprime to $a_j$, and $c$ an integer such that $c > (a_i - 1) (a_j - 1)$.
Then the image of the restriction map
\[
\rest^2_{\msp} \colon H^0 (\mbP, \mcO_{\mbP} (c)) \to \mcO_{\mbP} (c) \otimes (\mcO_{\mbP,\msp}/\mfm_{\msp}^2)
\]
is of dimension at least $r + 1$ for any point $\msp \in U_{i,j} \cap \Delta$.
\end{enumerate}
\end{Lem}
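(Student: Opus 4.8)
My plan for both parts is to pass to the explicit affine description of the relevant chart recalled above and reduce the statement to an elementary count of monomials. For (1), I would first use the symmetry of the coordinates to assume $i = 0$, so that $U_0 \cong \mbA^{n+1}$ with coordinates $\tilde{x}_1, \dots, \tilde{x}_{n+1}$ and $\mcO_{\mbP}(c)$ is trivialized by $x_0^c$. Under this trivialization the degree-$c$ monomial $x_0^{m_0} x_1^{\beta_1} \cdots x_{n+1}^{\beta_{n+1}}$ restricts to $\tilde{x}^{\beta} := \tilde{x}_1^{\beta_1} \cdots \tilde{x}_{n+1}^{\beta_{n+1}}$, and such a global section exists precisely when $\sum_{i \ge 1} a_i \beta_i \le c$. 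The target $\mcO_{\mbP}(c) \otimes (\mcO_{\mbP,\msp}/\mfm_{\msp}^{l+1})$ is then identified with the space of $l$-jets at $\msp$, with basis the classes of the centered monomials $\prod_i (\tilde{x}_i - p_i)^{\gamma_i}$ of total degree $\le l$, where $(p_1, \dots, p_{n+1})$ are the coordinates of $\msp$. The key observation is that every $\tilde{x}^{\beta}$ with $|\beta| := \sum_i \beta_i \le l$ lies in the image of the restriction, since $\sum_{i \ge 1} a_i \beta_i \le a_{\max} |\beta| \le l\, a_{\max} \le c$. Expanding $\tilde{x}^{\beta} = \prod_i ((\tilde{x}_i - p_i) + p_i)^{\beta_i}$, the transition from the monomials $\{\tilde{x}^{\beta}\}_{|\beta| \le l}$ to the centered monomials is triangular with respect to total degree and has unit diagonal, hence is invertible; thus these restrictions already span the jet space and $\rest^{l+1}_{\msp}$ is surjective. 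This triangularity argument avoids dividing by factorials and so remains valid in every characteristic.

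For (2), I would again use symmetry to assume $i = n$, $j = n+1$ and work in the chart $U_{n,n+1} \cong \mbA^n_{\tilde{x}_0,\dots,\tilde{x}_{n-1}} \times (\mbA^1_u \setminus \{o\})$, trivializing $\mcO_{\mbP}(c)$ by $Q^c$, so that $x_k|_{U_{n,n+1}} = \tilde{x}_k$ for $0 \le k \le n-1$ while $x_n|_{U_{n,n+1}} = u^{\mu}$ and $x_{n+1}|_{U_{n,n+1}} = u^{\lambda}$. Since $a_n$ and $a_{n+1}$ are coprime and $c - 1 \ge (a_n - 1)(a_{n+1} - 1)$, Lemma \ref{lem:easy} yields non-negative integers $b, b'$ with $b a_n + b' a_{n+1} = c - 1$. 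For each index $k$ with $0 \le k \le r$ (so that $a_k = 1$ and, since $a_n, a_{n+1} > 1$, also $k \le n-1$), the monomial $s_k = x_k x_n^{b} x_{n+1}^{b'}$ is a global section of $\mcO_{\mbP}(c)$ restricting to $\tilde{x}_k \cdot u^{b\mu + b'\lambda}$. Writing $u = u_0 + v$ with $u_0 = u(\msp) \ne 0$ and using that $\tilde{x}_k(\msp) = 0$ for $k \le r$ because $\msp \in \Delta$, one computes $\rest^2_{\msp}(s_k) \equiv u_0^{\, b\mu + b'\lambda}\, \tilde{x}_k \pmod{\mfm_{\msp}^2}$. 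As $\tilde{x}_0, \dots, \tilde{x}_r$ form part of a regular system of parameters at $\msp$ and the scalars $u_0^{\, b\mu + b'\lambda}$ are nonzero, the classes $\rest^2_{\msp}(s_0), \dots, \rest^2_{\msp}(s_r)$ are linearly independent, giving the asserted lower bound $r + 1$ on the dimension of the image.

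I expect the one genuinely delicate point to be the construction of the sections $s_k$ in part (2): the argument turns the numerical hypothesis $c > (a_i - 1)(a_j - 1)$ into the existence of the exponents $b, b'$ through Lemma \ref{lem:easy}, and it relies on evaluating $u$ at the nonzero value $u_0$ so that the coefficient of $\tilde{x}_k$ stays invertible --- this is precisely where $\msp \in U_{i,j} \cap \Delta$, rather than merely $\msp \in \Delta$, is needed. Part (1) is essentially bookkeeping once one records that total degree $\le l$ forces weighted degree $\le c$; the only care required there is to phrase the spanning step via the triangular change of basis so as not to divide by factorials, keeping everything valid in positive characteristic.
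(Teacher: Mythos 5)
Your proposal is correct and follows essentially the same route as the paper: in both parts you pass to the explicit affine chart, exhibit monomial global sections of weighted degree $c$ whose restrictions realize the required jets, and in (2) you invoke Lemma \ref{lem:easy} and the non-vanishing of $u$ at $\msp \in U_{i,j} \cap \Delta$ exactly as the paper does. The only cosmetic difference is in (1): the paper moves $\msp$ to the origin of the chart by a coordinate change (a graded automorphism $x_j \mapsto x_j + p_j x_0^{a_j}$), whereas you keep $\msp$ general and argue via the unipotent triangular change of basis between monomials and centered monomials; both versions are valid in every characteristic.
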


\begin{proof}
We prove (1).
Let $\msp \in U_i$ be a point.
Replacing coordinates, we may assume $i = 0$ and $\msp = (1\!:\!0\!:\!\cdots\!:\!0)$.
The open set $U_0$ is isomorphic to the affine space $\mbA^{n+1}$ with coordinates $\tilde{x}_1,\dots,\tilde{x}_{n+1}$ and we have $x_j|_{U_0} = \tilde{x}_j$.
For any $1 \le j_1,\dots,j_l \le n+1$ and non-negative integers $m_1,\dots,m_l$ with $0 \le m_1 + \cdots + m_l \le l$, we have 
\[
c - (m_1 a_{j_1} + \cdots + m_l a_{j_l}) \ge c - l a_{\max} \ge 0,
\] 
and the restriction of the monomial 
\[
x_{j_1}^{m_1} x_{j_2}^{m_2} \cdots x_{j_l}^{m_l} x_0^{c-(m_1 a_{j_1} + \cdots + m_l a_{j_l})} \in H^0 (\mbP, \mcO_{\mbP} (c))
\]
to $U_0$ is
\[
\tilde{x}_{j_1}^{m_1} \tilde{x}_{j_2}^{m_2} \cdots \tilde{x}_{j_l}^{m_l}.
\]
This immediately shows that $\rest^{l+1}_{\msp}$ is surjective.

We prove (2).
We may assume $(i,j) = (n, n+1)$.
Then the open set $U_{n,n+1}$ is isomorphic to $\mbA^n_{\tilde{x}_0,\dots,\tilde{x}_{n-1}} \times (\mbA_u^1 \setminus \{o\})$.
Since $c - 1 \ge (a_n - 1)(a_{n+1} - 1)$, we can take non-negative integers $\nu_n,\nu_{n+1}$ such that $c - 1 = \nu_n a_n + \nu_{n+1} a_{n+1}$ by Lemma \ref{lem:easy}.
By setting $M = x_n^{\nu_n} x_{n+1}^{\nu_{n+1}}$, we have monomials 
\[
x_0 M, x_1 M, \dots, x_r M \in H^0 (\mbP, \mcO_{\mbP} (c)),
\]
and they restrict to
\[
\tilde{x}_0 u^m, \tilde{x}_1 u^m, \cdots, \tilde{x}_r u^m,
\]
where $m$ is a suitable integer.
Since $\msp \in U_{n,n+1} \cap \Delta$, the coordinates $\tilde{x}_0,\dots,\tilde{x}_r$ can be chosen as a part of local coordinates of $\mbP$ at $\msp$ (without taking a translation) and the coordinate $u$ does not vanish at $\msp$.
Thus the section $x_i M \in H^0 (\mbP, \mcO_{\mbP} (c))$ is mapped to $\tilde{x}_i \in \mcO_{\mbP} (c) \otimes (\mcO_{\mbP, \msp} / \mfm_{\msp}^2)$ and the image of $\rest^2_{\msp}$ is of dimension at least $r+1$.
\end{proof}

\begin{Lem} \label{lem:restP2}
Suppose that $a_0,\dots,a_{n+1}$ are mutually coprime to each other and let $c$ be positive integer divisible by $a_{\Pi}$.
Then the restriction map
\[
\rest_{\msp}^2 \colon H^0 (\mbP, \mcO_{\mbP} (c)) \to \mcO_{\mbP} (c) \otimes (\mcO_{\mbP,\msp}/\mfm_{\msp}^2)
\]
is surjective for any smooth point $\msp \in \mbP$.
\end{Lem}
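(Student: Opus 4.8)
The plan is to reduce to an explicit computation in a single affine chart and to produce, for each cotangent direction at $\msp$, a global section of $\mcO_{\mbP}(c)$ realizing it. Since $\dim \mbP = n+1$, the target $\mcO_{\mbP}(c) \otimes (\mcO_{\mbP,\msp}/\mfm_{\msp}^2)$ is a $\K$-vector space of dimension $n+2$, so it suffices to exhibit sections whose restrictions contain a unit (the value at $\msp$) together with a basis of the cotangent space $\mfm_{\msp}/\mfm_{\msp}^2$. I would first split into two cases according to whether $\msp$ lies on one of the weight-one coordinate charts. If $x_i(\msp) \ne 0$ for some $i$ with $a_i = 1$, then $\msp \in U_i$ and, since $c \ge a_{\Pi} \ge a_{\max}$, surjectivity is immediate from Lemma \ref{lem:restP}(1) with $l = 1$. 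The whole content is therefore the remaining case $\msp \in \Delta$, where every weight-one coordinate vanishes at $\msp$.

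In this case I would first observe that smoothness forces at least two coordinates of $\msp$ to be nonzero: a point of $\Delta$ with a single nonzero coordinate is a coordinate point with weight $> 1$, which is a cyclic quotient singularity of $\mbP$. Choosing two such indices and relabelling them $n, n+1$ by symmetry, their weights are coprime, so $\msp \in U_{n,n+1} \cap \Delta$. Using the chart $U_{n,n+1} \cong \mbA^n_{\tilde{x}_0,\dots,\tilde{x}_{n-1}} \times (\mbA^1_u \setminus \{o\})$ and the restriction formulas recorded above, a degree-$c$ monomial $x_0^{b_0}\cdots x_{n+1}^{b_{n+1}}$ restricts to $\tilde{x}_0^{b_0}\cdots \tilde{x}_{n-1}^{b_{n-1}} u^{m}$ with $m = \mu b_n + \lambda b_{n+1}$, while the local coordinates at $\msp$ are $\tilde{x}_0,\dots,\tilde{x}_r$ (which vanish at $\msp$ since $\msp \in \Delta$), the shifted coordinates $\tilde{x}_k - \tilde{x}_k(\msp)$ for $r < k \le n-1$, and $u - u(\msp)$. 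For each $k$ with $0 \le k \le n-1$ I would take the section $x_k\, x_n^{b_n} x_{n+1}^{b_{n+1}}$ of degree $c$, which exists because $c - a_k \ge (a_n-1)(a_{n+1}-1)$ (a consequence of $c \ge a_{\Pi} \ge a_k a_n a_{n+1}$) by Lemma \ref{lem:easy}; its linear part at $\msp$ has a nonzero $d\tilde{x}_k$-coefficient and involves no other $\tilde{x}$-direction, so these sections account for all $n$ coordinate directions $\tilde{x}_0,\dots,\tilde{x}_{n-1}$ once the value and the $u$-direction are under control. For $k \le r$ this is exactly the construction behind Lemma \ref{lem:restP}(2).

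The main obstacle, and the only place where $a_{\Pi} \mid c$ is used in an essential way, is the $u$-direction, because in positive characteristic the obvious sections $x_n^{c/a_n}, x_{n+1}^{c/a_{n+1}}$ restrict to powers $u^m$ whose differential $m u^{m-1}\,du$ may vanish modulo $p$. To get around this I would use the full family of monomials $x_n^{b_n} x_{n+1}^{b_{n+1}}$ of degree $c$ in the two variables alone: since $a_n a_{n+1} \mid c$ there are at least two such monomials, and passing from one solution to the next via $(b_n,b_{n+1}) \mapsto (b_n + a_{n+1},\, b_{n+1} - a_n)$ changes the exponent $m$ by exactly $\mu a_{n+1} - \lambda a_n = -1$. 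Hence two adjacent monomials restrict to $u^{m}$ and $u^{m-1}$, whose classes span the $\langle 1,\, u - u(\msp)\rangle$-plane regardless of the characteristic, the relevant $2 \times 2$ determinant being $-u(\msp)^{2m-2} \ne 0$. With the value and the $u$-direction thus secured, a routine linear-algebra cleanup subtracts these off the sections $x_k x_n^{b_n}x_{n+1}^{b_{n+1}}$ to isolate each $\tilde{x}_k$-direction, and the resulting images together span the whole $(n+2)$-dimensional target, which proves surjectivity.
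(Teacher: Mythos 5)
Your proof is correct and follows essentially the same route as the paper's: reduce to the charts $U_i$ (weight one, via Lemma \ref{lem:restP}.(1)) and $U_{n,n+1}$, use Lemma \ref{lem:easy} to build sections $x_k x_n^{b_n}x_{n+1}^{b_{n+1}}$ realizing the $\tilde{x}_k$-directions, and handle the value and the $u$-direction with two pure monomials in $x_n, x_{n+1}$ whose restricted exponents differ by $1$ — exactly the paper's family $x_n^{m a_{n+1}}, x_n^{(m-1)a_{n+1}}x_{n+1}^{a_n}, \dots$, whose characteristic-independent spanning is the same determinant observation you make explicit. The only cosmetic difference is that you organize the case split by membership in $\Delta$ rather than by listing the charts covering the smooth locus, which amounts to the same thing.
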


\begin{proof}
By the assumption, the singular locus of $\mbP$ is the set $\{\, \msp_i \mid a_i > 1 \,\}$, hence the smooth locus of $\mbP$ is covered by the $U_i$ for $i$ such that $a_i = 1$ and the $U_{i,j}$ for $i \ne j$ with $a_i, a_j > 1$.
If $\msp \in U_i$ with $a_i = 1$, then $\rest_{\msp}^2$ is surjective by Lemma \ref{lem:restP}.(1) since $c \ge a_{\Pi} \ge a_{\max}$.

Suppose that $\msp \in U_{i,j}$ with $i \ne j$ and $a_i, a_j > 1$.
Without loss of generality, we may assume $(i,j) = (n,n+1)$, i.e.\ $\msp \in U_{n,n+1}$.
As in the proof of Lemma \ref{lem:restP}, $U_{n,n+1}$ is isomorphic to $\mbA^n_{\tilde{x}_0,\dots,\tilde{x}_{n-1}} \times (\mbA_u^1 \setminus \{o\})$, where $x_j|_{U_{n,n+1}} = \tilde{x}_j$ for $j = 0,\dots,n-1$.
Let $\lambda,\mu$ be positive integers such that $\lambda a_n - \mu a_{n+1} = 1$ and set $Q = x_{n}^{\lambda} x_{n+1}^{-\mu}$.
Then we have $u = x_{n+1}^{a_n}/x_n^{a_{n+1}}$.
For each $0 \le j \le n-1$, we have 
\[
c - a_j \ge a_j a_n a_{n+1} - a_j = a_j (a_n a_{n+1} - 1) \ge a_n a_{n+1} - 1 \ge (a_n-1)(a_{n+1} - 1)
\]
and thus there exists a monomial $M_j = x_n^{\lambda_j} x_{n+1}^{\mu_j}$ of degree $c - a_j$ by Lemma \ref{lem:easy}.
Hence we have global sections 
\[
x_0 M_0, x_1 M_1, \dots, x_{n-1} M_{n-1} \in H^0 (\mbP, \mcO_{\mbP} (c))
\]
which restrict to
\[
\tilde{x}_0 u^{m_0}, \tilde{x}_1 u^{m_1}, \dots, \tilde{x}_{n-1} u^{m_{n-1}}
\]
on $U_{n,n+1}$.
Now we write $c = m a_n a_{n+1}$, where $m \ge 1$.
Then the sections
\[
x_n^{m a_{n+1}}, x_n^{(m-1) a_{n+1}} x_{n+1}^{a_n}, \dots, x_{n+1}^{m a_n} \in H^0 (\mbP, \mcO_{\mbP} (c))
\]
restricts to
\[
u^{\mu m a_{n+1}}, u^{\mu m a_{n+1} + 1}, \dots, u^{\mu m a_{n+1} + m} = u^{\lambda m a_n}.
\]
We see that the image of the sections
\[
x_0 M_0,\dots,x_{n-1} M_{n-1}, x_n^{m a_{n+1}}, x_n^{(m-1) a_{n+1}} x_{n+1}^{a_n} \in H^0 (\mbP, \mcO_{\mbP} (c))
\]
generates the $\K$-vector space $\mcO_{\mbP} (c) \otimes (\mcO_{\mbP, \msp}/\mfm_{\msp}^2)$.
This completes the proof.
\end{proof}

\begin{Rem} \label{rem:restZ}
Let $Z$ be an irreducible subvariety of $\mbP$ and let $\msp \in Z$ be a point such that both $Z$ and $\mbP$ are smooth at $\msp$.
Then the surjectivity of the restriction map
\[
\rest^{l+1}_{\msp} \colon H^0 (\mbP, \mcO_{\mbP} (c)) \to \mcO_{\mbP} (c) \otimes (\mcO_{\mbP,\msp}/\mfm_{\msp}^{l+1})
\]
implies the surjectivity of the restriction map
\[
\rest^{l+1}_{Z,\msp} \colon H^0 (Z, \mcO_Z (c)) \to \mcO_Z (c) \times (\mcO_{Z, \msp}/\mfm_{Z,\msp}^{l+1}).
\]
More generally, if the image of $\rest^{l+1}_{\msp}$ is of dimension at least $m$, then the image of $\rest^{l+1}_{Z,\msp}$ is of dimension at least $m - \codim_{\mbP} (Z)$.
\end{Rem}

\subsection{Smoothness of various weighted projective varieties}

Let $\mcF = |\mcO_{\mbP} (d)|$ be the complete linear system of weighted hypersurfaces of degree $d$ in $\mbP = \mbP_{\K} (a_0,\dots,a_{n+1})$ so that $\mcF \cong \mbP^N$, where $N = h^0 (\mbP, \mcO_{\mbP} (d)) - 1$, and let $\mcW \subset \mbP \times \mcF$, together with the second projection $\mcW \to \mcF$, be the family of such weighted hypersurfaces.
We set
\[
\mcW^{\singu} = \{\, (\msp, X) \mid \text{$X$ is singular at $\msp$} \,\} \subset \mbP \times \mcF.
\]
The image of $\mcW^{\singu}$ under the second projection $\mcW^{\singu} \to \mcF$ is the space parametrizing singular weighted hypersurfaces of degree $d$ in $\mbP$.
A component $\mcV$ of $\mcW^{\singu}$ is called {\it $\mcF$-dominating} if the restriction $\mcV \to \mcF$ of the second projection $\mbP \times \mcF \to \mcF$ to $\mcV$ is dominant.
For a component $\mcV$ of $\mcW^{\singu}$, its image via the first projection $\mbP \times \mcF \to \mbP$ is denoted by $C_{\mbP} (\mcV)$ and is called the {\it $\mbP$-center} of $\mcV$.
For a component $\mcV$ of $\mcW^{\singu}$ and a point $\msp \in \mbP$, we denote by $\mcV_{\msp}$ the fiber over $\mbP$ of the projection $\mcV \to \mbP$.
We define $\Delta_{i,j} = \Delta \cap U_{i,j} \subset \mbP$.

\begin{Lem} \label{lem:crigensm}
Suppose that $d \ge a_{\max}$.
Then the following assertions hold.
\begin{enumerate}
\item For any $\mcF$-dominating component $\mcV \subset \mcW^{\singu}$, its $\mbP$-center $C_{\mbP} (\mcV)$ is contained in $\Delta$.
\item Let $i \ne j$ be such that $i, j > r$ $($i.e.\ $a_i, a_j > 1$$)$ and $a_i$ is coprime to $a_j$.
Suppose that one of the following holds.
\begin{enumerate}
\item[(i)] $d$ is divisible by $a_{\Pi}$.
\item[(ii)] $d > (a_i - 1)(a_j - 1)$ and $2 r \ge n$.
\end{enumerate}
Then, for any $\mcF$-dominating component $\mcV \subset \mcW^{\singu}$, its $\mbP$-center $C_{\mbP} (\mcV)$ is disjoint from $\Delta_{i,j}$.
\end{enumerate}
\end{Lem}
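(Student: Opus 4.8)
The plan is to reduce both assertions to inequalities on $\dim(\im \rest^2_{\msp})$ via a standard dimension count on the incidence variety. Fix an $\mcF$-dominating component $\mcV \subseteq \mcW^{\singu}$. Since $\mcV$ is closed in $\mbP \times \mcF$ and the first projection is proper, $C_{\mbP}(\mcV)$ is closed and $\mcV \to C_{\mbP}(\mcV)$ is surjective, so every $\msp \in C_{\mbP}(\mcV)$ has nonempty fiber $\mcV_{\msp}$ with
\[
\dim \mcV_{\msp} \ge \dim \mcV - \dim C_{\mbP}(\mcV) \ge N - \dim C_{\mbP}(\mcV),
\]
the last step because $\mcV$ dominates $\mcF \cong \mbP^N$. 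At a point where $\mbP$ is smooth, $X = (F = 0)$ is singular at $\msp$ exactly when $F \in \ker \rest^2_{\msp}$, so $\mcV_{\msp} \subseteq \mbP(\ker \rest^2_{\msp})$, which has dimension $N - \dim(\im \rest^2_{\msp})$. Combining the two bounds gives the key inequality
\[
\dim (\im \rest^2_{\msp}) \le \dim C_{\mbP}(\mcV),
\]
valid for every $\msp \in C_{\mbP}(\mcV)$ in the smooth locus of $\mbP$.

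For (1), suppose $C_{\mbP}(\mcV) \not\subseteq \Delta$ and choose $\msp \in C_{\mbP}(\mcV) \setminus \Delta$. Then some coordinate of weight one is nonzero at $\msp$, so $\msp \in U_i$ with $a_i = 1$, a smooth chart isomorphic to $\mbA^{n+1}$. By Lemma \ref{lem:restP}(1) with $l = 1$ and $c = d \ge a_{\max}$, the map $\rest^2_{\msp}$ is surjective, so $\dim(\im \rest^2_{\msp}) = n + 2$. The key inequality forces $n + 2 \le \dim C_{\mbP}(\mcV) \le \dim \mbP = n+1$, a contradiction; hence $C_{\mbP}(\mcV) \subseteq \Delta$.

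For (2), note that $\msp \in \Delta_{i,j} \subseteq U_{i,j}$ is a smooth point of $\mbP$, since $a_i$ is coprime to $a_j$ and $U_{i,j} \cong \mbA^n \times (\mbA^1 \setminus \{o\})$. In case (i) the hypothesis $a_{\Pi} \mid d$ lets me run the local construction in the proof of Lemma \ref{lem:restP2} on the chart $U_{i,j}$ — it uses only $\gcd(a_i,a_j)=1$ and $a_i a_j \mid d$ together with $d \ge a_{\Pi} \ge a_k a_i a_j$ — to conclude $\rest^2_{\msp}$ is surjective, whence $n+2 \le \dim C_{\mbP}(\mcV) \le n+1$ as in (1), a contradiction. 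In case (ii) I first invoke part (1) to get $C_{\mbP}(\mcV) \subseteq \Delta$, so $\dim C_{\mbP}(\mcV) \le \dim \Delta = n - r$, while Lemma \ref{lem:restP}(2), applicable because $d > (a_i-1)(a_j-1)$, gives $\dim(\im \rest^2_{\msp}) \ge r + 1$ for $\msp \in \Delta_{i,j}$. The key inequality then yields $r + 1 \le n - r$, contradicting $2r \ge n$. In all cases $C_{\mbP}(\mcV) \cap \Delta_{i,j} = \emptyset$.

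The chart identifications and the citations of the restriction-map lemmas are routine; the only genuinely delicate point is the bookkeeping in case (ii), where the numerology must line up so that the lower bound $r+1$ on $\dim(\im \rest^2_{\msp})$ strictly exceeds the upper bound $n-r = \dim \Delta$ forced by part (1), precisely under $2r \ge n$. I would also take care to apply the fiber-dimension inequality on the closed image $C_{\mbP}(\mcV)$, so that it holds at the possibly special points of $\Delta_{i,j}$ and the argument runs at every such point rather than only at a general one.
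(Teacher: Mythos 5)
Your proposal is correct and takes essentially the same route as the paper: the identical incidence-variety dimension count (your ``key inequality'' is just a repackaging of the paper's bound $\dim \mcV \le \dim C_{\mbP}(\mcV) + (\dim \mcF - c)$), combined with Lemma~\ref{lem:restP}.(1) for part (1) and Lemma~\ref{lem:restP}.(2), respectively the surjectivity statement of Lemma~\ref{lem:restP2}, for the two cases of part (2). Your one deviation --- re-running the local monomial construction from the proof of Lemma~\ref{lem:restP2} on the chart $U_{i,j}$ in case (i) instead of citing that lemma directly --- is sound and in fact slightly more careful than the paper, since Lemma~\ref{lem:restP2} as stated assumes all weights are mutually coprime, which is not literally among the hypotheses of case (i).
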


\begin{proof}
We prove (1).
Let $\mcV \subset \mcW^{\singu}$ be an $\mcF$-dominating component.
Suppose that the $\mbP$-center $C = C_{\mbP} (\mcV)$ of $\mcV$ intersects $U_i = (x_i \ne 0) \subset \mbP$ for some $i = 0,1,\dots,r$.
Since $d \ge a_{\max}$, the restriction map
\[
\operatorname{rest}^2_{\msp} \colon H^0 (\mbP, \mcO_{\mbP} (d)) \to \mcO_{\mbP} (d) \otimes (\mcO_{\mbP, \msp}/\mfm_{\msp}^2)
\]
is surjective for any point $\msp \in U_i$ by Lemma \ref{lem:restP}.
This shows that, for any point $\msp \in C \cap U_i$, $\mcV_{\msp}$ is of codimension at least $n+2$ in $\mcF = \{\msp\} \times \mcF$.
Hence we have
\[
\dim \mcV \le \dim C + (\dim \mcF - (n+2)) < \dim \mcF
\] 
since $\dim C \le n+1$.
This is impossible since $\mcV$ is $\mcF$-dominating.
Thus $C$ is contained in $(x_i = 0)$ for $i = 0,\dots,r$, and (1) is proved.

We prove (2).
Let $\mcV \subset \mcW^{\singu}$ be an $\mcF$-dominating component.
By (1), $C = C_{\mbP} (\mcV)$ is contained in $\Delta$.
Suppose that $C \cap \Delta_{i,j} \ne \emptyset$.
We set $c = n+2$ and $c = r+1$ if we are in case (i) and (ii), respectively.
By Lemmas \ref{lem:restP2} and \ref{lem:restP}.(2), the image of the restriction map
\[
\rest^2_{\msp} \colon H^0 (\mbP, \mcO_{\mbP} (d)) \to \mcO_{\mbP} (d) \otimes (\mcO_{\mbP, \msp}/\mfm_{\msp}^2)
\]
is of dimension at least $c$ for any $\msp \in \Delta_{ij}$, and thus the codimension of $\mcV_{\msp}$ is at least $c$ in $\mcF$.
We have 
\[
\dim \mcV \le \dim C + (\dim \mcF - c) < \dim \mcF,
\]
where the last inequality clearly follows when we are in case (i) and follows since $\dim C \le \dim \Delta = n-r$ and $n \le 2 r$ when we are in case (ii).
This is impossible and (2) is proved.
\end{proof}

We set $\msp_i = (0\!:\!\cdots\!:\!1\!:\!\cdots\!:\!0) \in \mbP$, where the unique $1$ is in the $(i+1)$st position.
Then we have
\[
\Delta \setminus \left(\bigcup_{r < i < j \le n+1} \Delta_{i,j} \right) = \{\msp_{r+1}, \dots,\msp_{n+1}\}.
\]

\begin{Lem} \label{lem:crismcharp}
Suppose that $a_0,\dots,a_{n+1}$ are mutually coprime to each other.
\begin{enumerate}
\item If $d$ is divisible by $a_{\Pi}$, then a general weighted hypersurface of degree $d$ in $\mbP_{\K} (a_0,\dots,a_{n+1})$ is smooth.
\item If $d_1, d_2$ are both divisible by $a_{\Pi}$, then a general weighted complete intersection of type $(d_1, d_2)$ in $\mbP_{\K} (a_0,\dots,a_{n+1})$ is smooth.
\end{enumerate}
\end{Lem}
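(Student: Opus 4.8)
The plan is to deduce both statements from the incidence-variety argument already set up in Lemma \ref{lem:crigensm}, together with the restriction-map surjectivity of Lemma \ref{lem:restP2}. The overall strategy is the standard Bertini-type dimension count: a general member of $\mcF = |\mcO_{\mbP}(d)|$ is smooth provided the singular locus $\mcW^{\singu} \subset \mbP \times \mcF$ has no $\mcF$-dominating component. So the task reduces to showing that every component $\mcV$ of $\mcW^{\singu}$ has dimension strictly less than $\dim \mcF$, which forces the image of $\mcV \to \mcF$ to be a proper closed subset, and hence the complement to parametrize smooth hypersurfaces.

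First I would prove (1). Since $a_0,\dots,a_{n+1}$ are mutually coprime, the singular locus of $\mbP$ consists precisely of the finitely many coordinate points $\msp_i$ with $a_i > 1$, and the smooth locus of $\mbP$ is covered by the charts treated in Lemma \ref{lem:restP2}. As $d$ is divisible by $a_{\Pi}$, Lemma \ref{lem:restP2} gives that $\rest^2_{\msp}$ is surjective for every smooth point $\msp \in \mbP$, so the fibre $\mcV_{\msp}$ is of codimension at least $n+2$ in $\mcF$ for every $\msp$ in the smooth locus. The only remaining points are the finitely many singular points $\msp_i$; here quasi-smoothness of a general member must be checked on the affine cone, but since $d$ is divisible by each $a_i$ the monomial $x_i^{d/a_i}$ appears and its partial derivative is nonvanishing along the corresponding axis, so no general member is singular there. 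Combining these, for any $\mcF$-dominating component one would obtain
\[
\dim \mcV \le \dim C_{\mbP}(\mcV) + \bigl(\dim \mcF - (n+2)\bigr) < \dim \mcF,
\]
using $\dim C_{\mbP}(\mcV) \le n+1$, a contradiction. Hence no $\mcF$-dominating component exists and a general member is smooth.

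For (2) I would run the analogous count on the two-step incidence variety parametrizing complete intersections. Write $\mcF_1 = |\mcO_{\mbP}(d_1)|$ and $\mcF_2 = |\mcO_{\mbP}(d_2)|$, and for a fixed first hypersurface $Y_1$ of degree $d_1$ consider cutting by a general $Y_2$ of degree $d_2$. By part (1), a general $Y_1$ is itself smooth, so it suffices to show that a general $Y_2$ meets it transversally and avoids its intersection with the singular locus of $\mbP$; the key input is again that $\rest^2_{\msp}$ is surjective on $\mbP$ for degree $d_2$ by Lemma \ref{lem:restP2} (since $a_{\Pi} \mid d_2$), whence by Remark \ref{rem:restZ} the restricted map $\rest^2_{Y_1,\msp}$ is surjective at every point $\msp$ where both $Y_1$ and $\mbP$ are smooth. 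A dimension count identical to the one above, now with the bound $\dim C \le \dim Y_1 = n$ and codimension $\ge n+1$ in $\mcF_2$, shows that the second-stage singular locus is not $\mcF_2$-dominating, so a general $Y_2$ cuts out a smooth complete intersection.

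The main obstacle I anticipate is the bookkeeping at the coordinate singular points $\msp_i$ of $\mbP$, where the ambient space itself is singular and the naive transversality argument via $\rest^2_{\msp}$ does not apply. There one must instead argue quasi-smoothness of the affine cone directly: the divisibility $a_{\Pi} \mid d$ guarantees that the pure-power monomials $x_i^{d/a_i}$ lie in $H^0(\mbP,\mcO_{\mbP}(d))$, and a general linear combination has nonvanishing gradient along each coordinate axis, so the cone is smooth away from the origin and $X$ is quasi-smooth at each $\msp_i$. Once this local analysis at the finitely many singular points is pinned down, the global conclusion follows immediately from the dimension counts above, since finitely many bad points contribute nothing of positive dimension to any $\mcF$-dominating component.
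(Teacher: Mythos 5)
Your overall strategy coincides with the paper's: part (1) is a dimension count over the smooth locus of $\mbP$ using the surjectivity of $\rest^2_{\msp}$ from Lemma \ref{lem:restP2} (the paper packages this count as Lemma \ref{lem:crigensm}, concluding that the $\mbP$-center of any $\mcF$-dominating component would have to be one of the coordinate points $\msp_i$ with $a_i>1$), and part (2) fixes a smooth general $X_1$ of degree $d_1$ and repeats the count for the second hypersurface via Lemma \ref{lem:restP2} and Remark \ref{rem:restZ}, exactly as the paper does.

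There is, however, one step that fails as written: your treatment of the coordinate points $\msp_i$. You argue that since $x_i^{d/a_i} \in H^0(\mbP,\mcO_{\mbP}(d))$, a general member has ``nonvanishing gradient along each coordinate axis'' and is therefore quasi-smooth (hence nonsingular) at $\msp_i$. But $\prt\bigl(x_i^{d/a_i}\bigr)/\prt x_i = (d/a_i)\,x_i^{d/a_i-1}$ vanishes identically whenever $\chara(\K)=p$ divides $d/a_i$ --- and this lemma is invoked by the paper precisely in positive characteristic with such divisibilities (e.g.\ in Section \ref{sec:pfthm2} the relevant degrees are divisible by $p\,a_{\Pi}$, and in Section \ref{sec:pfthm3} one works in characteristic $2$), so your gradient can be identically zero in the intended applications. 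The repair is the observation the paper actually uses, which is characteristic-free and simpler: since $x_i^{d/a_i}$ occurs with a general (hence nonzero) coefficient, a general member $F$ satisfies $F(\msp_i)\ne 0$, i.e.\ it does not pass through $\msp_i$ at all; so no $\mcF$-dominating component can have center $\{\msp_i\}$, and (for your part (2)) the general $X_1$ avoids $\Sing(\mbP)$ entirely, so every point of $X_1$ is a smooth point of $\mbP$ and Remark \ref{rem:restZ} applies everywhere on $X_1$. With that substitution your proof is complete and agrees in substance with the paper's.
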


\begin{proof}
We prove (1).
Suppose that there exists an $\mcF$-dominating component $\mcV \subset \mcW^{\singu}$.
By Lemma \ref{lem:crigensm}, $C_{\mbP} (\mcV) = \{\msp_i\}$ for some $i \in \{r+1,\dots,n+1\}$, that is, a general member of $\mcF$ is singular at $\msp_i$.
On the other hand, since $d$ is divisible by $a_i$ for any $i$, a general member of $\mcF$ does not even pass through $\msp_i$ for any $i = r+1,\dots,n+1$.
This is a contradiction and there is no $\mcF$-dominating component of $\mcW^{\singu}$.
Therefore a general member of $\mcF$ is smooth. 

We prove (2).
Let $X_1$ be a general weighted hypersurface of degree $d_1$ in $\mbP$ which is smooth by (1).
By Lemma \ref{lem:restP2}, the restriction map
\[
H^0 (X_1, \mcO_{X_1} (d_2)) \to \mcO_{X_1} (d_2) \otimes (\mcO_{X_1,\msp}/\mfm^2_{X_1, \msp})
\]
is surjective for any point $\msp \in X_1$.
From this we can conclude (by counting dimensions) that a general member $Z \in |\mcO_{X_1} (d_2)|$, which is a general weighted hypersurface of type $(d_1,d_2)$, is smooth.
\end{proof}

\begin{Lem} \label{lem:crismcharp2}
Suppose that the following conditions are satisfied.
\begin{enumerate}
\item $a_i$ is coprime to $a_j$ for any $i < j$ except for $\{i,j\} = \{n,n+1\}$.
\item $d$ is divisible by $a_i$ for any $i$.
\item $2r \ge n$.
\end{enumerate}
Then a general weighted hypersurface of degree $d$ in $\mbP (a_0,\dots,a_{n+1})$ is smooth outside $(x_0 = \cdots = x_{n-1} = 0)$.
\end{Lem}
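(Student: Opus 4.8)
The plan is to follow the strategy of Lemma~\ref{lem:crismcharp}: locate the $\mbP$-centers of the $\mcF$-dominating components of $\mcW^{\singu}$ by means of Lemma~\ref{lem:crigensm}, and then check that, apart from the coordinate points $\msp_{r+1},\dots,\msp_{n-1}$, every remaining center is forced into $Z := (x_0 = \cdots = x_{n-1} = 0)$. Observe first that $Z \subseteq \Delta$ because $r \le n-1$. Since the singular points of a general member of $\mcF$ all lie in the union of the $\mbP$-centers of the $\mcF$-dominating components (the non-dominating components contributing nothing to a general member), it suffices to prove that every $\mcF$-dominating component $\mcV \subseteq \mcW^{\singu}$ satisfies $C_{\mbP}(\mcV) \subseteq Z$.

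So fix an $\mcF$-dominating component $\mcV$ and write $C = C_{\mbP}(\mcV)$. As $a_{\max} \mid d$ gives $d \ge a_{\max}$, Lemma~\ref{lem:crigensm}(1) shows $C \subseteq \Delta$. Next I would invoke Lemma~\ref{lem:crigensm}(2), case~(ii), for every pair $\{i,j\}$ with $r < i < j \le n+1$ and $\{i,j\} \ne \{n,n+1\}$. For such a pair condition~(1) yields $\gcd(a_i,a_j) = 1$, so condition~(2) gives $a_i a_j \mid d$ and hence $d \ge a_i a_j > (a_i-1)(a_j-1)$; combined with $2r \ge n$ from condition~(3), the hypotheses of case~(ii) hold, and we conclude $C \cap \Delta_{i,j} = \emptyset$ for all these pairs.

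It remains to locate $C$ precisely. By the identity $\Delta \setminus \bigcup_{r<i<j\le n+1}\Delta_{i,j} = \{\msp_{r+1},\dots,\msp_{n+1}\}$ recalled before Lemma~\ref{lem:crismcharp}, every non-coordinate point of $C$ lies in some $\Delta_{i,j}$; having excluded all pairs but $\{n,n+1\}$, it must lie in $\Delta_{n,n+1}$. Moreover, a point of $\Delta_{n,n+1}$ at which some $x_k$ with $r+1 \le k \le n-1$ is nonzero would also lie in $\Delta_{k,n}$, which is excluded; hence every non-coordinate point of $C$ satisfies $x_0 = \cdots = x_{n-1} = 0$, i.e.\ lies in $Z$. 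Since $C$ is closed and irreducible, if $\dim C \ge 1$ then its generic point is a non-coordinate point and hence lies in $Z$, forcing $C \subseteq Z$. If instead $C = \{\msp_k\}$ is a single coordinate point with $k \in \{n,n+1\}$, then again $C \subseteq Z$; and the case $r+1 \le k \le n-1$ is impossible because $a_k \mid d$ makes $x_k^{d/a_k}$ a monomial of degree $d$, so a general member of $\mcF$ avoids $\msp_k$ and cannot be singular there, contradicting that $\mcV$ is $\mcF$-dominating. This proves $C \subseteq Z$ in every case.

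I expect the only delicate part to be the bookkeeping in the last paragraph: verifying that a point of $\Delta_{n,n+1}$ lying outside $Z$ necessarily falls into an already-excluded stratum $\Delta_{k,n}$, and then using irreducibility of $C$ to upgrade ``the generic point lies in $Z$'' to ``$C \subseteq Z$.'' All the genuinely geometric input is supplied by Lemma~\ref{lem:crigensm}, so the remaining work is combinatorial; the one new case that must be disposed of directly, exactly as in Lemma~\ref{lem:crismcharp}(1), is that of the isolated coordinate points $\msp_{r+1},\dots,\msp_{n-1}$, which are removed using divisibility of $d$ by $a_k$.
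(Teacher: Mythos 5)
Your proof is correct and follows essentially the same route as the paper: apply Lemma \ref{lem:crigensm}(1) and (2)(ii) to confine the $\mbP$-center of any $\mcF$-dominating component to $\{\msp_{r+1},\dots,\msp_{n-1}\} \cup \Gamma$, where $\Gamma = (x_0 = \cdots = x_{n-1} = 0)$, and then rule out the isolated coordinate points $\msp_{r+1},\dots,\msp_{n-1}$ using divisibility of $d$ by $a_k$. The bookkeeping you single out as delicate (points of $\Delta_{n,n+1}$ outside $\Gamma$ falling into already-excluded strata $\Delta_{k,n}$, plus irreducibility of the center) is exactly what the paper's terser proof leaves implicit.
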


\begin{proof}
Let $\mcV \subset \mcW^{\singu}$ be an $\mcF$-dominating component.
We can apply Lemma \ref{lem:crigensm}.(1), since $d \ge a_{\max}$ by the assumption (2), and also apply Lemma \ref{lem:crigensm}.(2) for $r < i < j \le n+1$ with $(i,j) \ne (n,n+1)$ and conclude that $C_{\mbP} (\mcV)$ is contained in the set $\{\msp_{r+1}, \dots,\msp_{n-1}\} \cup \Gamma$, where $\Gamma = (x_0 = \cdots x_{n-1} = 0)$.
Since $d$ is divisible by $a_i$ for $i = r+1,\dots,n-1$, $C_{\mbP} (\mcV) \ne \{\msp_i\}$ for $i = r+1,\dots,n-1$.
Thus the $\mbP$-center of a $\mcF$-dominating component is contained in $\Gamma$.
Therefore a general member of $\mcF$ is smooth outside $\Gamma$.
\end{proof}

\begin{Lem} \label{lem:crismcharp3}
Suppose that the following conditions are satisfied.
\begin{enumerate}
\item $a_0,\dots,a_{n+1}$ are mutually coprime to each other.
\item There exists $k \in \{r+1,\dots,n+1\}$ such that $d$ is divisible by $a_{\Pi}/a_k$.
\item $d > (a_i-1) (a_j - 1)$ for any $0 \le i < j \le n+1$, and $d \ge a_{\max}$.
\item $2r \ge n$.
\end{enumerate}
Then a general weighted hypersurface of degree $d$ in $\mbP (a_0,\dots,a_{n+1})$ is smooth outside the point $\msp_k = (0\!:\!\cdots\!:\!1\!:\!0\!:\!\cdots\!:\!0)$.
\end{Lem}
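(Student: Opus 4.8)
The plan is to follow the pattern of the proofs of Lemmas \ref{lem:crismcharp} and \ref{lem:crismcharp2}: first combine the two parts of Lemma \ref{lem:crigensm} to pin down the possible $\mbP$-centers of $\mcF$-dominating components of $\mcW^{\singu}$, reducing them to the coordinate points $\msp_{r+1},\dots,\msp_{n+1}$, and then use the divisibility hypothesis (2) to rule out all of these except $\msp_k$. All the geometric input is already packaged in Lemma \ref{lem:crigensm}, so the proof is essentially a matter of checking that the hypotheses of that lemma are met and of doing the arithmetic bookkeeping afterwards.

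Concretely, I would let $\mcV \subset \mcW^{\singu}$ be any $\mcF$-dominating component and locate its $\mbP$-center $C_{\mbP}(\mcV)$. Since $d \ge a_{\max}$ by hypothesis (3), Lemma \ref{lem:crigensm}.(1) gives $C_{\mbP}(\mcV) \subset \Delta$. Next, for each pair $r < i < j \le n+1$, hypothesis (1) provides that $a_i$ is coprime to $a_j$, hypothesis (3) provides $d > (a_i-1)(a_j-1)$, and hypothesis (4) provides $2r \ge n$, so Lemma \ref{lem:crigensm}.(2)(ii) applies and yields $C_{\mbP}(\mcV) \cap \Delta_{i,j} = \emptyset$. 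Combining these inclusions with the identity
\[
\Delta \setminus \left( \bigcup_{r < i < j \le n+1} \Delta_{i,j} \right) = \{\msp_{r+1},\dots,\msp_{n+1}\}
\]
gives $C_{\mbP}(\mcV) \subset \{\msp_{r+1},\dots,\msp_{n+1}\}$. As $C_{\mbP}(\mcV)$ is the image of the irreducible variety $\mcV$ under a projection, it is irreducible, and an irreducible subset of this finite (discrete) set is a single point $\msp_i$ for some $i \in \{r+1,\dots,n+1\}$.

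The key observation is then that hypothesis (2) forces $a_i \mid d$ for every $i \ne k$: since $a_{\Pi}/a_k = \prod_{j \ne k} a_j$, each $a_i$ with $i \ne k$ divides $a_{\Pi}/a_k$, which in turn divides $d$. Hence for such $i$ the monomial $x_i^{d/a_i}$ is a section of $\mcO_{\mbP}(d)$ that does not vanish at $\msp_i$, so a general member of $\mcF$ does not even pass through $\msp_i$ and in particular is not singular there; this excludes $\msp_i$ as a $\mbP$-center for every $i \in \{r+1,\dots,n+1\} \setminus \{k\}$. The point $\msp_k$ cannot be excluded in this way, because $a_k$ need not divide $d$, which is exactly why the conclusion only asserts smoothness outside $\msp_k$. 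Thus the $\mbP$-center of every $\mcF$-dominating component equals $\{\msp_k\}$, and a general weighted hypersurface of degree $d$ is smooth outside $\msp_k$. I do not expect any serious obstacle here, as the only nontrivial content is verifying the hypotheses of Lemma \ref{lem:crigensm}.(2)(ii) for all relevant pairs and tracking the divisibility $a_{\Pi}/a_k \mid d$ through to each individual $a_i$.
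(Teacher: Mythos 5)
Your proposal is correct and follows essentially the same route as the paper: apply Lemma \ref{lem:crigensm}.(1) and \ref{lem:crigensm}.(2)(ii) to confine the $\mbP$-center of any $\mcF$-dominating component to $\{\msp_{r+1},\dots,\msp_{n+1}\}$, then use the divisibility $a_i \mid d$ for $i \ne k$ (coming from $a_{\Pi}/a_k \mid d$) to show a general member avoids each $\msp_i$ with $i \ne k$. The paper's proof is just a terser version of exactly this argument, with the monomial $x_i^{d/a_i}$ trick appearing implicitly via the proof of Lemma \ref{lem:crismcharp}.(1).
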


\begin{proof}
Let $\mcV \subset \mcW^{\singu}$ be an $\mcF$-dominating component.
We can apply Lemma \ref{lem:crigensm} and conclude that $C_{\mbP} (\mcV)$ is contained in $\{\msp_{r+1}, \dots,\msp_{n+1}\}$.
Since $d$ is divisible by $a_i$ for $i \ne k$, $C_{\mbP} (\mcV) \ne \{\msp_i\}$ for $i \ne k$.
Therefore $C_{\mbP} (\mcV) \subset \{\msp_k\}$ and the proof is completed.
\end{proof}

\subsection{Characterization of smooth well formed weighted hypersurfaces}

\begin{Lem} \label{lem:charactsmoothwh}
Let $X$ be a general weighted hypersurface of degree $d$ in $\mbP_{\K} (a_0,\dots,a_{n+1})$.
Then $X$ is smooth, well formed and is not a linear cone if and only if the following conditions are satisfied:
\begin{enumerate}
\item $a_0,\dots,a_{n+1}$ are mutually coprime to each other.
\item $d$ is divisible by $a_{\Pi}$.
\item $d \ge 2 a_{\max}$.
\end{enumerate}
\end{Lem}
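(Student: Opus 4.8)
The plan is to prove the two implications separately, using Lemma \ref{lem:crismcharp}.(1) for the sufficiency of smoothness and elementary weighted-projective geometry for everything else.

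For the ``if'' direction I would assume (1)--(3). Smoothness of a general $X$ is then exactly Lemma \ref{lem:crismcharp}.(1), whose hypotheses are (1) and (2). For well-formedness I note that under (1) any two distinct weights are coprime, so a singular stratum $\Pi_J$, which requires $\gcd\{a_j \mid j \in J\} > 1$, can only occur for $|J| = 1$; hence $\Sing \mbP = \{\msp_i \mid a_i > 1\}$ consists of isolated points, each of codimension $n+1$. In the range $n \ge 3$ relevant to us this exceeds $2$, so $X$ contains no codimension-$2$ singular stratum and is automatically well formed. Finally (3) gives $d \ge 2 a_{\max} > a_i$ for every $i$, so by Remark \ref{rem:lincone} a general $X$ is not a linear cone.

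For the ``only if'' direction, suppose a general $X$ is smooth, well formed and not a linear cone. First I would observe that (3) is a formal consequence of (1), (2) and the not-a-linear-cone hypothesis: once (1) and (2) are known, $a_{\max}$ divides $d$, while by Remark \ref{rem:lincone} not being a linear cone forces $d \ne a_i$ for all $i$, in particular $d \ne a_{\max}$; since $d$ is a positive multiple of $a_{\max}$ different from $a_{\max}$, we get $d \ge 2 a_{\max}$. It therefore remains to prove (1) and (2), and the engine for both is the statement that a smooth $X$ is disjoint from $\Sing \mbP$. Granting this, (1) follows because if $\gcd(a_i, a_j) = g > 1$ then the curve $C_{ij} = \Pi_{\{i,j\}}$ lies in $\Sing \mbP$; since $X \in |\mcO_{\mbP}(d)|$ is an ample divisor it meets (or contains) this complete curve, contradicting disjointness. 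Once (1) holds, $\Sing \mbP = \{\msp_i \mid a_i > 1\}$, and $X \cap \Sing \mbP = \emptyset$ forces $\msp_i \notin X$ for every $i$ with $a_i > 1$; as $\msp_i \notin X$ precisely when the monomial $x_i^{d/a_i}$ occurs in a general defining equation, i.e.\ precisely when $a_i \mid d$, and since $a_i \mid d$ trivially when $a_i = 1$, we conclude $a_i \mid d$ for all $i$. Mutual coprimality then upgrades this to $a_{\Pi} \mid d$, which is (2).

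The hard part will be justifying that smoothness of $X$ implies $X \cap \Sing \mbP = \emptyset$, equivalently that $X$ cannot pass smoothly through a quotient singularity of $\mbP$. Over the generic point of a stratum $\Pi_J$ the transverse singularity is cyclic of order $g = \gcd\{a_j \mid j \in J\}$ with transverse weights $a_k \bmod g$ (not all zero, by well-formedness of $\mbP$), and $X$ is smooth there only if the induced $\mu_g$-action on the tangent space of $X$ is generated by a pseudo-reflection; this degenerate case is what one must exclude, and I expect well-formedness of $X$ to be exactly the hypothesis that rules it out. Rather than carrying out this case analysis by hand, the clean route is to first note that a general smooth $X$ is in particular quasi-smooth and then invoke the standard description $\Sing X = X \cap \Sing \mbP$ for quasi-smooth well-formed weighted hypersurfaces, after which the divisibility and linear-cone bookkeeping above is routine.
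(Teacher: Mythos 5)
Your ``if'' direction and the numerical bookkeeping in the ``only if'' direction are correct and essentially identical to the paper's proof (your citation of Lemma \ref{lem:crismcharp}.(1) for smoothness is in fact the right one; the paper's text points to Lemma \ref{lem:crismcharp2}, which appears to be a typo, since that lemma needs $2r \ge n$ and only gives smoothness outside a locus). The gap is exactly in what you yourself call the hard part. You propose to ``first note that a general smooth $X$ is in particular quasi-smooth'' and then use $\Sing X = X \cap \Sing \mbP$. But smoothness does \emph{not} imply quasi-smoothness ``in particular'': quasi-smoothness is a statement about the affine cone, and precisely over $\Sing \mbP$, where the $\mbG_m$-action on the punctured cone has stabilizers, smoothness of $X$ gives no direct control on the cone. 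The implication (smooth $+$ well formed $\Rightarrow$ quasi-smooth) is a genuine theorem --- it is exactly \cite[Corollary 2.14]{PS}, which is what the paper invokes --- and your proposed shortcut is circular: the pseudo-reflection/transverse quotient analysis that you sketch and then set aside is precisely the content one would have to supply in order to obtain quasi-smoothness. As written, the crux of the ``only if'' direction is asserted rather than proved; to repair it you must either cite the Przyjalkowski--Shramov result explicitly or actually carry out the excluded case analysis.

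Second, the lemma is stated over an algebraically closed field $\K$ of \emph{arbitrary} characteristic (this is the standing convention of Section \ref{sec:smwhyp}), and your argument never addresses characteristic $p > 0$. This matters: \cite{PS} is a characteristic-zero result, and even your by-hand alternative (cyclic quotient singularities, $\mu_g$-actions, pseudo-reflections) breaks down when $p \mid g$. The paper handles this by lifting a very general smooth $X$ over $\K$ to characteristic $0$ via the ring of Witt vectors, observing that the lift is again smooth (generic smoothness), well formed and not a linear cone, and then applying the characteristic-zero argument to the lift to deduce the numerical conditions. Without some such reduction, your proof establishes the lemma only in characteristic zero.
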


\begin{proof}
Set $\mbP_{\K} := \mbP_{\K} (a_0,\dots,a_{n+1})$.
Suppose that (1), (2) and (3) are satisfied.
By (3), $X$ is not a linear cone.
By (1), $\mbP_{\K}$ has at most isolated singularities and hence $X$ is clearly well formed.
By Lemma \ref{lem:crismcharp2}, $X$ is smooth.

Conversely, suppose that $X$ is smooth, well formed and is not a linear cone.
We first prove that (1), (2) and (3) hold assuming that $\operatorname{char} (\K) = 0$.
By \cite[Corollary 2.14]{PS}, $X$ is quasi-smooth, which implies $\Sing (X) = X \cap \Sing (\mbP_{\K})$.
Since $X$ is smooth, this means $X \cap \Sing (\mbP_{\K}) = \emptyset$.
Hence $\mbP_{\K}$ has at most isolated singularities and $X$ avoids those points.
The former implies (1).
The latter implies that $d$ is divisible by $a_i$ for any $i$, which implies (2).
We now know that $d$ is divisible by $a_{\max}$.
The case $d = a_{\max}$ does not happen since $X$ is not a linear cone.
Thus we have (3).
Now suppose that $\operatorname{char} (\K) = p > 0$.
Then we can lift a very general weighted hypersurface $X$ of degree $d$ in $\mbP_{\K}$ to a very general weighted hypersurface, denoted by $X_K$, of degree $d$ in $\mbP_{K} (a_0,\dots,a_{n+1})$, where $K$ is an algebraically closed field with $\operatorname{char} (K) = 0$ (using the ring of Witt vectors with residue field $\K$).
We see that $X_K$ is smooth, by the generic smoothness, and it is clearly well formed and is not a linear cone.
Then (1), (2) and (3) are satisfied by the above argument.
\end{proof}

Further restrictions are imposed on $a_0,\dots,a_{n+1}$ and $d$ when a general weighted hypersurface is in addition assumed to be Fano.

\begin{Lem} \label{lem:smwhnumerics}
Suppose that $n \ge 3$ and that a general weighted hypersurface of degree $d$ in $\mbP_{\K} (a_0,\dots,a_{n+1})$, $a_0 \le \cdots \le a_{n+1}$, is a smooth well formed Fano variety which is not a linear cone.
Then the following assertions hold.
\begin{enumerate}
\item $2 r \ge n+1$.
\item $d \ge 3 a_n$ unless $d = 2$ and $r = n+1$.
\item $d \ge 3 a_{n+1}$ unless either $d = 2 a_{n+1}$ and $r \ge n$ or $d = 2 a_{n+1}$, $r = n-1$ and $a_n = 2$.
\end{enumerate}
\end{Lem}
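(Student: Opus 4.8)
The plan is to feed everything through the three structural facts furnished by Lemma \ref{lem:charactsmoothwh}: the weights $a_0,\dots,a_{n+1}$ are pairwise coprime, $a_{\Pi} \mid d$, and $d \ge 2 a_{\max} = 2 a_{n+1}$; the Fano hypothesis supplies in addition $d \le a_{\Sigma} - 1$. I would set $m = (n+1) - r$ for the number of weights exceeding $1$, so that $a_{r+1} < \cdots < a_{n+1}$ are $m$ distinct integers $\ge 2$ (distinctness being forced by pairwise coprimality) and $a_{\Pi} = a_{r+1}\cdots a_{n+1}$.

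For (1), observe first that $2r \ge n+1$ is equivalent to $r \ge m$. Combining $a_{\Pi} \le d \le a_{\Sigma}-1$ with $a_{\Sigma} = (r+1) + \sum_{i>r} a_i$ gives
\[
r \ge \prod_{i > r} a_i - \sum_{i > r} a_i .
\]
I would then dispose of $m \le 2$ by hand: $m = 0$ gives $r = n+1$, $m = 1$ gives $r = n \ge 3$, and $m = 2$ gives $r = n-1 \ge 2$, each using $n \ge 3$. For $m \ge 3$ I would prove the elementary inequality $\prod_{i>r} a_i - \sum_{i>r} a_i \ge m$ for $m$ distinct integers $\ge 2$: since $a_{r+j} \ge j+1$ and increasing any single weight strictly increases the left-hand side (the product changes by $\prod_{k\neq j}a_{r+k}\ge 6>1$, the sum only by $1$), the minimum is attained at $(2,3,\dots,m+1)$, where the value is $(m+1)! - m(m+3)/2 \ge m$. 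This yields $r \ge m$, hence (1).

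For (2) I would split on $a_n$. If $a_n \ge 2$ then $a_{n+1} > a_n$ forces $a_{n+1} \ge 3$, so $d \ge a_{\Pi} \ge a_n a_{n+1} \ge 3 a_n$. If $a_n = 1$, then $3a_n = 3$, and either $a_{n+1}\ge 2$, whence $d \ge 2a_{n+1}\ge 4$, or $a_{n+1}=1$ (so $r=n+1$), whence $d\ge 2$ and the bound $d\ge 3$ fails only for $d=2$ --- exactly the stated exception. For (3) the cleanest route is a divisibility count. Put $P = a_{\Pi}/a_{n+1} = \prod_{r < i \le n} a_i$ and write $d = e\,a_{\Pi} = eP\,a_{n+1}$ with $e \ge 1$; then $2a_{n+1} \le d < 3a_{n+1}$ forces $eP = 2$, so $(e,P) = (2,1)$ or $(1,2)$. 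The case $P=1$ means no $a_i$ with $r<i\le n$ exceeds $1$, i.e. $r \ge n$, with $d = 2a_{n+1}$ (first exception); the case $P=2$ forces exactly one of $a_{r+1},\dots,a_n$ to equal $2$ and the rest to equal $1$, i.e. $r = n-1$ and $a_n = 2$, again with $d = a_{\Pi} = 2a_{n+1}$ (second exception). Hence $d \ge 3a_{n+1}$ whenever neither exception holds.

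I expect the only genuine work to lie in part (1): translating the Fano and divisibility constraints into the inequality $r \ge \prod - \sum$ and then securing the elementary bound for $m \ge 3$, while separately checking the boundary values $m \le 2$ where the hypothesis $n \ge 3$ is actually used. Parts (2) and (3) should then follow quickly from $a_{\Pi} \mid d$, $d \ge 2a_{n+1}$, and pairwise coprimality, with no appeal to the Fano condition beyond what is already encoded in Lemma \ref{lem:charactsmoothwh}.
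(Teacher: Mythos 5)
Your proposal is correct and follows essentially the same route as the paper: both feed the statement through Lemma \ref{lem:charactsmoothwh} together with the Fano bound $d < a_{\Sigma}$, treat part (1) by separating the regimes $r \ge n-1$ and $r \le n-2$ and playing the product of the non-unit weights against their sum, and settle (2) and (3) by divisibility and coprimality arguments. The only real difference is in (1) for $r \le n-2$: you establish the symmetric inequality $\prod_{i>r} a_i - \sum_{i>r} a_i \ge n+1-r$ via a minimization/perturbation argument, whereas the paper factors out $a_{n+1}$ and reaches the same conclusion in one line from $(n-r+1)!\,a_{n+1} \le a_{\Pi} \le d < a_{\Sigma} \le r+1+(n-r+1)a_{n+1}$.
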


\begin{proof}
We note that the assumption implies that the conditions (1), (2) and (3) in Lemma \ref{lem:charactsmoothwh} are satisfied.

We prove (1).
If $r \ge n-1$, then the assertions follows immediately since $n \ge 3$.
Thus we assume $r \le n-2$.
Since $2 \le a_{r+1}, 3 \le a_{r+2}, \dots, n-r+1 \le a_n$, we have
\[
(n-r+1)! a_{n+1} \le a_{\Pi} \le d.
\]
On the other hand, the assumption that a general weighted hypersurface $X$ of degree $d$ in $\mbP (a_0,\dots,a_{n+1})$ is Fano implies
\[
d < a_{\Sigma} \le r + 1 + (n-r+1) a_{n+1}.
\]
Combining the above inequalities, we have
\[
(n-r+1) ((n-r)! - 1) a_{n+1} < r + 1.
\]
Since we are assuming $r \le n-2$, we have $(n-r)!-1 \ge 1$.
Hence we have $n-r+1 < r + 1$ and this proves (1).

We prove (2).
Suppose that $d < 3 a_n$.
Since $d$ is divisible by $a_n$ and $X$ is not a linear cone, we have $d = 2 a_n$. 
Since $a_{n+1}$ divides $d$ and $a_{n+1}$ is coprime to $a_n$, we have $a_{n+1} \le 2$.
If $a_{n+1} = 1$, then we have $r = n+1$ and $d = 2$.
Suppose that $a_{n+1} = 2$.
Then $a_0 = \cdots = a_n = 1$.
In particular we have $d = 2 a_n = 2 = a_{n+1}$.
But this is impossible since $X$ is not a linear cone.
This proves (2).

We prove (3).
Suppose that $d < 3 a_{n+1}$.
By the similar argument as above, we have $d = 2 a_{n+1}$ and $a_n \le 2$.
If $a_n = 1$, then we have $r \ge n$, and if $a_n = 2$, then $r = n-1$.
This proves (3).
\end{proof}

\section{Proof of Theorem \ref{mainthm2}} \label{sec:pfthm2}

Let $n \ge 3$, $a_0,\dots,a_{n+1}$ and $d$ be positive integers which satisfy the assumptions of Theorem \ref{mainthm2}, i.e.\ a general degree $d$ weighted hypersurface in $\mbP_{\mbC} := \mbP_{\mbC} (a_0,\dots,a_{n+1})$ is smooth and well formed, and for the smallest prime number $p$ dividing $d/a_{\Pi} > 1$, the inequality
\[
d \ge \frac{p}{p+1} a_{\Sigma}
\]
is satisfied.
Note that a very general weighted hypersurface of degree $d$ in $\mbP_{\mbC}$ is not a linear cone (cf.\ Remark \ref{rem:lincone}), hence the weights $a_i$ are mutually coprime to each other, $d \ge 2 a_{\max}$ and $d$ is divisible by $a_{\Pi}$ by Lemma \ref{lem:charactsmoothwh}.

\begin{Lem} \label{lem:known2}
\emph{Theorem \ref{mainthm2}} holds true if in addition one of the following is satisfied.
\begin{enumerate}
\item $d \ge a_{\Sigma}$.
\item $d < 2 p a_{\max}$.
\end{enumerate}
\end{Lem}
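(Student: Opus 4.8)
The plan is to dispose of the two supplementary cases separately: case (1) by a direct canonical-class argument, and case (2) by reducing to the results on hypersurfaces in projective space and on cyclic covers recalled in the introduction.

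For case (1), I would observe that $d \ge a_{\Sigma}$ means $I_X = a_{\Sigma} - d \le 0$, so $X$ is not Fano. Since $X$ is smooth and well formed, adjunction gives $\omega_X \cong \mcO_X (d - a_{\Sigma})$ with $d - a_{\Sigma} \ge 0$. As $X$ is not a linear cone we have a weight one coordinate, say $x_0$, and the monomial $x_0^{\,d - a_{\Sigma}}$ has degree $d - a_{\Sigma} < d = \deg F$, so it is not divisible by $F$ and restricts to a nonzero section; hence $H^0 (X, \Omega_X^n) = H^0 (X, \omega_X) \ne 0$. Applying Lemma \ref{lem:Totaro} with $i = n > 0$ shows that $X$ is not stably rational. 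This settles (1), and the argument is routine.

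For case (2), the first step is to extract the constraints on the weights. Since $p \mid e = d/a_{\Pi}$ we have $d \ge p\, a_{\Pi}$, and together with the hypothesis $d < 2 p\, a_{\max}$ this forces $a_{\Pi} < 2\, a_{\max}$. Because the weights are pairwise coprime, no weight other than the maximal one can exceed $1$; thus, after reordering, $a_0 = \cdots = a_n = 1$ and $a_{n+1} = a_{\max} =: a$, so $\mbP_{\mbC} = \mbP_{\mbC} (1^{n+1}, a)$ and $a_{\Pi} = a$. Then $e = d/a$ is a multiple of $p$ lying in $[2, 2p)$, which forces $e = p$ and $d = p\, a$.

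The second step is to match the numerics with the cyclic cover hypothesis. With $d = p a$ and $a_{\Sigma} = n + 1 + a$, the inequality $d \ge \tfrac{p}{p+1} a_{\Sigma}$ rearranges to $a p \ge n + 1$, i.e.\ $d \ge n + 1$. A degree $p a$ weighted hypersurface in $\mbP_{\mbC} (1^{n+1}, a)$ is of the type of a $p$-fold cyclic cover of $\mbP^n$ branched along a degree $p a$ hypersurface (project away from $\msp_{n+1}$), and the failure of stable rationality for a very general such variety when $d \ge n + 1$ is exactly what is established in \cite{CTPcyclic} and \cite{Okcyclic}; the degenerate subcase $a = 1$ is a degree $p$ hypersurface in $\mbP^{n+1}$ with $p \ge n + 1$, covered by \cite{Totaro} or by the same results. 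Invoking these yields the conclusion for (2). The main obstacle I anticipate is precisely this identification step: a very general $X_{p a} \subset \mbP_{\mbC} (1^{n+1}, a)$ is not literally in cyclic normal form $x_{n+1}^p = f(x_0,\dots,x_n)$, so one must verify that it nonetheless falls within the scope of the cited results—either because those are proved for general members of $|\mcO_{\mbP}(p a)|$ rather than only for the normal form, or by producing the requisite degeneration. This verification, together with the bookkeeping of the boundary value $e = p$ and the subcase $a = 1$, is where the real work lies.
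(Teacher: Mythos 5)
Your proposal follows essentially the same route as the paper's proof: case (1) via $H^0(X,\omega_X) \ne 0$ together with Lemma \ref{lem:Totaro}, and case (2) by pinning the numerics down to $\mbP_{\mbC}(1^{n+1},a)$ with $d = pa$, translating the hypothesis $d \ge \frac{p}{p+1}a_{\Sigma}$ into $d \ge n+1$, and invoking the cyclic-cover results of \cite{CTPcyclic} and \cite{Okcyclic}.

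Two points of comparison. First, your justification of case (1) contains a false step: being ``not a linear cone'' does not produce a weight-one coordinate. For example, a very general $X_{4620} \subset \mbP_{\mbC}(2,3,5,7,11)$ is smooth, well formed, not a linear cone (the conditions of Lemma \ref{lem:charactsmoothwh} hold), satisfies $e = 2 > 1$ and $d \ge a_{\Sigma} = 28$, yet every weight exceeds $1$, so the monomial $x_0^{d-a_{\Sigma}}$ you use does not exist. The conclusion $H^0(X,\omega_X) \ne 0$ is nonetheless correct (and the paper simply asserts it): since $d - a_{\Sigma} < d$, one only needs \emph{some} monomial of weighted degree $d - a_{\Sigma}$, which exists trivially if some $a_i = 1$, and otherwise by Lemma \ref{lem:easy} applied to two coprime weights $a_i, a_j$, because $d - a_{\Sigma} \ge 2a_{\Pi} - a_{\Sigma} \ge (a_i-1)(a_j-1)$ under the standing hypotheses. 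Second, the ``identification step'' you flag as the real work in case (2) is resolved in the paper exactly in the way you anticipate: a very general $X_{pa} \subset \mbP_{\mbC}(1^{n+1},a)$, whose equation may be normalized to $x_{n+1}^p + x_{n+1}^{p-1}f_a + \cdots + f_{pa} = 0$, is degenerated to the cyclic normal form $x_{n+1}^p + f_{pa} = 0$, i.e.\ to a degree $p$ cyclic cover of $\mbP^n_{\mbC}$ branched along a very general degree $pa$ hypersurface; the failure of stable rationality of the special fiber from \cite{CTPcyclic}, \cite{Okcyclic} then propagates to $X$ by the specialization theorem \cite[Theorem 2.1]{Voisin}. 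So your outline is sound, but to be a complete proof it needs the repair in case (1) and the explicit degeneration plus specialization statement in case (2).
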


\begin{proof}
Let $W$ be a very general weighted hypersurface of degree $d$ in $\mbP_{\mbC} (a_0,\dots,a_{n+1})$.
If we are in case (1), then $H^0 (W, \omega_W) \ne 0$ and $W$ is not stably rational by Lemma \ref{lem:Totaro}.

Suppose that we are in case (2).
We assume that $a_{\max} = a_{n+1}$ and write $d = m p a_{\Pi}$ for some positive integer $m$.
Then we have
\[
d = m p a_{\Pi} < 2 p a_{\max} = 2 p a_{n+1},
\] 
which implies $m = 1$ and $a_0 = \cdots = a_n = 1$.
Then $W$ degenerates to a degree $p$ cyclic cover $W'$ of $\mbP_{\mbC}^n$ branched along a very general hypersurface of degree $d = p a_{n+1}$ and the condition of Theorem \ref{mainthm2} is equivalent to $d \ge n+1$.
Then the failure of stable rationality of $W'$, hence of $W$ by the specialization theorem \cite[Theorem 2.1]{Voisin}, is proved in \cite{CTPcyclic} and \cite{Okcyclic}.
\end{proof}

In the following we assume that we are in none of the cases (1) and (2) of Lemma \ref{lem:known2} so that $n, a_0,\dots,a_{n+1}, d$ and $p$ satisfy the following.

\begin{Cond} \label{cd2}
\begin{enumerate}
\item $a_0,\dots,a_{n+1}$ are mutually coprime to each other and $n \ge 3$.
\item $p$ is a prime number and $d$ is divisible by $p a_{\Pi}$.
\item $2 p a_{\max} \le d < a_{\Sigma}$.
\item $d \ge \frac{p}{p+1} a_{\Sigma}$.
\end{enumerate}
\end{Cond}

Note that the conditions (1), (2) and (4) follow from the assumption of Theorem \ref{mainthm2} and Lemma \ref{lem:charactsmoothwh}. 
The condition (3) is due to Lemma \ref{lem:known2}.

We explain a degeneration of weighted hypersurfaces which enables us to pass to characteristic $p$ in the proof of Theorem \ref{mainthm2}.
We set $b = d/p$.

\begin{Rem} \label{rem:spargumentThm2}
Under the above setting, we consider the variety
\[
\mcX := (y^p - f = t y - g = 0) \subset \mbP_{\mbC} (a_0,\dots,a_{n+1},b) \times \mbA^1_t,
\]
where we take $x_0,\dots,x_{n+1},y$ as homogeneous coordinates of degree $a_0,\dots,a_{n+1},b$, respectively, and $f, g \in \mbC [x_0,\dots,x_{n+1}]$ are homogeneous polynomials of degree $d$, $b$, respectively.
We assume that $f$ and $g$ are very general.
By eliminating the coordinate $y$, the fiber of $\mcX \to \mbA^1_t$ over a point except for the origin is a (very general) weighted hypersurface of degree $d$ in $\mbP_{\mbC} = \mbP_{\mbC} (a_0,\dots,a_{n+1})$ and, for the fiber $\mcX_o$ over the origin $o \in \mbA^1$, we have an isomorphism 
\[
\mcX_o \cong (y^p - f = g = 0) \subset \mbP_{\mbC} (a_0,\dots,a_{n+1},b),
\]
which is a degree $p$ cyclic cover of the weighted hypersurface $(g = 0) \subset \mbP_{\mbC}$ branched along the divisor $(f = g = 0) \subset \mbP_{\mbC}$.
This degeneration originates \cite[Example 4.3]{Mori}.
By Lemma \ref{lem:crismcharp}, both $(g = 0) \subset \mbP_{\mbC}$ and $(f = g = 0) \subset \mbP_{\mbC}$ are smooth since $\deg f = d$ and $\deg g = b = d/p$ are both divisible by $a_{\Pi}$, which implies that $\mcX_o$ is smooth.

By the degeneration theorem \cite[Theorem 2.1]{Voisin}, to prove Theorem \ref{mainthm2}, it is enough to show that $\mcX_o$ is not universally $\CH_0$-trivial.
By Theorem \ref{thm:sp}, it is then enough to work over an algebraically closed field $\K$ of characteristic $p$, consider a weighted hypersurface of the form 
\[
X :=  (y^p - f = g = 0) \subset \mbP_{\K} (a_0,\dots,a_{n+1},b),
\]
where $f, g \in \K [x_0,\dots,x_{n+1}]$ are very general, and show the existence of a universally $\CH_0$-trivial resolution $\varphi \colon \tilde{X} \to X$ such that $\tilde{X}$ is not universally $\CH_0$-trivial.
Note that $X$ is the covering of $Z := (g = 0) \subset \mbP_{\K} (a_0,\dots,a_{n+1})$ obtained by taking the $p$th roots of the section $f \in H^0 (Z, \mcO_Z (d))$.
\end{Rem}

In the following, we work over an algebraically closed field $\K$ of characteristic $p$.
Let $f, g \in \K [x_0,\dots,x_{n+1}]$ be general homogeneous polynomials of degree $d, b = d/p$, respectively.
We set
\[
\begin{split}
X &= (y^p - f = g = 0) \subset \tilde{\mbP} := \mbP_{\K} (a_0,\dots,a_{n+1},b), \\
Z &= (h = 0) \subset \mbP := \mbP_{\K} (a_0,\dots,a_{n+1}),
\end{split}
\]
and let $\pi \colon X \to Z$ be the natural morphism.
Here $x_0,\dots,x_{n+1},y$ are homogeneous of degree $a_0,\dots,a_{n+1},b$ of $\tilde{\mbP}$, respectively, and we use the same coordinates $x_0,\dots,x_{n+1}$ for the homogeneous coordinates of $\mbP$.

\begin{Lem} \label{lem:Zsm2}
$Z$ is smooth.
\end{Lem}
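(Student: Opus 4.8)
The plan is to reduce directly to the smoothness criterion already established in Lemma \ref{lem:crismcharp}.(1). The key numerical observation is that the degree $b = d/p$ of $g$ is divisible by $a_{\Pi}$: indeed, by Condition \ref{cd2}.(2) the degree $d$ is divisible by $p a_{\Pi}$, so dividing by $p$ gives $a_{\Pi} \mid b$. Moreover, the weights $a_0,\dots,a_{n+1}$ are mutually coprime by Condition \ref{cd2}.(1). These are precisely the two hypotheses appearing in Lemma \ref{lem:crismcharp}.(1).

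With these facts in hand, Lemma \ref{lem:crismcharp}.(1) applies verbatim over the algebraically closed field $\K$ of characteristic $p$, since the whole of Section \ref{sec:smwhyp} is valid in arbitrary characteristic. Thus, as $a_0,\dots,a_{n+1}$ are mutually coprime and $b$ is divisible by $a_{\Pi}$, a general weighted hypersurface of degree $b$ in $\mbP_{\K} (a_0,\dots,a_{n+1})$ is smooth. Since $g$ is chosen general, $Z = (g = 0)$ is such a hypersurface, and hence $Z$ is smooth.

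There is essentially no obstacle here beyond recording the divisibility $a_{\Pi} \mid b$; all the substantive content is already packaged in Lemma \ref{lem:crismcharp}.(1), whose proof in turn rests on the dimension count of Lemma \ref{lem:crigensm} together with the observation that a general member whose degree is divisible by every $a_i$ does not pass through the coordinate points $\msp_{r+1},\dots,\msp_{n+1}$. The only point worth a second glance is that genericity of $g$ as a homogeneous polynomial of degree $b$ places $Z$ in the smooth locus of the linear system $|\mcO_{\mbP} (b)|$ guaranteed by the lemma, which is immediate. In short, the lemma is a bookkeeping step recording that the base of the cyclic cover inherits smoothness from the divisibility built into Condition \ref{cd2}.
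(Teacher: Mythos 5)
Your proof is correct and is exactly the paper's argument: the paper's own proof is the one-line observation that Condition \ref{cd2}.(1) and (2) give mutual coprimality of the weights and $a_{\Pi} \mid b = d/p$, so Lemma \ref{lem:crismcharp}.(1) (valid in arbitrary characteristic) applies to the general degree-$b$ hypersurface $Z$. Your write-up just makes the divisibility bookkeeping explicit.
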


\begin{proof}
In view of (1) and (2) of Condition \ref{cd2}, this follows from Lemma \ref{lem:crismcharp}.
\end{proof}

We set $\mcL = \mcO_Z (b)$ and we view $f$ as an element of $H^0 (Z,\mcL^p) = H^0 (Z, \mcO_Z (d))$.
Then $\pi \colon X \to Z$ can be identified with the covering of $Z$ obtained by taking the $p$th roots of $f \in H^0 (Z, \mcL^p)$.
In the following we assume that $a_0 = \cdots = a_r = 1$ and $a_i > 1$ for any $i > r$.
We set $\Delta = (x_0 = \cdots = x_r = 0) \subset \mbP$ and $\Delta_Z = \Delta \cap Z$.

\begin{Lem} \label{lem:noncritdelta2}
A general $f \in H^0 (Z, \mcL^p)$ does not have a critical point along $\Delta_Z$.
\end{Lem}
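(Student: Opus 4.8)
The plan is to set up the incidence variety
\[
\mcI = \{\, (\msq, f) \mid \msq \in \Delta_Z, \ f \text{ has a critical point at } \msq \,\} \subset \Delta_Z \times H^0 (Z, \mcL^p),
\]
estimate its dimension, and show that its image under the second projection is a proper closed subset of $H^0 (Z, \mcL^p)$; this yields that a general $f$ has no critical point along $\Delta_Z$. Since $V^{\crit}_{\msq}$ is the kernel of the linear-part map factoring through $\rest^2_{Z,\msq}$ (Remark \ref{rem:admcr}), the fiber of $\mcI \to \Delta_Z$ over $\msq$ has codimension in $H^0(Z,\mcL^p)$ equal to the dimension of the space of linear forms in the image of $\rest^2_{Z,\msq}$, so the whole argument reduces to a lower bound on this codimension together with an upper bound on $\dim \Delta_Z$.

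First I would locate the points of $\Delta_Z$. Because $\deg g = b$ is divisible by $a_{\Pi}$, each coordinate point $\msp_k$ with $k > r$ carries the monomial $x_k^{b/a_k}$, so a general $Z = (g = 0)$ avoids every $\msp_k$; likewise $g|_{\Delta} \ne 0$, whence $\Delta \not\subset Z$ and $\dim \Delta_Z \le \dim \Delta - 1 = n - r - 1$. Using the stratification $\Delta \setminus (\bigcup_{r < i < j \le n+1} \Delta_{i,j}) = \{\msp_{r+1}, \dots, \msp_{n+1}\}$, every $\msq \in \Delta_Z$ then lies in some $\Delta_{i,j} = \Delta \cap U_{i,j}$ with $r < i < j \le n+1$; in particular $\mbP$ is smooth at $\msq$, as is $Z$ by Lemma \ref{lem:Zsm2}, so Remark \ref{rem:restZ} is applicable there.

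Next I would bound the codimension of $V^{\crit}_{\msq}$ from below. At such $\msq$ the weights $a_i, a_j$ are coprime (Condition \ref{cd2}(1)) and $d \ge a_{\Pi} \ge a_i a_j > (a_i - 1)(a_j - 1)$, so Lemma \ref{lem:restP}(2) shows that the image of $\rest^2_{\msq}$ on $\mbP$ contains the $r+1$ independent linear forms $\tilde{x}_0, \dots, \tilde{x}_r$. Applying Remark \ref{rem:restZ} with $\codim_{\mbP} Z = 1$, the image of $\rest^2_{Z,\msq}$ still contains at least $r$ independent linear forms, so $V^{\crit}_{\msq}$ has codimension at least $r$ in $H^0(Z, \mcL^p)$. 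Therefore
\[
\dim \mcI \le \dim \Delta_Z + \bigl(\dim H^0(Z, \mcL^p) - r\bigr) \le (n - r - 1) + \dim H^0(Z, \mcL^p) - r,
\]
which is strictly less than $\dim H^0(Z, \mcL^p)$ exactly when $2r > n - 1$. The decisive numerical input is that the original hypersurface is Fano ($d < a_{\Sigma}$ by Condition \ref{cd2}(3)), so Lemma \ref{lem:smwhnumerics}(1) gives $2r \ge n + 1 > n - 1$; hence $\mcI \to H^0(Z, \mcL^p)$ is not dominant and a general $f$ has no critical point along $\Delta_Z$.

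The hard part will be the bookkeeping of the linear part under restriction from $\mbP$ to $Z$: one must be sure that the $r+1$ linear forms supplied by Lemma \ref{lem:restP}(2) descend to at least $r$ \emph{independent} linear forms on $Z$, rather than degenerating into the constant or the normal direction to $Z$ at $\msq$. Together with verifying that $\Delta_Z$ genuinely misses all coordinate points (so that Lemma \ref{lem:restP}(2) is available at every point of $\Delta_Z$), this is where the care lies; the remaining dimension count is routine once $2r \ge n + 1$ is in hand.
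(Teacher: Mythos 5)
Your proposal is correct, but it closes the key estimate by a genuinely different route than the paper. The paper's own proof exploits the divisibility $a_{\Pi} \mid d$ (Condition \ref{cd2}.(2)): by Lemma \ref{lem:restP2}, the map $\rest^2_{\msp}$ is surjective at \emph{every} point $\msp \in Z$ (every point of $Z$ is a smooth point of $\mbP$, since $Z$ misses the coordinate points), so the sections having a critical point at a given $\msp \in \Delta_Z$ form a subspace of codimension $n$, and the dimension count closes with the trivial bound $\dim \Delta_Z < n$ --- no Fano numerics enter at all. You instead use only coprimality of $a_i,a_j$ together with $d > (a_i-1)(a_j-1)$ via Lemma \ref{lem:restP}.(2), which yields the much weaker codimension bound $r$ (after sacrificing one linear form to the conormal direction of $Z$), and you then must compensate with $2r \ge n+1$, which you correctly extract from Condition \ref{cd2}.(3) (Fano) and Lemma \ref{lem:smwhnumerics}.(1). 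This is precisely the dichotomy built into Lemma \ref{lem:crigensm}.(2): your argument runs along case (ii), the paper's along case (i). Your route buys generality --- it would survive if $d$ satisfied only the degree and coprimality bounds rather than full divisibility by $a_{\Pi}$ --- at the cost of importing the Fano constraint; the paper's route is shorter and needs nothing beyond the divisibility already guaranteed in this setting. Two points you rightly flag as delicate are genuine but harmless: Lemma \ref{lem:restP}.(2) as stated only bounds $\dim \operatorname{Im} (\rest^2_{\msq})$, so you need its proof (the image contains the classes of $\tilde{x}_0,\dots,\tilde{x}_r$, which are linear forms since $\msq \in \Delta$) to know that these classes survive restriction as linear forms on $Z$; and even under the pessimistic reading (a constant plus only $r$ linear forms, hence codimension $r-1$ on $Z$), your count still closes because $2r \ge n+1$.
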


\begin{proof}
By Conditions (1) and (2), we can apply Lemma \ref{lem:restP2} and conclude that the image of the restriction map
\[
H^0 (Z, \mcL^p) \to \mcL^p \otimes (\mcO_{Z,\msp}/\mfm_{\msp}^2)
\]
is surjective for any point $\msp \in Z$.
It follows that the sections in $H^0 (Z,\mcL^p)$ having a critical point at a given point $\msp \in \Delta_Z$ form a subspace of codimension at least $n$.
Since $\dim \Delta_Z < n$, the proof is completed by counting dimensions:
\[
\dim H^0 (Z, \mcL^p) - n + \dim \Delta_Z < \dim H^0 (Z, \mcL^p).
\]
This shows that a general $f \in H^0 (Z,\mcL^p)$ does not have a critical point along $\Delta_Z$.
\end{proof}

\begin{Lem} \label{lem:admcr2}
A general $f \in H^0 (Z,\mcL^p)$ has only admissible critical points on $Z$.
\end{Lem}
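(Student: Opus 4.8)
The plan is to combine the absence of critical points along $\Delta_Z$ furnished by Lemma \ref{lem:noncritdelta2} with the surjectivity criterion recalled in Remark \ref{rem:admcr}, applied on the complementary locus $Z \setminus \Delta_Z$. Recall from Remark \ref{rem:admcr} that whenever the fourth-order restriction map $\rest^4_{\msq}$ is surjective at a point $\msq$, the sections admitting a non-admissible critical point at $\msq$ form a closed subset of codimension at least $\dim Z + 1$ in $H^0 (Z, \mcL^p)$; feeding this into the incidence variety over the locus of points where the surjectivity holds and counting dimensions then shows that a general $f$ has no non-admissible critical point there.

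First I would treat the open locus $Z \setminus \Delta_Z$. Since $\Delta = (x_0 = \cdots = x_r = 0)$, any point of $Z \setminus \Delta_Z$ has a nonvanishing coordinate of weight one, so this locus is covered by the charts $U_i \cap Z$ with $i \le r$, i.e.\ with $a_i = 1$. As $p \ge 2$, Condition \ref{cd2}.(3) gives $d \ge 2 p a_{\max} \ge 4 a_{\max} \ge 3 a_{\max}$, so Lemma \ref{lem:restP}.(1) with $l = 3$ and $c = d$ shows that $\rest^4_{\msp} \colon H^0 (\mbP, \mcO_{\mbP} (d)) \to \mcO_{\mbP} (d) \otimes (\mcO_{\mbP, \msp}/\mfm_{\msp}^4)$ is surjective for every $\msp \in U_i$ with $a_i = 1$. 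Because $Z$ is smooth by Lemma \ref{lem:Zsm2} and each such $\msp$ is a smooth point of $\mbP$, Remark \ref{rem:restZ} transfers this to the surjectivity of $\rest^4_{Z, \msp}$ for all $\msp \in Z \setminus \Delta_Z$. Applying the dimension count of Remark \ref{rem:admcr} over the quasi-projective base $Z \setminus \Delta_Z$, whose dimension is at most $\dim Z$, I conclude that a general $f$ has only admissible critical points on $Z \setminus \Delta_Z$.

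It then remains only to exclude critical points lying on $\Delta_Z$, and this is precisely the content of Lemma \ref{lem:noncritdelta2}: a general $f$ has no critical point along $\Delta_Z$ whatsoever, hence in particular no non-admissible one. Combining the two loci, a general $f \in H^0 (Z, \mcL^p)$ has only admissible critical points on all of $Z$.

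The main obstacle is that the fourth-order surjectivity required by Remark \ref{rem:admcr} is unavailable along $\Delta_Z$, where all weight-one coordinates vanish and the charts $U_i$ with $a_i = 1$ cease to cover, so $\Delta_Z$ genuinely must be handled by a separate argument. Fortunately only the weaker second-order surjectivity of Lemma \ref{lem:restP2} is needed there, since the resulting codimension bound of at least $n$ for critical sections, together with $\dim \Delta_Z < n$, already forces a general $f$ to carry no critical point on $\Delta_Z$, as executed in Lemma \ref{lem:noncritdelta2}.
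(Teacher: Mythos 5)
Your proposal is correct and follows essentially the same route as the paper: cover $Z \setminus \Delta_Z$ by the weight-one charts, use Condition \ref{cd2}.(3) together with Lemma \ref{lem:restP}.(1) to get surjectivity of $\rest^4_{\msp}$ and then Remark \ref{rem:admcr} to handle that locus, and invoke Lemma \ref{lem:noncritdelta2} to rule out critical points along $\Delta_Z$. The only difference is cosmetic: you make explicit the transfer of surjectivity from $\mbP$ to $Z$ via Remark \ref{rem:restZ}, which the paper leaves implicit.
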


\begin{proof}
We set $U_i = (x_i \ne 0) \subset \mbP$ and $U = U_0 \cup \cdots \cup U_r$.
Note that $U = \mbP \setminus \Delta$.
Since $d \ge 2 p a_{\max} > 3 a_{\max}$ by Condition \ref{cd2}.(3), the restriction map
\[
\rest^4_{\msp} \colon H^0 (Z, \mcL^p) \to \mcL^p \otimes (\mcO_{Z,\msp}/\mfm_{\msp}^4)
\]
is surjective for any point $\msp \in Z \cap U$ by Lemma \ref{lem:restP}.(1).
By Remark \ref{rem:admcr}, a general $f \in H^0 (Z, \mcL^p)$ has only admissible critical point on $U$.
This, together with Lemma \ref{lem:noncritdelta2}, completes the proof.
\end{proof}

\begin{Prop} \label{prop:exisresol2}
The variety $X$ admits a universally trivial resolution $\varphi \colon \tilde{X} \to X$ of singularities such that $\tilde{X}$ is not universally $\CH_0$-trivial.
\end{Prop}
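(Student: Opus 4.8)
The plan is to read the resolution off directly from Lemma~\ref{lem:covtech} and then to exhibit a nonzero global $(n-1)$-form on $\tilde{X}$, the existence of which is governed precisely by the numerical hypothesis of Theorem~\ref{mainthm2}. First I would apply Lemma~\ref{lem:covtech} to the covering $\pi \colon X \to Z$ obtained by taking the $p$th roots of $f \in H^0(Z, \mcL^p)$, with $\mcL = \mcO_Z(b)$ and $m = p$: this is legitimate because $Z$ is smooth by Lemma~\ref{lem:Zsm2} and $f$ has only admissible critical points on $Z$ by Lemma~\ref{lem:admcr2}. The lemma then furnishes a universally $\CH_0$-trivial resolution $\varphi \colon \tilde{X} \to X$ together with an invertible subsheaf $\mcM \subset (\Omega_X^{n-1})^{\vee\vee}$ satisfying $\mcM \cong \pi^*(\omega_Z \otimes \mcL^p)$ and $\varphi^* \mcM \inj \Omega_{\tilde{X}}^{n-1}$. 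This already produces the required resolution, so it remains only to prove that $\tilde{X}$ is not universally $\CH_0$-trivial.

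For the latter I would use that a smooth projective universally $\CH_0$-trivial variety admits no nonzero global differential form of positive degree (the fact underlying Lemma~\ref{lem:Totaro}); since $n-1 > 0$ as $n \ge 3$, it suffices to show $H^0(\tilde{X}, \Omega_{\tilde{X}}^{n-1}) \ne 0$. The inclusion $\varphi^* \mcM \inj \Omega_{\tilde{X}}^{n-1}$ reduces this to finding a nonzero section of $\varphi^* \mcM$. As $\varphi$ is proper birational onto $X$, which is normal because its singularities are isolated (they lie over admissible critical points and $\dim X = n \ge 3$), a nonzero section of $\mcM$ pulls back to a nonzero section of $\varphi^* \mcM$; and as $\pi$ is finite, a nonzero section of $\omega_Z \otimes \mcL^p$ pulls back to a nonzero section of $\mcM \cong \pi^*(\omega_Z \otimes \mcL^p)$. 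Hence everything is reduced to the nonvanishing of $H^0(Z, \omega_Z \otimes \mcL^p)$.

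Finally I would compute this line bundle. Since $Z = (g = 0) \subset \mbP$ is a quasi-smooth well formed hypersurface of degree $b$, adjunction gives $\omega_Z \cong \mcO_Z(b - a_{\Sigma})$, while $\mcL^p = \mcO_Z(pb) = \mcO_Z(d)$; therefore $\omega_Z \otimes \mcL^p \cong \mcO_Z(e)$ with $e = (p+1)b - a_{\Sigma}$. The hypothesis $d = pb \ge \frac{p}{p+1} a_{\Sigma}$ is exactly the inequality $e \ge 0$, and Condition~\ref{cd2}.(3) gives $d < a_{\Sigma}$, i.e.\ $e < b = \deg g$. Moreover some $a_i$ equals $1$ here, since otherwise all weights would be $\ge 2$ and mutually coprime, forcing $a_{\Pi} \ge a_{\Sigma}$, which contradicts $a_{\Pi} \le d < a_{\Sigma}$. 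Consequently the degree-$e$ monomial $x_0^e$ is not divisible by $g$ for degree reasons, and so it restricts to a nonzero element of $H^0(Z, \mcO_Z(e))$, completing the chain of reductions.

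The step I expect to require the most care is the conversion of a nonzero global $(n-1)$-form into a failure of universal $\CH_0$-triviality: Lemma~\ref{lem:Totaro} is phrased in terms of stable rationality, so I would instead invoke the sharper statement behind it, namely that a smooth projective universally $\CH_0$-trivial variety has $H^0(\Omega^i) = 0$ for every $i > 0$, and I would check that this is being applied correctly over the algebraically closed field $\K$ of characteristic $p$ on which $\tilde{X}$ lives, so that the form produced above genuinely obstructs the universal $\CH_0$-triviality of $\tilde{X}$.
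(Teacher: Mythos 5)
Your proposal is correct and follows essentially the same route as the paper's proof: apply Lemma \ref{lem:covtech} (via Lemma \ref{lem:admcr2}) to get the universally $\CH_0$-trivial resolution $\varphi$ and the subsheaf $\mcM \cong \pi^*(\omega_Z \otimes \mcL^p)$, then use the inequality $d \ge \frac{p}{p+1} a_{\Sigma}$ to produce a nonzero section of $\varphi^*\mcM \inj \Omega_{\tilde{X}}^{n-1}$ and conclude by the differential-form obstruction to universal $\CH_0$-triviality. Your extra care (exhibiting the explicit monomial section after checking some $a_i = 1$, and invoking the sharper $\CH_0$-form of Lemma \ref{lem:Totaro} rather than its stable-rationality phrasing) only fills in details the paper leaves implicit.
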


\begin{proof}
Let $\mcM$ be the invertible subsheaf of $(\Omega_X^{n-1})^{\vee \vee}$ associated to the covering $\pi \colon X \to Z$.
We have an isomorphism
\[
\mcM \cong \pi^* (\omega_Z \otimes \mcL^p) \cong \mcO_X \left(b - a_{\Sigma} + d \right).
\]

By Lemmas \ref{lem:admcr2} and \ref{lem:covtech}, $X$ admits a universally $\CH_0$-trivial resolution $\varphi \colon \tilde{X} \to X$ such that $\varphi^*\mcM \inj \Omega_{\tilde{X}}^{n-1}$.
We see that $H^0 (X, \mcM) \ne 0$ since
\[
b - a_{\Sigma} + d = \frac{p+1}{p} d - a_{\Sigma} \ge 0
\]
by Condition \ref{cd2}.(5).
This shows that $H^0 (\tilde{X},\Omega_{\tilde{X}}^{n-1}) \ne 0$, hence $\tilde{X}$ is not universally $\CH_0$-trivial by Lemma \ref{lem:Totaro}.
\end{proof}

Theorem \ref{mainthm2} follows from Remark \ref{rem:spargumentThm2} and Proposition \ref{prop:exisresol2}.

\section{Proof of Theorem \ref{mainthm3}} \label{sec:pfthm3}

The aim of this section is to prove Theorem \ref{mainthm3} following the Totaro's degeneration (to a reducible variety) which will be explained below.
We assume that $a_0,\dots,a_{n+1}, d$ and $e = d/a_{\Pi}$ are positive integers satisfying the assumptions of Theorem \ref{mainthm3}.

\begin{Lem} \label{lem:known3}
\emph{Theorem \ref{mainthm3}} holds true if in addition one of the following is satisfied.
\begin{enumerate}
\item $d \ge a_{\Sigma}$.
\item $e = 3$.
\end{enumerate}
\end{Lem}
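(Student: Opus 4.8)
The plan is to dispose of both cases by exhibiting a nonzero global top form, so that Lemma \ref{lem:Totaro} applies directly. For case (1) I would argue exactly as in Lemma \ref{lem:known2}.(1): since $X$ is a smooth well formed weighted hypersurface, adjunction gives $\omega_X \cong \mcO_X(d - a_{\Sigma})$, and the hypothesis $d \ge a_{\Sigma}$ forces $d - a_{\Sigma} \ge 0$. Hence $\Omega_X^n = \omega_X$, with $n = \dim X \ge 3$, carries a nonzero global section, so $H^0(X, \Omega_X^n) \ne 0$ and $X$ is not stably rational by Lemma \ref{lem:Totaro}.

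For case (2) the key point is an arithmetic collapse of the hypothesis of Theorem \ref{mainthm3}. When $e = 3$ we have $d = 3 a_{\Pi}$, i.e. $a_{\Pi} = d/3$. Substituting this into the standing inequality $d \ge a_{\Pi} + \frac{2}{3} a_{\Sigma}$ yields
\[
d \ge \tfrac{1}{3} d + \tfrac{2}{3} a_{\Sigma},
\]
which rearranges to $\frac{2}{3} d \ge \frac{2}{3} a_{\Sigma}$, that is $d \ge a_{\Sigma}$. Thus, under the hypotheses of Theorem \ref{mainthm3}, the case $e = 3$ already satisfies the hypothesis of part (1), and the conclusion follows from (1). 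One could instead invoke Theorem \ref{mainthm2}: for $e = 3$ the smallest prime factor is $p = 3$ and $d \ge a_{\Sigma} > \frac{3}{4} a_{\Sigma} = \frac{p}{p+1} a_{\Sigma}$; but reducing to part (1) keeps the argument self-contained.

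I expect no serious obstacle here: the purpose of the lemma is precisely to clear away the two boundary regimes, so that the main degeneration of Section \ref{sec:pfthm3} may proceed under the remaining assumptions $e \ge 5$ and $d < a_{\Sigma}$. The only step meriting a word of care is the effectivity claim in part (1), namely $H^0(X, \omega_X) \ne 0$ in the strict range $d - a_{\Sigma} > 0$; I would justify it exactly as it is used implicitly in Lemma \ref{lem:known2}.(1), via the identification of $H^0(X, \omega_X)$ with the degree $(d - a_{\Sigma})$ graded piece of the coordinate ring $\K[x_0,\dots,x_{n+1}]$.
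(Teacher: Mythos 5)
Your proposal is correct and follows essentially the same route as the paper: case (1) is handled by the effectivity of $\omega_X \cong \mcO_X(d - a_{\Sigma})$ together with Lemma \ref{lem:Totaro}, and case (2) is reduced to case (1) by substituting $a_{\Pi} = d/3$ into the inequality $d \ge a_{\Pi} + \frac{2}{3} a_{\Sigma}$, exactly as in the paper. The only difference is that you spell out the adjunction/graded-piece justification that the paper leaves implicit ("clearly not stably rational"), which is a harmless elaboration.
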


\begin{proof}
Let $W = W_d \subset \mbP_{\mbC} (a_0,\dots,a_{n+1})$ be a very general weighted hypersurface of degree $d$.
If (1) is satisfied, then $W$ is clearly not stably rational.
Suppose that $e = 3$, i.e.\ $d = 3 a_{\Pi}$.
Then the condition $d \ge a_{\Pi} + \frac{2}{3} a_{\Sigma}$ is equivalent to $d \ge a_{\Sigma}$, hence $W$ is not stably rational.
\end{proof}

In the following we assume that we are not in (1) or (2) of Lemma \ref{lem:known3} so that $n, a_0,\dots,a_{n+1}, d$ and $e$ satisfy the following.

\begin{Cond} \label{cd3}
\begin{enumerate}
\item $a_0,\dots,a_{n+1}$ are mutually coprime to each other.
\item $d < a_{\Sigma}$.
\item $e = d/a_{\Pi} \ge 5$ is odd.
\item $d \ge a_{\Pi} + \frac{2}{3} a_{\Sigma}$.
\end{enumerate}
\end{Cond}

\begin{Rem} \label{rem:spargumentThm3}
Let $W$ be a very general weighted hypersurface of degree $d = e a_{\Pi}$ in $\mbP_{\mbC} := \mbP_{\mbC} (a_0,\dots,a_{n+1})$.
We can degenerate $W$ to a union of a very general weighted hypersurfaces $X$ and $G$ in $\mbP_{\mbC}$ of degree $(e-1) a_{\Pi}$ and $a_{\Pi}$, respectively.
Note that $e-1$ is even.
Let $Y$ be the degeneration of $X$ over an algebraically closed field $\K$ of characteristic $2$ obtained as in Remark \ref{rem:spargumentThm2}, which is a purely  inseparable double cover of a very general weighted hypersurface $Z$ in $\mbP_{\K} := \mbP_{\K} (a_0,\dots,a_{n+1})$ of degree $(e-1) a_{\Pi}/2$.
Let $H$ be a very general weighted hypersurface of degree $a_{\Pi}$ in $\mbP_{\K}$ to which $G$ degenerates.
We set $b = (e-1) a_{\Pi}/2$ and write
\[
\begin{split}
Y &= (y^2 + f = g = 0) \subset \tilde{\mbP}_{\K} := \mbP_{\K} (a_0,\dots,a_{n+1},b), \\
Z &= (g = 0) \subset \mbP_{\K} = \mbP_{\K} (a_0,\dots,a_{n+1}), \\
H &= (h = 0) \subset \mbP_{\K},
\end{split}
\]
where $x_0,\dots,x_{n+1}, y$ are homogeneous coordinates of degree $a_0,\dots,a_{n+1}, b$, respectively, and $f, g, h \in \K [x_0,\dots,x_{n+1}]$ are very general homogeneous polynomials of degree $2 b = (e-1)a_{\Pi}$, $b$, $a_{\Pi}$, respectively, and set $\mcL = \mcO_Z (b)$.
Let $\pi \colon Y \to Z$ be the natural morphism which is obtained by taking the roots of $f \in H^0 (Z, \mcL^2)$.
We set 
\[
\begin{split}
Y_H &:= Y \cap \pi^{-1} (H) = (y^2 + f = g = h = 0) \subset \tilde{\mbP}_{\K}, \\
Z_H &:= Z \cap H = (g = h = 0) \subset \mbP_{\K}.
\end{split}
\]
Note that $\pi|_{Y_H} \colon Y_H \to Z_H$ is the morphism obtained by taking the roots of $f|_{Z_H} \in H^0 (Z_H, (\mcL|_{Z_H})^{2})$.

Now suppose that the following are satisfied.
\begin{enumerate}
\item $Z$ is smooth, the section $f \in H^0 (Z, \mcL^2)$ does not have a critical point on $Y_H \subset Y$ and has only admissible critical points on $Z$.
\item $Z_H$ is smooth and the section $f|_{Z_H} \in H^0 (Z_H, (\mcL|_{Y_H})^2)$ has only admissible critical points on $Z_H$.
\item The invertible subsheaf $\mcM$ of $(\Omega_Y^{n-1})^{\vee \vee}$ associated to $\pi \colon Y \to Z$ has a non-zero global section.
\item $H^0 (Z_H, \omega_{Z_H}) = 0$.
\end{enumerate}
Let $\varphi \colon \tilde{Y} \to Y$ be the universally $\CH_0$-trivial resolution of $Y$ as in Lemma \ref{lem:covtech} (note that $\varphi$ is obtained by  blowing-up each singular point of $Y$).
Here the existence of $\varphi$ follows from (1).
Replacing $\tilde{Y}$ by a further blowing up (at each singular point of $Z_H$ which is contained in the smooth locus of $Y$ by (2)) model , we may assume that the restriction $\varphi|_{\tilde{Z}_H} \colon \tilde{Z}_H \to Z_H$, where $\tilde{Z}_H$ is the proper transform of $Z_H$ by $\varphi$, is the universally $\CH_0$-trivial resolution of $Z_H$. 
Under the above assumptions, it follows from the argument in \cite{Totaro} in pages 887 and 888 that universal $\CH_0$-triviality of $W$ implies that the restriction
\[
H^0 (\tilde{Y}, \Omega_{\tilde{Y}}^{n-1}) \to H^0 (\tilde{Z}_H, \Omega_{\tilde{Z}_H}^{n-1})
\]
is injective.
By (1) and (3), we have $0 \ne H^0 (\tilde{Y}, \varphi^* \mcM) \inj H^0 (\tilde{Y}, \Omega_{\tilde{Y}}^{n-1})$.
By (4), $H^0 (\tilde{Z}_H, \Omega_{\tilde{Z}_H}^{n-1}) = H^0 (Z_H, \omega_{Z_H}) = 0$.
This is a contradiction.
Therefore, for the proof of Theorem \ref{mainthm3}, it is enough to show that (1), (2), (3) and (4) are satisfied.
\end{Rem}

We keep the same notation and setting as in Remark \ref{rem:spargumentThm3}.

\begin{Lem} \label{lem:verif1thm3}
$Z$ is smooth, the section $f \in H^0 (Z, \mcL^{2})$ does not have a critical point on $Z_H \subset Z$ and has only admissible critical points on $Z$. 
\end{Lem}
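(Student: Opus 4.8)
The plan is to mirror the three lemmas \ref{lem:Zsm2}, \ref{lem:noncritdelta2} and \ref{lem:admcr2} of Section \ref{sec:pfthm2}, adding one extra dimension count to handle $Z_H$. First I would record the numerics forced by Condition \ref{cd3}. Since $e$ is odd, $b = (e-1)a_{\Pi}/2$ is a positive integral multiple of $a_{\Pi}$, so both $\deg g = b$ and $\deg f = 2b = (e-1)a_{\Pi}$ are divisible by $a_{\Pi}$; and since $e \ge 5$ we get $\deg f = (e-1)a_{\Pi} \ge 4 a_{\max} > 3 a_{\max}$. Smoothness of $Z$ is then immediate from Lemma \ref{lem:crismcharp}.(1), exactly as in Lemma \ref{lem:Zsm2}. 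Because $a_{\Pi} \mid \deg g$, a general $g$ has $g(\msp_i) \neq 0$ at every singular point $\msp_i$ of $\mbP$, so $Z$, and a fortiori $Z_H$, lies in the smooth locus of $\mbP$; this is what lets me transfer surjectivity of restriction maps from $\mbP$ to $Z$ through Remark \ref{rem:restZ}.

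For admissibility on all of $Z$ I would argue exactly as in Lemma \ref{lem:admcr2}, splitting $Z$ along $\Delta$. On $U = U_0 \cup \cdots \cup U_r = \mbP \setminus \Delta$ the bound $\deg f > 3 a_{\max}$ makes $\rest^4_{\msp}$ surjective for every $\msp \in Z \cap U$ by Lemma \ref{lem:restP}.(1), so Remark \ref{rem:admcr} yields that a general $f$ has only admissible critical points there. Along $\Delta_Z := \Delta \cap Z$ I would instead show $f$ has no critical point at all, by the dimension count of Lemma \ref{lem:noncritdelta2}: divisibility of $\deg f$ by $a_{\Pi}$ gives, via Lemma \ref{lem:restP2} and Remark \ref{rem:restZ}, surjectivity of $\rest^2_{Z,\msp}$, so the sections critical at a fixed point form a subspace of codimension $n = \dim Z$, while $\dim \Delta_Z < n$ (a general $g$ does not contain $\Delta$, and indeed $2r \ge n+1$ by Lemma \ref{lem:smwhnumerics}.(1)). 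Together these give admissibility of all critical points of a general $f$ on $Z$.

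The only genuinely new ingredient is the absence of critical points on $Z_H$, which I would obtain by the same incidence argument. At each $\msp \in Z_H \subset Z$ lying in the smooth locus of $\mbP$, Lemma \ref{lem:restP2} and Remark \ref{rem:restZ} again make $\rest^2_{Z,\msp}$ surjective, so the locus of sections critical at $\msp$ has codimension $n$ in $H^0(Z,\mcL^2)$; since $\dim Z_H = n-1 < n$, the incidence variety $\{(f,\msp) : \msp \in Z_H,\ f \text{ critical at } \msp\}$ has dimension at most $\dim H^0(Z,\mcL^2) - 1$ and hence does not dominate $H^0(Z,\mcL^2)$, so a general $f$ is critical-point-free on $Z_H$. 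As $Z$ and $Z_H$ depend only on the independent general choices of $g$ and $h$, all three assertions are arranged at once by fixing general $g, h$ and then choosing $f$ general. I expect no real obstacle here: everything reduces to the restriction-map surjectivities already proved in Section \ref{sec:smwhyp} plus the inequality $\dim Z_H < \dim Z$. The two points to watch are simply that ``$e$ odd'' is exactly what makes $b$, and hence $\deg g$, divisible by $a_{\Pi}$, and that ``$e \ge 5$'' is what pushes $\deg f$ past $3 a_{\max}$ so that Remark \ref{rem:admcr} applies.
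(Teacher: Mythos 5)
Your proposal is correct and follows essentially the same route as the paper: smoothness of $Z$ via Lemma \ref{lem:crismcharp} (using $a_{\Pi} \mid b$, which is where $e$ odd enters), absence of critical points on $\Delta_Z$ and on $Z_H$ by the codimension-$n$ dimension count of Lemma \ref{lem:noncritdelta2} applied to proper closed subvarieties of $Z$, and admissibility away from $\Delta$ via surjectivity of $\rest^4_{\msp}$ from Lemma \ref{lem:restP}.(1) and Remark \ref{rem:admcr}, using $(e-1)a_{\Pi} \ge 3a_{\max}$. The paper simply phrases the two dimension counts as one statement (no critical points along the proper closed subvariety $\Delta_Z \cup Z_H$), which is exactly your argument run once instead of twice.
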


\begin{proof}
Recall that the weighted hypersurface $Z \subset \mbP$ is of degree $b = (e-1)a_{\Pi}/2$ and $b$ is divisible by $a_{\Pi}$.
By Lemma \ref{lem:crismcharp}, $Z$ is smooth.
Recall also that $\mcL^2 = \mcO_Z ((e-1) a_{\Pi})$ and $e-1 \ge 4$.
By the same argument as in the proof of Lemma \ref{lem:noncritdelta2}, we conclude that a general $f \in H^0 (Z, \mcL^2)$ does not have a critical point along a given proper closed subvariety of $Z$.
Thus $f$ does not have a critical point along $\Delta_Z \cup Z_H$.
Then, since $(e-1) a_{\Pi} \ge 3 a_{\max}$, we can apply Lemma \ref{lem:restP}.(1) and conclude that $f$ has only admissible critical points on $Z$ (by the same argument as in the proof of Lemma \ref{lem:admcr2}).
\end{proof}

\begin{Lem} \label{lem:verif2thm3}
$Z_H$ is smooth and the section $f|_{Z_H} \in H^0 (Z_H, (\mcL|_{Z_H})^2)$ has only admissible critical points on $Z_H$.
\end{Lem}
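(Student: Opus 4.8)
The plan is to follow the proof of Lemma \ref{lem:verif1thm3} verbatim, but working on the lower-dimensional complete intersection $Z_H$ in place of $Z$, and transferring every surjectivity statement from $\mbP$ down to $Z_H$ by means of Remark \ref{rem:restZ}. First I would settle smoothness. Since $Z_H = (g = h = 0)$ is a general weighted complete intersection of type $(b, a_{\Pi})$ and both degrees are divisible by $a_{\Pi}$ — indeed $b = (e-1)a_{\Pi}/2 = \frac{e-1}{2} a_{\Pi}$ is an integral multiple of $a_{\Pi}$ because $e-1$ is even by Condition \ref{cd3}.(3) — Lemma \ref{lem:crismcharp}.(2) applies directly and gives that $Z_H$ is smooth. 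Moreover, as $\deg g = b$ is divisible by each $a_i$, a general $g$ does not vanish at any coordinate point $\msp_i$ with $a_i > 1$, so $Z$, and hence $Z_H$, is contained in the smooth locus of $\mbP$ (the same observation used in the proof of Lemma \ref{lem:crismcharp}.(1)). This is what allows me to invoke Remark \ref{rem:restZ} at every point of $Z_H$, since there both $Z_H$ and $\mbP$ are smooth.

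Next I would analyze the critical points of $f|_{Z_H}$, splitting $Z_H = (Z_H \cap U) \cup (Z_H \cap \Delta)$, where $U = \mbP \setminus \Delta$. On $Z_H \cap \Delta$ I would show that a general $f$ has no critical point whatsoever. Since $2b = (e-1) a_{\Pi}$ is divisible by $a_{\Pi}$, Lemma \ref{lem:restP2} gives surjectivity of $\rest^2_{\msp}$ on $\mbP$ at every point of $Z_H$, and therefore $\rest^2_{Z_H, \msp}$ is surjective by Remark \ref{rem:restZ}. Consequently the sections with a critical point at a fixed $\msp \in Z_H \cap \Delta$ form a subspace of codimension $\dim Z_H = n-1$ in the induced system, while $\dim (Z_H \cap \Delta) < \dim Z_H$; the dimension count of Lemma \ref{lem:noncritdelta2} then rules out critical points along $Z_H \cap \Delta$. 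On $Z_H \cap U$ I would instead use Lemma \ref{lem:restP}.(1): since $2b = (e-1) a_{\Pi} \ge 4 a_{\Pi} \ge 3 a_{\max}$ (using $e \ge 5$ and $a_{\Pi} \ge a_{\max}$), the map $\rest^4_{\msp}$ is surjective on $\mbP$ at every point of $U$, hence $\rest^4_{Z_H, \msp}$ is surjective on $Z_H \cap U$ by Remark \ref{rem:restZ}. The genericity argument of Remark \ref{rem:admcr}, applied on the $(n-1)$-dimensional $Z_H$ with the linear system induced from $H^0 (\mbP, \mcO_{\mbP} (2b))$, then shows that a general $f|_{Z_H}$ has only admissible critical points on $Z_H \cap U$. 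Combining the two cases yields that $f|_{Z_H}$ has only admissible critical points on all of $Z_H$.

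The step requiring the most care, and the one I would flag as the main obstacle, is the descent of genericity. The section $f$ is very general on $\mbP$, not on $Z_H$, so one must verify that the image of $H^0 (\mbP, \mcO_{\mbP} (2b))$ in $H^0 (Z_H, (\mcL|_{Z_H})^2)$ is rich enough for the codimension estimates of Remark \ref{rem:admcr} to hold. This is precisely what Remark \ref{rem:restZ} supplies: surjectivity of the $2$-jet and $4$-jet evaluation maps on $Z_H$ is inherited from the corresponding surjectivity on $\mbP$, so the codimensions of the loci of critical and of non-admissible critical points are the same whether computed in $H^0 (Z_H, (\mcL|_{Z_H})^2)$ or in the smaller induced subsystem. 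In particular the parity case distinctions ($n-1$ even or odd, with $p = 2$ and $4 \nmid 2$) in the definition of an admissible critical point are absorbed uniformly into the $\rest^4$-surjectivity input of Remark \ref{rem:admcr}, so they need not be treated separately. Apart from this point, the argument is entirely parallel to Lemma \ref{lem:verif1thm3}.
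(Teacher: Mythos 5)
Your proposal is correct and takes essentially the same route as the paper: smoothness of $Z_H$ from Lemma \ref{lem:crismcharp}.(2) applied to the complete intersection of type $(b,a_{\Pi})$, and admissibility of critical points from the bound $2b=(e-1)a_{\Pi}\ge 3a_{\max}$ via Lemma \ref{lem:restP}.(1), Remark \ref{rem:restZ} and the dimension counts of Remark \ref{rem:admcr} and Lemma \ref{lem:noncritdelta2}. You merely make explicit two steps that the paper's terse proof leaves implicit (by tacit reference to the argument of Lemma \ref{lem:admcr2}), namely the separate treatment of $Z_H\cap\Delta$ and the descent of genericity from $H^0(\mbP,\mcO_{\mbP}(2b))$ to the induced system on $Z_H$.
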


\begin{proof}
The variety $Z_H$ is a general weighted complete intersection of type $(b, a_{\Pi})$ in $\mbP_{\K}$.
By Lemma \ref{lem:crismcharp}, $Z_H$ is smooth.
We have $(\mcL|_{Z_H})^2 \cong \mcO_{Z_H} (2b)$ and $2 b = (e-1) a_{\Pi} \ge 3 a_{\max}$.
Thus we can apply Lemma \ref{lem:restP}.(1) and conclude that $f|_{Z_H} \in H^0 (Z_H, (\mcL|_{Z_H})^2)$ has only admissible critical points on $Z_H$.
\end{proof}

\begin{proof}[Proof of \emph{Theorem \ref{mainthm3}}]
It is enough to show that the conditions (1), (2), (3) and (4) in Remark \ref{rem:spargumentThm3} are satisfied.
Conditions (1) and (2) are already verified in Lemmas \ref{lem:verif1thm3} and \ref{lem:verif2thm3}.
For the invertible sheaf $\mcM \subset (\Omega_Y^{n-1})^{\vee \vee}$, we have an isomorphism
\[
\mcM \cong \pi^* (\omega_Z \otimes \mcL^2) \cong \mcO_Y \left( \frac{3}{2} d - \frac{3}{2} a_{\Pi} - a_{\Sigma} \right).
\]
We have $H^0 (Y, \mcM) \ne 0$ since $d \ge a_{\Pi} + \frac{2}{3} a_{\Sigma}$, and the condition (3) is verified.
Finally, we have
\[
\omega_{Z_H} = \mcO_{Z_H} \left( \frac{e+1}{2} a_{\Pi} - a_{\Sigma} \right).
\]
Since $(e+1)/2 < e$ and $d - a_{\Sigma} < 0$, we have
\[
\frac{e+1}{2} a_{\Pi} - a_{\Sigma} < d - a_{\Sigma} < 0,
\]
and thus the condition (4) is verified.
This completes the proof.
\end{proof}

\section{Proof of Theorem \ref{mainthm1}} \label{sec:pfthm1}

The aim of this section is to prove Theorem \ref{mainthm1}.
Let $n \ge 3$, $a_0,\dots,a_{n+1}$ and $d$ be as in Theorem \ref{mainthm1}.
Since a general weighted hypersurface of degree $d$ in $\mbP_{\mbC} (a_0,\dots,a_{n+1})$ is smooth, well formed and is not a linear cone (see Remark \ref{rem:lincone}), the weights $a_i$ are mutually coprime to each other, $d \ge 2 a_{\max}$ and $d$ is divisible by $a_{\Pi}$.

\begin{Lem} \label{lem:known1}
\emph{Theorem \ref{mainthm1}} holds true if in addition one of the following is satisfied.
\begin{enumerate}
\item $d \ge a_{\Sigma}$.
\item $r \ge n$.
\item $d \ne a_{\Pi}$.
\end{enumerate}
\end{Lem}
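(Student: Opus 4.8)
The plan is to dispose of the three cases by reducing each to a result that is already available, so that no genuinely new geometric input is required. Recall from the discussion preceding the lemma (via Lemma \ref{lem:charactsmoothwh}) that, since $X$ is very general smooth well formed and not a linear cone, the weights $a_0,\dots,a_{n+1}$ are mutually coprime, $a_{\Pi} \mid d$, and $d \ge 2 a_{\max}$; moreover the standing hypothesis $I_X \le a_{\max}$ of Theorem \ref{mainthm1} translates into the lower bound $d \ge a_{\Sigma} - a_{\max}$. I will also use the trivial inequality $a_{\max} \le a_{\Pi}$, which holds because $a_{\max}$ is one of the positive integer factors of $a_{\Pi}$.

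Case (1) I would settle directly from adjunction. As $X$ is quasi-smooth and well formed, $\omega_X \cong \mcO_X(d - a_{\Sigma})$, and $d \ge a_{\Sigma}$ forces $d - a_{\Sigma} \ge 0$, so $H^0(X, \omega_X) \ne 0$. Since $\dim X = n \ge 3$ and $\omega_X = \Omega_X^n$, Lemma \ref{lem:Totaro} immediately yields that $X$ is not stably rational.

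Case (3) is where the only real work lies, and I would reduce it to Theorem \ref{mainthm2}. Because $a_{\Pi} \mid d$ and $d \ne a_{\Pi}$, the integer $e = d/a_{\Pi}$ is at least $2$; letting $p$ be its smallest prime factor gives $e \ge p$, hence $d \ge p\, a_{\Pi}$. It then remains only to verify the numerical hypothesis $d \ge \frac{p}{p+1} a_{\Sigma}$ of Theorem \ref{mainthm2}, and this I would obtain by a dichotomy balancing the two available lower bounds for $d$. If $p\, a_{\Pi} \ge \frac{p}{p+1} a_{\Sigma}$, then $d \ge p\, a_{\Pi}$ already gives the bound. Otherwise $a_{\Pi} < \frac{1}{p+1} a_{\Sigma}$, and then $a_{\max} \le a_{\Pi} < \frac{1}{p+1} a_{\Sigma}$, so that
\[
d \;\ge\; a_{\Sigma} - a_{\max} \;>\; a_{\Sigma} - \frac{1}{p+1} a_{\Sigma} \;=\; \frac{p}{p+1} a_{\Sigma}.
\]
In either case the hypothesis of Theorem \ref{mainthm2} is met and $X$ is not stably rational. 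I expect this inequality bookkeeping---playing $d \ge p\, a_{\Pi}$ off against $d \ge a_{\Sigma} - a_{\max}$---to be the main, and indeed essentially the only substantive, step in the lemma.

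Finally, case (2) I would reduce to case (3). If $r \ge n$ then at most one weight exceeds $1$, so $a_{\Pi}$ is the product of the weights $> 1$ and hence equals $a_{\max}$ (the empty product being $1$ when $r = n+1$). Consequently $d \ge 2 a_{\max} = 2 a_{\Pi} > a_{\Pi}$, so $d \ne a_{\Pi}$ and case (3) applies. As an alternative in the subcase $r = n+1$, where $X = X_d \subset \mbP^{n+1}$, the bound $I_X \le 1$ gives $d \ge n+1$, and $n+1 \ge 2\lceil (n+2)/3 \rceil$ for $n \ge 3$, so Totaro's theorem \cite{Totaro} applies directly.
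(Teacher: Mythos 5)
Your proof is correct, but it organizes the reduction differently from the paper, essentially reversing the logical dependence between cases (2) and (3). The paper proves case (2) first and independently of (3): for $r = n+1$ it quotes Totaro's theorem for hypersurfaces in $\mbP^{n+1}$, and for $r = n$ it degenerates $X$ to a cyclic cover of $\mbP^n_{\mbC}$ and quotes \cite[Theorem 1.1]{Okcyclic}; only then does it treat case (3), where, thanks to (1) and (2), it may assume $d < a_{\Sigma}$ and $r \le n-1$, so that $a_0 \cdots a_n \ge 2$, giving $a_{\Sigma} > d \ge p\, a_{\Pi} \ge 2p\, a_{\max}$ and hence $d - \frac{p}{p+1} a_{\Sigma} \ge \frac{1}{p+1}\left(a_{\Sigma} - (p+1) a_{\max}\right) > 0$. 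You instead prove case (3) unconditionally, via the dichotomy comparing $p\, a_{\Pi}$ with $\frac{p}{p+1} a_{\Sigma}$; this is valid, since it uses only $d \ge p\, a_{\Pi}$, the standing bound $d \ge a_{\Sigma} - a_{\max}$ from the hypothesis of Theorem \ref{mainthm1}, and $a_{\max} \le a_{\Pi}$, and it needs no restriction on $r$ nor the assumption $d < a_{\Sigma}$. You then derive case (2) from case (3), using that $r \ge n$ forces $a_{\Pi} = a_{\max}$, whence $d \ge 2 a_{\max} > a_{\Pi}$. Both routes are sound and of comparable length. Yours is more self-contained: beyond adjunction it invokes only Theorem \ref{mainthm2} of this paper, so the external results \cite{Totaro} and \cite{Okcyclic} enter only indirectly through the proof of that theorem, and your dichotomy shows the slightly stronger fact that the hypothesis of Theorem \ref{mainthm2} is automatic whenever $I_X \le a_{\max}$ and $d \ne a_{\Pi}$. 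What the paper's route buys is mainly expository: case (2) is settled by directly citing the sharpest known results for hypersurfaces and for cyclic covers, making the relation to the prior literature explicit.
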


\begin{proof}
Let $W$ be a very general weighted hypersurface of degree $d$ in $\mbP_{\mbC} (a_0,\dots,a_{n+1})$.

If we are in case (1), then $W$ is not stably rational since $H^0 (W, \omega_W) \ne 0$.

Suppose that we are in case (2).
If $r = n+1$, i.e.\ $W$ is a hypersurface of degree $d$ in $\mbP^{n+1}$, then the stable non-rationality of $W$ follows from \cite[Theorem 2.1]{Totaro} since the condition $I_W \le a_{\max} = 1$ is equivalent to $d \ge n+1$ which is stronger than $d \ge 2 \lceil (n+2)/3 \rceil$.
If $r = n$, then $W$ can be degenerated to a (degree $d/a_{\max}$) cyclic cover of $\mbP^n_{\mbC}$ branched along a hypersurface of degree $d$.
Then stable non-rationality of $W$ follows from \cite[Theorem 1.1]{Okcyclic} since the condition $I_W \le a_{\max}$ is equivalent to $d \ge n+1$.

Suppose that we are in case (3).
By (1) and (2), we may assume that $d < a_{\Sigma}$ and $r \le n-1$.
We may assume $a_{\max} = a_{n+1}$.
Note that we have $a_0 \cdots a_n \ge 2$.
Let $p$ be the smallest prime number dividing $d/a_{\Pi}$.
Then we have
\[
a_{\Sigma} > d \ge p a_0 \cdots a_n a_{n+1} \ge 2 p a_{n+1}.
\]
By the assumption of Theorem \ref{mainthm1}, we have $I_W \le a_{n+1}$ which is equivalent to $d \ge a_{\Sigma} - a_{n+1}$.
We have
\[
\begin{split}
d  - \frac{p}{p+1} a_{\Sigma} &\ge (a_{\Sigma}-a_{n+1}) - \frac{p}{p+1} a_{\Sigma} = \frac{1}{p+1} (a_{\Sigma} - (p+1) a_{n+1}) \\
&> \frac{p-1}{p+1} a_{n+1} > 0.
\end{split}
\] 
Thus the assumption of Theorem \ref{mainthm2} is satisfied and $W$ is not stably rational.
This completes the proof. 
\end{proof}

In the following we assume that we are in none of the cases (1), (2) and (3) of Lemma \ref{lem:known1} so that $n, a_0,\dots,a_{n+1}$ and $d$ satisfy the following after re-ordering the $a_i$.

\begin{Cond} \label{cd1}
\begin{enumerate}
\item $a_0,\dots,a_{n+1}$ are mutually coprime to each other and $n \ge 3$.
\item $a_{n+1} = \max \{a_0,\dots,a_{n+1}\}$ and $a_n = \max \{a_0,\dots,a_n\} \ge 2$.
\item $3 a_n \le d = a_{\Pi} < a_{\Sigma}$.
\item $2 r > n$.
\item $d \ge \sum_{i=0}^n a_i$.
\end{enumerate}
\end{Cond}

Note that the inequality $3 a_n \le d$ follows from Lemma \ref{lem:smwhnumerics}.(2).
We choose and fix a prime number $p$ which divides $a_n$.
We set $b := a_n a_{n+1}$ and $e := d/b = a_0 \cdots a_{n-1}$.
Note that $d = e b$.

\begin{Rem} \label{rem:spargument1}
A very general smooth well formed weighted hypersurface $W$ of degree $d$ in $\mbP_{\mbC} (a_0,\dots,a_{n+1})$ degenerates to a weighted hypersurface $W'$ in $\mbP_{\mbC} (a_0,\dots,a_{n+1})$ of degree $d$ defined by an equation of the form
\[
x_{n+1}^{e a_n} + x_{n+1}^{(e-1) a_n} f_b + \cdots x_{n+1}^{a_n} f_{(e-1)b} + f_{e b} = 0,
\]
where $f_i \in \mbC [x_0,\dots,x_n]$ is a very general homogeneous polynomial of degree $i$.
The variety $W'$ is a (very) general member of a bese point free linear system, hence $W'$ is smooth by Bertini theorem (cf.\ \cite[Corollary 10.9, Remark 10.9.2]{H}).
By the degeneration theorem \cite[Theorem 2.1]{Voisin}, to prove Theorem \ref{mainthm1}, it is enough to show that $W'$ is not universally $\CH_0$-trivial.
By the specialization theorem \cite[Th\'eor\`eme 1.14]{CTP}, it is then enough to show that a weighted hypersurface $X$ in $\mbP_{\K} (a_0,\dots,a_{n+1})$ defined by an equation of the form
\[
x_{n+1}^{e a_n} + x_{n+1}^{(e-1) a_n} f_b + \cdots x_{n+1}^{a_n} f_{(e-1)b} + f_{e b} = 0,
\]
where $f_i \in \K [x_0,\dots,x_n]$ is a very general homogeneous polynomial of degree $i$, admits a universally $\CH_0$-trivial resolution $\varphi \colon \tilde{X} \to X$ such that $\tilde{X}$ is not universally $\CH_0$-trivial.
\end{Rem}

In the following we work over an algebraically closed field $\K$ of characteristic $p$ unless otherwise specified and let $\tilde{\mbP} := \mbP_{\K} (a_0,\dots,a_{n+1})$ be the weighted projective space with homogeneous coordinates $x_0,\dots,x_{n+1}$ with $\deg x_i = a_i$.
The coordinate $x_{n+1}$ will be distinguished and we denote it as $y = x_{n+1}$.
We define $X$ to be the weighted hypersurface of degree $d$ in $\tilde{\mbP}$ defined by the equation
\[
F := y^{e a_n} + y^{(e-1)a_n} f_b + y^{(e-2) a_n} f_{2b} + \cdots + y^{a_n} f_{(e - 1) b} + f_{eb} = 0,
\]
where $f_i \in \K [x_0,\dots,x_n]$ is a general homogeneous polynomial of degree $i$.
Let 
\[
\mbP := \mbP_{\K} (a_0,\dots,a_n, b)
\] 
be the weighted projective space with homogeneous coordinates $x_0,\dots,x_n$ and $z$ with $\deg x_i = a_i$ and $\deg z = b$, and let $Z$ be the weighted hypersurface of degree $d$ in $\mbP$ defined by
\[
G := z^e + z^{e-1} f_b + z^{e-2} f_{2 b} + \cdots + z f_{(e-1)b} + f_{e b} = 0.
\]
Note that $Z$ is a general weighted hypersurface of degree $d$ in $\mbP$.
The restriction of the natural morphism
\[
\tilde{\mbP} \to \mbP, 
\quad (x_0\!:\!\cdots\!:\!x_n\!:\!y) \mapsto (x_0\!:\!\cdots\!:\!x_n\!:\!y^{a_n})
\]
to $X$ is denoted by $\pi \colon X \to Z$.
We set 
\[
\begin{split}
\Gamma_Z &:= (x_0 = \cdots = x_{n-1} = 0) \cap Z \subset \mbP, \\
Z^{\circ} &:= Z \setminus \Gamma_Z, \\
\Gamma_X &:= \pi^{-1} (\Xi) = (x_0 = \cdots = x_{n-1} = 0) \cap X, \\
X^{\circ} &:= \pi^{-1} (Z^{\circ}) = X \setminus \Gamma_X.
\end{split}
\]

\begin{Lem} \label{lem:smZ1}
$Z^{\circ}$ is smooth.
\end{Lem}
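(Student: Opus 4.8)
The plan is to obtain the smoothness of $Z^{\circ}$ as an immediate application of Lemma \ref{lem:crismcharp2} to the weighted projective space $\mbP = \mbP_{\K}(a_0,\dots,a_n,b)$, regarded as carrying the $n+2$ weights $a_0,\dots,a_n,b$, where $b = a_n a_{n+1}$ plays the role of the $(n+1)$st weight. The locus $(x_0 = \cdots = x_{n-1} = 0)$ appearing in the conclusion of that lemma is exactly the ambient locus whose intersection with $Z$ is $\Gamma_Z$, so the lemma delivers precisely the smoothness of $Z^{\circ} = Z \setminus \Gamma_Z$.

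First I would verify the three hypotheses of Lemma \ref{lem:crismcharp2} for these weights. For coprimality, I would note that $a_0,\dots,a_{n+1}$ are mutually coprime to each other by Condition \ref{cd1}.(1), so $a_0,\dots,a_n$ are pairwise coprime and, for $i < n$, each $a_i$ is coprime to both $a_n$ and $a_{n+1}$ and hence to their product $b = a_n a_{n+1}$; the only non-coprime pair is $(a_n, b)$, with $\gcd(a_n,b) = a_n \ge 2$ by Condition \ref{cd1}.(2), which is exactly the permitted exception $\{n,n+1\}$. For divisibility, since $d = a_{\Pi}$ by Condition \ref{cd1}.(3), $d$ is divisible by every $a_i$ and by $b$. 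For the last hypothesis $2r \ge n$, I would observe that $b \ge 2$, so the weights equal to $1$ among $a_0,\dots,a_n,b$ are exactly the $a_i$ equal to $1$; thus the relevant value of $r$ is unchanged, and $2r \ge n$ follows from $2r > n$ in Condition \ref{cd1}.(4).

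The one point that needs a small argument, and the place I would be most careful, is that $Z$ is cut out by the special form $G = z^e + z^{e-1}f_b + \cdots + f_{eb}$, whereas Lemma \ref{lem:crismcharp2} concerns a general member of $|\mcO_{\mbP}(d)|$. To bridge this I would note that a general form of degree $d = eb$ on $\mbP$ is $\sum_{j=0}^{e} c_j z^j$ with $c_j \in \K[x_0,\dots,x_n]$ general of degree $(e-j)b$; the coefficient $c_e$ of $z^e$ is a single scalar, generically nonzero, so after rescaling to $c_e = 1$ and setting $c_{e-i} = f_{ib}$ we recover $G$ with the $f_{ib}$ general. Hence $Z$ is a general member of $|\mcO_{\mbP}(d)|$, Lemma \ref{lem:crismcharp2} applies, and $Z^{\circ}$ is smooth.
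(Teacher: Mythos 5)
Your proof is correct and is exactly the paper's argument: the paper proves this lemma by a one-line appeal to Lemma \ref{lem:crismcharp2} applied to $\mbP_{\K}(a_0,\dots,a_n,b)$, having already noted that $Z$ is a general member of $|\mcO_{\mbP}(d)|$. Your verification of the three hypotheses (with $(a_n,b)$ as the permitted exceptional pair) and of the rescaling step making $G$ general are precisely the details the paper leaves implicit.
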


\begin{proof}
This follows from Lemma \ref{lem:crismcharp2}.
\end{proof}

\begin{Lem} \label{lem:smalongGamma}
$X$ is smooth along $\Gamma_X$.
\end{Lem}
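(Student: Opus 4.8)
The plan is to exploit the purely inseparable structure of $\pi$: since $p \mid a_n$, in characteristic $p$ the derivative $\partial F/\partial y$ vanishes identically (every term $(e-j)a_n\, y^{(e-j)a_n-1}f_{jb}$ has the coefficient $a_n \equiv 0$), so smoothness is governed entirely by the partials $\partial F/\partial x_0,\dots,\partial F/\partial x_n$. First I would locate $\Gamma_X$ inside the smooth locus of $\tilde{\mbP}$. The weighted line $(x_0=\cdots=x_{n-1}=0)\cong \mbP(a_n,a_{n+1})$ is singular only at its two vertices $\msp_n$ and $\msp_{n+1}$, and a direct substitution shows neither lies on $X$: at $\msp_{n+1}$ only the leading term $y^{ea_n}$ survives and gives $1\neq 0$, while at $\msp_n$ only $f_{eb}$ survives and its coefficient of $x_n^{e a_{n+1}}$ is nonzero for general $f_{eb}$. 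Hence $\Gamma_X\subset U_{n,n+1}$, where the $\mbG_m$-action on the affine cone $C_X\subset\mbA^{n+2}$ is free (the nonvanishing coordinates $x_n,y$ carry coprime weights). It follows that $X$ is smooth at a point of $\Gamma_X$ if and only if $C_X$ is smooth at the cone points above it, i.e.\ if and only if $\nabla F\neq 0$ there.

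Next I would compute $\nabla F$ along $\Gamma:=(x_0=\cdots=x_{n-1}=0)$. For $i<n$, a monomial of $f_{jb}$ can contribute a pure power of $x_n$ to $\partial f_{jb}/\partial x_i$ only if it equals $x_i x_n^k$, which forces $a_i+k a_n = jb$ and hence $a_n\mid a_i$; since the weights are coprime and $a_n\ge 2$ this is impossible, so $\partial F/\partial x_i|_{\Gamma}=0$ for every $i<n$. Together with $\partial F/\partial y=0$, this leaves $\partial F/\partial x_n$ as the only possibly nonzero partial. Setting $Y=y^{a_n}$, $T=x_n^{a_{n+1}}$ and writing $\Phi(Y,T):=F|_{\Gamma}=\sum_{j=0}^{e}c_j Y^{e-j}T^{j}$, where $c_0=1$ and $c_j$ is the coefficient of $x_n^{j a_{n+1}}$ in $f_{jb}$, a short computation yields $\partial F/\partial x_n|_{\Gamma}=a_{n+1}\,x_n^{\,a_{n+1}-1}\,\partial_T\Phi(Y,T)$. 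Because $\gcd(a_n,a_{n+1})=1$ forces $p\nmid a_{n+1}$ and $x_n\neq 0$ on $\Gamma_X$, nonvanishing of $\nabla F$ along $\Gamma_X$ reduces to showing $\partial_T\Phi\neq 0$ at every zero of $\Phi$ lying on $\Gamma_X$.

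Finally I would conclude via a separability argument for the binary form $\Phi$. Since $p\mid a_n$ and $e=a_0\cdots a_{n-1}$ is a product of weights coprime to $a_n$, we have $p\nmid e$; consequently $\partial_T\Phi$ is a genuine form of degree $e-1$ (leading coefficient $e\,c_e\neq 0$), and the discriminant of a monic degree-$e$ binary form is not identically zero modulo $p$. As the $c_j$ are independent coefficients of the very general $f_{jb}$, the form $\Phi$ has $e$ distinct roots. Every point of $\Gamma_X$ is a zero of $\Phi$ with $Y=y^{a_n}\neq 0$ and $T=x_n^{a_{n+1}}\neq 0$ (the roots with $Y=0$ or $T=0$ being exactly the excluded vertices $\msp_n,\msp_{n+1}$), and at a simple root with $Y\neq 0$ one has $\partial_T\Phi\neq 0$. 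Thus $\nabla F\neq 0$ along $\Gamma_X$, and $X$ is smooth there. The step I expect to be the main obstacle is the bookkeeping that isolates $\partial F/\partial x_n$ as the sole surviving partial and reduces it cleanly to $\partial_T\Phi$: one must invoke coprimality of the weights twice, once to kill all $\partial F/\partial x_i|_{\Gamma}$ with $i<n$ and once to guarantee $p\nmid e$, while checking that the two vertices, where the root behaviour of $\Phi$ would otherwise be problematic, genuinely lie off $X$.
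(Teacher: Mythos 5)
Your proposal is correct and takes essentially the same route as the paper: both arguments exclude the vertices $\msp_n,\msp_{n+1}$ from $X$ using generality of the coefficients, reduce to the locus $x_n \neq 0 \neq y$ via coprimality of $a_n$ and $a_{n+1}$ (the paper through the explicit chart $V \cong \mbA^n_{\tilde{x}_0,\dots,\tilde{x}_{n-1}} \times (\mbA^1_u \setminus \{o\})$, you through the free $\mbG_m$-action on the affine cone), and then note that modulo the ideal $(x_0,\dots,x_{n-1})$ the equation is a one-variable polynomial whose coefficients $\alpha_j$ (= your $c_j$, the coefficients of $x_n^{j a_{n+1}}$ in $f_{jb}$) are general, so that $p \nmid e$ (equivalently the paper's $p \nmid \mu e a_{n+1}$) forces simple roots away from the coordinate points and hence a nonvanishing derivative. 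Your explicit binary-form bookkeeping with $\Phi(Y,T)$ and the verification that the partials $\partial F/\partial x_i$, $i<n$, and $\partial F/\partial y$ vanish along $\Gamma$ spells out what the paper compresses into ``it is then easy to check,'' but the substance is identical.
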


\begin{proof}
Set $V = (x_n \ne 0) \cap (y \ne 0) \subset \tilde{\mbP}_{\K}$.
We have $\Gamma_X = \Gamma_X \cap V$ since $\msp_n, \msp_{n+1} \notin X$.
Take positive integers $\lambda,\mu$ such that $\lambda a_n - \mu a_{n+1} = 1$ and set $Q = x_n^{\lambda} y^{-\mu}$.
Note that $p \nmid \mu$ since $p \mid a_n$.
Then $V$ can be identified with $\mbA^n_{\tilde{x}_0,\dots,\tilde{x}_{n-1}} \times (\mbA^1_u \setminus \{o\})$, where $\tilde{x}_i = x_i/Q^{a_i}$ and $u = y^{a_n}/x_n^{a_{n+1}}$.
We have $y|_V = u^{\lambda}$ and $x_n|_V = u^{\mu}$ so that global sections 
\[
y^{e a_n}, y^{(e-1) a_n} x_n^{a_{n+1}}, \cdots, x_n^{e a_{n+1}},
\]
restrict to functions
\[
u^{\mu e a_n}, u^{\mu e a_n - 1}, \cdots, u^{\mu e a_n - e} = u^{\lambda e a_{n+1}},
\]
on $V$.
We write 
\[
F = y^{e a_n} + \alpha_1 y^{(e-1) a_n} x_n^{a_{n+1}} + \alpha_2 y^{(e-2) a_n} x_n^{2 a_{n+1}} + \cdots + \alpha_e x_n^{e a_{n+1}} + h,
\]
where $\alpha_i \in \K$ is the coefficient of $x_n^{i a_{n+1}}$ in $f_{i b}$ and $h = h (x_0,\dots,x_n,y)$ is the remaining terms.
Note that $h \in (x_0,\dots,x_{n-1})$.
Then $X \cap V$ is defined by the equation
\[
F|_V = u^{\lambda e a_n} + \alpha_1 u^{\lambda e a_n - 1} + \alpha_2 u^{\lambda e a_n - 2} + \cdots + \alpha_e u^{\lambda e a_n - e} + \tilde{h},
\]
where $\tilde{h} = \tilde{h} (\tilde{x}_0,\dots, \tilde{x}_{n-1},u) = h|_V$.
Note that $\lambda e a_n - e = \mu e a_{n+1}$ is not divisible by $p$, $\alpha_1, \dots,\alpha_e$ are general,  $\tilde{h} \in (\tilde{x}_0,\dots, \tilde{x}_{n-1})$ and $\Gamma_X \cap V$ is defined by $\tilde{x}_0 = \cdots = \tilde{x}_{n-1} = 0$.
It is then easy to check that $X$ is smooth along $\Gamma_X$ and the proof is completed.
\end{proof}

We set $\mcL = \mcO_{Z^{\circ}} (a_{n+1})$.
We can view $z$ (or more precisely $z|_{Z^{\circ}}$) as an element of $H^0 (Z^{\circ}, \mcL^{a_n}) = H^0 (Z^{\circ}, \mcO_Z (b))$, and $\pi^{\circ} = \pi|_{X^{\circ}} \colon X^{\circ} \to Z^{\circ}$ is the covering obtained by taking the $a_n$th roots of $z$.
We define $\Delta_Z = (x_0 = \cdots = x_r = 0) \cap Z$ and $\Delta^{\circ}_Z = \Delta_Z \setminus \Gamma_Z$.

\begin{Lem} \label{lem:critDelta1}
The section $z \in H^0 (Z^{\circ}, \mcL^{a_n})$ does not have a critical point along $\Delta^{\circ}_Z$.
\end{Lem}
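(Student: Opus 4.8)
The plan is to convert the statement into a smoothness assertion about $X$ and then to run a dimension count over the family of defining equations. By Lemma~\ref{lem:smZ1} the base $Z^{\circ}$ is smooth, so $\pi^{\circ} = \pi|_{X^{\circ}} \colon X^{\circ} \to Z^{\circ}$ is genuinely the covering obtained by taking $a_n$th roots of $z$, with $p \mid a_n$; hence by the characterization of singularities of such covers recalled in Section~\ref{sec:prelim} (a point of the cover is singular exactly over a critical point of the section), $z$ has a critical point on $\Delta^{\circ}_Z$ if and only if $X^{\circ}$ is singular at a point of $\pi^{-1}(\Delta^{\circ}_Z)$. Writing $\Delta_X := (x_0 = \cdots = x_r = 0) \cap X = \pi^{-1}(\Delta_Z)$, we have $\pi^{-1}(\Delta^{\circ}_Z) = \Delta_X \setminus \Gamma_X$, so it suffices to prove that $X$ is smooth along $\Delta_X \setminus \Gamma_X$. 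First I would note that $X$ avoids every coordinate point $\msp_i$ with $a_i > 1$: since $d = a_{\Pi}$ is divisible by each $a_i$, the coefficient in $F$ of $x_i^{d/a_i}$ (for $i \le n$) is a general constant and that of $y^{e a_n}$ (for $i = n+1$) equals $1$, hence is nonzero. As the weights are mutually coprime, these are the only singular points of $\tilde{\mbP}$, so every point of $\Delta_X \setminus \Gamma_X$ lies in the smooth locus of $\tilde{\mbP}$, and smoothness of $X$ there is equivalent to the non-vanishing of the affine gradient of $F$ at the corresponding cone point. Since $p \mid a_n$ forces every exponent of $y$ in $F$ to be divisible by $p$, we have $\partial F/\partial y \equiv 0$, so it is enough to show that at each such point some $\partial F/\partial x_k$ with $0 \le k \le n$ is nonzero.

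Next I would set up the incidence count. Let $P$ be the affine space parametrizing the tuples $(f_b,\dots,f_{eb})$, let $\Sigma^{\circ} := (x_0 = \cdots = x_r = 0) \setminus (x_0 = \cdots = x_{n-1} = 0) \subset \tilde{\mbP}$ (so that $\Delta_X \setminus \Gamma_X = X \cap \Sigma^{\circ}$, of dimension $\le n - r$), and consider
\[
J = \{\, (\msq, (f_i)) \mid \msq \in \Sigma^{\circ},\ \partial F/\partial x_k(\hat{\msq}) = 0 \text{ for } 0 \le k \le r \,\} \subset \Sigma^{\circ} \times P,
\]
where $\hat{\msq}$ is a lift to the affine cone. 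I claim that over every $\msq \in \Sigma^{\circ}$ other than the coordinate points $\msp_{r+1},\dots,\msp_{n-1}$, the $r+1$ linear functionals $(f_i) \mapsto \partial F/\partial x_k(\hat{\msq})$, $0 \le k \le r$, are linearly independent. Granting this, the fibre of $J$ over such $\msq$ has codimension $r+1$ in $P$, and since $2r > n$ by Condition~\ref{cd1} we get $\dim J \le (n-r) + (\dim P - (r+1)) < \dim P$, so a general $X$ has no point of $\Sigma^{\circ}$ where all these partials vanish. The finitely many coordinate points $\msp_{r+1},\dots,\msp_{n-1}$ are then treated separately: a singular point of $X$ at $\msp_j$ would in particular require $\msp_j \in X$, i.e. $F(\msp_j) = 0$, a nontrivial condition on $P$ (the coefficient of $x_j^{d/a_j}$), failing for general $(f_i)$. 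Combining the two cases shows $X$ is smooth along $\Delta_X \setminus \Gamma_X$.

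The heart of the matter, and the step I expect to be most delicate, is the asserted independence. For $k \le r$ we have $a_k = 1$ and $\partial F/\partial x_k(\hat{\msq}) = \sum_{i=1}^{e} y(\msq)^{(e-i)a_n}\, \partial f_{ib}/\partial x_k(\hat{\msq})$, so I would perturb a single $f_{ib}$ by $x_k \cdot m$, where $m$ is a monomial of degree $ib - 1$ in those coordinates $x_l$ ($r < l \le n$) nonvanishing at $\msq$; as $x_0(\msq) = \cdots = x_r(\msq) = 0$, this changes the $k$th functional by $y(\msq)^{(e-i)a_n} m(\msq)$ and leaves the others ($k' \le r$, $k' \ne k$) fixed, yielding independence once a valid pair $(i,m)$ is exhibited. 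Producing $(i,m)$ is the only real point. If $y(\msq) \ne 0$, every $i$ is admissible: when two of the $x_l$ are nonzero the required monomial exists by Lemma~\ref{lem:easy} (their weights being coprime), and when only $x_j$ is nonzero one solves $i_0 b \equiv 1 \pmod{a_j}$ with $1 \le i_0 \le e$, possible since $\gcd(a_j,b) = 1$ and $a_j \mid e = a_0 \cdots a_{n-1}$, and takes $m = x_j^{(i_0 b - 1)/a_j}$. If $y(\msq) = 0$, only $i = e$ survives and one needs a monomial of degree $d - 1$, again furnished by Lemma~\ref{lem:easy} as soon as two of $x_{r+1},\dots,x_n$ are nonzero. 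The construction breaks down precisely when $y(\msq) = 0$ and $x_j$ is the unique nonzero coordinate, i.e. when $\msq = \msp_j$ — exactly the excluded points handled above — so the argument is complete.
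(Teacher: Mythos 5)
Your proposal is correct and follows essentially the same route as the paper's proof: reduce the statement to smoothness of $X$ along $\pi^{-1}(\Delta_Z^{\circ})$, run a dimension count over the space of admissible defining polynomials using $2r > n$, establish the codimension-$(r+1)$ claim at non-coordinate points by exhibiting monomials of the form $x_k \cdot M$ with $M$ of degree $d-1$ supported on the nonvanishing coordinates (with $y$-exponent divisible by $a_n$) via Lemma \ref{lem:easy}, and dispose of the coordinate points $\msp_{r+1},\dots,\msp_{n-1}$ by noting that a general $X$ does not pass through them. The only cosmetic differences are that you phrase the key claim as linear independence of the derivative functionals $\partial F/\partial x_k$, $0 \le k \le r$ (after observing $\partial F/\partial y \equiv 0$ in characteristic $p$), where the paper instead bounds below the rank of the second-order jet restriction map on the subspace $W$, and that your incidence set $J$ should formally be restricted to the non-coordinate points (over a coordinate point $\msp_j$ its fibre is all of $P$, since $a_j \nmid d-1$ forces those partials to vanish identically), which is exactly what your separate treatment of those points accomplishes.
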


\begin{proof}
The section $z$ has a critical point at $\msp \in Z^{\circ}$ if and only if $X$ is singular at any point of $\pi^{-1} (\msp)$.
Thus it is enough to show that $X$ is smooth along $\Delta_X^{\circ}$, where
\[
\Delta_X^{\circ} = \pi^{-1} (\Delta_Z^{\circ}) = (x_0 = \cdots = x_r = 0) \cap X \setminus \Gamma_X.
\]

We set $\Delta_{\tilde{\mbP}} = (x_0 = \cdots = x_r = 0) \subset \tilde{\mbP}$ and $\Delta^{\circ}_{\tilde{\mbP}} = \Delta_{\tilde{\mbP}} \setminus (x_0 = \cdots = x_{n-1} = 0)$.
Let $W$ be the $\K$-vector subspace of $H^0 (\tilde{\mbP}, \mcO_{\tilde{\mbP}} (d))$ generated by the monomials
\[
\{\, x_0^{k_0} \cdots x_n^{k_n} y^{l a_n} \mid k_i, l \ge 0, \sum k_i a_i + l a_n a_{n+1} = d \,\}.
\]
Note that $X$ is defined by a general element of $W$.
We claim that the image of the restriction map
\[
\rest^2_{\msp} \colon W \to \mcO_{\tilde{\mbP}} (d) \otimes (\mcO_{\tilde{\mbP}, \msp}/\mfm^2_{\msp})
\]
is of dimension at least $r+1$ for any point $\msp \in \Delta^{\circ}_{\tilde{\mbP}}$.
We define
\begin{align*}
V_{i,j} &= (x_i \ne 0) \cap (x_j \ne 0) \subset \tilde{\mbP}, & \text{for $r < i < j \le n$}, \\
V_{i,y} &= (x_i \ne 0) \cap (y \ne 0) \subset \tilde{\mbP}, & \text{for $r < i \le n-1$}.
\end{align*}
We set $\msp_i = (0\!:\!\cdots\!:\!1\!:\!\cdots\!:\!0) \in \tilde{\mbP}$, where the unique $1$ is in the $(i+1)$st position.
Then we have
\[
\Delta^{\circ}_{\tilde{\mbP}} \setminus \{\msp_{r+1},\dots,\msp_{n-2}\} = \left( \bigcup_{r < i < j \le n} \Delta_{\tilde{\mbP}} \cap V_{i,j} \right) \cup \left( \bigcup_{r < i \le n-1} \Delta_{\tilde{\mbP}} \cap V_{i,y} \right).
\]

Suppose that $\msp \in \Delta \cap V_{i,j}$ for some $r < i \ne j \le n$.
Then, since $a_i$ is coprime to $a_j$ and $d \ge a_i a_j > (a_i-1)(a_j-1)$, there exists monomial $M = x_i^{\lambda} x_j^{\mu}$ of degree $d-1$ by Lemma \ref{lem:easy}.
The section $x_i M \in W$, for $i = 0,\dots,r$, restricts to the function $\tilde{x}_i u^l$ on $V_{i,j}$, where $l$ is a suitable integer, and $\tilde{x}_0,\dots,\tilde{x}_r$ form a part of local coordinates of $\tilde{\mbP}$ at $\msp$.
Thus the image of $\rest^2_{\msp}$ is of dimension at least $r+1$. 

Suppose that $\msp \in \Delta \cap V_{i,y}$ for some $r < i \le n-1$.
Then, since $a_i$ is coprime to $a_n a_{n+1}$ and $d \ge a_i a_n a_{n+1} > (a_i-1)(a_n a_{n+1} - 1)$, there exists a monomial $M' = x_i^{\lambda'} y^{\mu' a_n}$ of degree $d-1$.
Since $x_i M' \in W$, $i = 0,\dots,r$, we can repeat the above arguments and conclude that the image of $\rest^2_{\msp}$ is of dimension at least $r+1$.
Thus the claim is proved.

For $\msp \in \Delta^{\circ}_{\tilde{\mbP}}$, let $W_{\msp}$ be the subspace of $W$ consisting of the polynomials $H \in W$ such that the weighted hypersurface in $\tilde{\mbP}$ defined by $H = 0$ is singular at $\msp$.
By the above claim, the codimension of $W_{\msp}$ in $W$ is at least $r+1$ for any $\msp \in \Delta^{\circ} \setminus \{\msp_{r+1},\dots,\msp_{n-2}\}$. 
Since $\dim \Delta_{\tilde{\mbP}} = n-r$ and $2 r > n$ by Condition \ref{cd1}.(4), we have
\[
\dim W - (r+1) + \dim \Delta < \dim W.
\]
This shows that $X$ is smooth along $\Delta^{\circ}_{\tilde{\mbP}} \setminus \{\msp_{r+1},\dots,\msp_{n-2}\}$.
It is clear that a general $H \in W$ does not vanish at $\msp_i$ (for $i = r+1,\dots,n-2$). 
Thus $X$ is smooth along $\Delta^{\circ}_{\tilde{\mbP}}$.
\end{proof}

\begin{Lem} \label{lem:admcr1}
The section $z \in H^0 (Z^{\circ}, \mcL^{a_n})$ has only admissible critical points on $Z^{\circ}$.
\end{Lem}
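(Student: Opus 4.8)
The plan is to follow the strategy of Lemma \ref{lem:admcr2}. First I would reduce to the locus $Z \cap U$, where $U = U_0 \cup \cdots \cup U_r$ is the union of the weight-one charts. Since $r \le n-1$ under Condition \ref{cd1} (the case $r \ge n$ having been disposed of in Lemma \ref{lem:known1}), we have $\Gamma_Z \subset \Delta_Z$ and hence $Z^{\circ} = (Z \setminus \Delta_Z) \sqcup \Delta^{\circ}_Z$ with $Z \setminus \Delta_Z = Z \cap U$. Lemma \ref{lem:critDelta1} already shows that $z$ has no critical point on $\Delta^{\circ}_Z$, so it suffices to prove that $z$ has only admissible critical points on $Z \cap U$.

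The key observation I would use is that, on a chart $U_i$, a critical point of $z$ at a point $\msp$ with $z(\msp) = c$ is exactly a singular point of the fibre $\{z = c\} \cap Z$, and that the quadratic part (and, in the relevant characteristic $2$ case, the cubic part) of the local equation of the section $z$ at $\msp$ coincides with that of the tangent cone of this fibre at $\msp$. Thus the admissibility of the critical point is governed by the type of the fibre singularity, which lives in the $x_0,\dots,x_n$-directions. To control these jets I would vary $Z$ by modifying its top coefficient $f_{eb}$ within general homogeneous polynomials of degree $d$ in $x_0,\dots,x_n$. The relevant restriction map is therefore that of $\mbP(a_0,\dots,a_n)$, whose maximal weight is $a_n$ by Condition \ref{cd1}.(2); since Condition \ref{cd1}.(3) gives $d \ge 3 a_n$, Lemma \ref{lem:restP}.(1) shows that $\rest^4_{\msp}$ is surjective at every $\msp \in U$. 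This surjectivity is precisely the input required to run the dimension count of Remark \ref{rem:admcr}.

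With this in hand I would carry out the count over the incidence variety of pairs $(G, \msp)$ with $\msp \in Z \cap U$ a critical point of $z$: surjectivity of $\rest^4_{\msp}$ makes the critical-point condition of the expected codimension and the non-admissibility condition an additional proper closed condition, so that for a general $G$ the section $z$ acquires no non-admissible critical point on $Z \cap U$. Combined with Lemma \ref{lem:critDelta1}, this would complete the proof.

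The main obstacle is twofold. First, unlike in Lemma \ref{lem:admcr2}, the section $z$ is a fixed coordinate rather than a general member of a linear system, so Remark \ref{rem:admcr} cannot be applied verbatim; the genericity must instead be extracted from the variation of $Z$ (equivalently of $f_{eb}$), which is what the fibre reinterpretation accomplishes, and one must verify that the critical points genuinely move with $Z$ and do not accumulate on $\Gamma_Z$ or on the singular strata. Second, and more delicate, are the characteristic $2$ cases in the definition of an admissible critical point: when the field has characteristic $2$ and $n$ is odd, admissibility is not merely nondegeneracy of the quadratic part but also constrains the cubic part (and depends on whether $4 \mid a_n$). Consequently the count must control the full $3$-jet, which is exactly why surjectivity of $\rest^4_{\msp}$ — and hence the bound $d \ge 3 a_n$ of Condition \ref{cd1}.(3) — is needed rather than mere control of the Hessian.
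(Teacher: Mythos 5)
Your proposal follows the paper's own proof in all essentials: the same decomposition of $Z^{\circ}$ into $Z \cap U$ and $\Delta^{\circ}_Z$ (with Lemma \ref{lem:critDelta1} disposing of $\Delta^{\circ}_Z$), the same source of genericity (fixing $f_b,\dots,f_{(e-1)b}$ and varying only $f_{eb}$, then running an incidence-variety dimension count over that space), and the same key input that Condition \ref{cd1}.(3), i.e.\ $d \ge 3a_n$, makes the $3$-jet restriction map in the $x$-directions surjective, so that the critical-point condition costs $n+1$ and non-admissibility is a further proper closed condition. The only cosmetic difference is that you phrase the jet analysis via singularities of the fibre $\{z = \zeta\} \cap Z$, whereas the paper substitutes $\tilde{z} = \zeta + \ell + q + c + \cdots$ into $\tilde{G} = 0$ and reads off the affine-linear relations directly; note that the cubic part of the section and that of the fibre agree only up to a nonzero scalar and an added multiple of $q$, but since the correspondence of $3$-jets is an affine bijection this does not affect the genericity transfer, so your argument is sound.
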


\begin{proof}
We choose and fix general $f_b, f_{2b},\dots,f_{(e-1)b} \in \K [x_0,\dots,x_n]$ and we will show that the section $z \in H^0 (Z^{\circ},\mcL^{a_n})$ has only admissible critical points on $Z^{\circ}$ for a general choice of $f_d = f_{e b} \in \K [x_0,\dots,x_n]$.
Note that $Z$ itself varies as we vary $f_d$.

Let $\mcF$ be the affine space parameterizing homogeneous polynomials of degree $d = e b$ in variables $x_0,\dots,x_n$.
For a homogeneous polynomial $f_d$ of degree $d$, we denote by $[f_d] \in \mcF$ the corresponding point.
We set
\[
\mcW^{\operatorname{na}} := \{\, (\msp, [f_d]) \in \mbP \times \mcF \mid \text{$\msp \in Z^{\circ}$ and $z$ has a non-admissible critical point at $\msp$} \,\}.
\] 
It is enough to show that there is no $\mcF$-dominating component of $\mcW^{\operatorname{na}}$.
Assume to the contrary that there exists such a component $\mcV$ of $\mcW^{\operatorname{na}}$ and let $C$ be the $\mbP$-center of $\mcV$, i.e.\ the image of $\mcV$ under the first projection $\mcW^{\operatorname{na}} \to \mbP$.

For $0 \le i \le r$, we set $U_i = (x_i \ne 0) \subset \mbP$ and $U = U_0 \cup \cdots \cup U_r \subset \mbP$.
Assume that $C \cap U \ne \emptyset$.
We compute the number of independent conditions imposed for $z$ (and for $f_d$) to have a non-admissible critical point at $\msp$.
To do so we may assume $\msp = (1\!:\!0\!:\!\cdots\!:\!0\!:\!\zeta) \in U_0$ for some $\zeta \in \K$ by considering a suitable automorphism of $\mbP^{\circ}$ (which leaves $z$ invariant).
Note that, by Lemma \ref{lem:restP}.(1) and Condition \ref{cd1}.(3), the restriction map
\[
\rest^3_{\msp} \colon H^0 (\mbP, \mcO_{\mbP} (d)) \to \mcO_{\mbP} (d) \otimes (\mcO_{\mbP}/\mfm_{\msp}^4)
\]
is surjective.
For a homogeneous polynomial $h = h (x_0,\dots,x_n,z) \in H^0 (\mbP, \mcO_{\mbP} (d))$, we set $\tilde{h} = h (1,\tilde{x}_1,\dots,\tilde{x}_n,\tilde{z})$, so that $\tilde{h}$ is the restriction of the section $h$ to $U_0 \cong \mbA^{n+1}_{\tilde{x}_1,\dots,\tilde{x}_n,\tilde{z}}$.
We write 
\[
f_{i b} = \alpha_i x_0^{i b} + \ell_i x_0^{i b -1} + q_i x_0^{i b -2} + c_i x_0^{i b -3} + g_{i b},
\]
where $\ell_i, q_i, c_i$ are linear, quadratic, cubic forms in $x_1,\dots,x_n$ and $g_{ib}$ is contained in the ideal $(x_1,\dots,x_n)^4 \subset \K [x_0,\dots,x_n]$.
Note that $Z \cap U_0$ is the hypersurface in $U_0$ defined by the equation
\[
\tilde{G} = \tilde{z}^e + \tilde{z}^{e-1} \tilde{f}_b + \tilde{z}^{e-2} \tilde{f}_{2 b} + \cdots + \tilde{f}_{eb} = 0.
\]
We set
\[
\xi := \frac{\prt \tilde{G}}{\prt \tilde{z}} (\msp) 
= e \zeta^{e - 1} + \sum_{j=1}^{e-1} (e-j) \alpha_j \zeta^{(e-j) - 1} \in \K.
\]
We may assume $\xi \ne 0$ because otherwise $\tilde{z}$ (or more precisely, its translation $\tilde{z} - \zeta$) becomes a part of local coordinates of $Z^{\circ}$ at $\msp$ and $z$ does not have a critical point at $\msp$.
Then we can choose $\tilde{x}_1,\dots,\tilde{x}_n$ as local coordinates of $Z^{\circ}$ at $\msp$ and we express $\tilde{z}$ as
\[
\tilde{z} = \zeta + \ell + q + c + \cdots,
\]
where $\ell, q$ and $c$ are linear, quadric and cubic forms in variables $\tilde{x}_1,\dots,\tilde{x}_n$, respectively.

By substituting $\tilde{z} = \zeta + \ell + \cdots$ into the defining equation $\tilde{G} = 0$ of $Z \cap U_0$, we have
\[
g := \tilde{G} (\tilde{x}_1,\dots,\tilde{x}_n,\zeta + \ell + q + c + \cdots) = 0.
\]
Looking at the constant term of $g$, we have
\[
\zeta^e + \sum_{j=1}^{e-1} \alpha_i \zeta^{e-j} + \alpha_e = 0,
\]
which imposes $1$ condition on $f_d = f_{e b} = \alpha_e x_0^{eb} + \ell_e x_0^{e b -1} + \cdots$ since $\rest^3_{\msp}$ is surjective.
Looking at the linear term of $g$, we have
\[
\xi \ell + \sum_{j=1}^{e-1} \zeta^{e-j} \ell_j + \ell_e = 0,
\]
We see that $z$ has a critical point at $\msp$ if and only if $\ell = 0$ as a polynomial, which is equivalent to
\[
\ell_e = - \sum_{j=1}^{e-1} \zeta^{e-j} \ell_j.
\]
Since $\rest_{\msp}^3$ is surjective, this imposes $n$ independent conditions on $f_d$.
From now on we assume that $\ell = 0$.
Then, by looking at the quadratic and cubic terms of $g = 0$, we have
\[
\begin{split}
- \xi q &=  \sum_{j = 1}^{e-1} \zeta^{e-j} q_j + q_e, \\
- \xi c &= \sum_{j=1}^{e-1} (e-j) \zeta^{e-j-1} q \ell_i + \sum_{j=1}^{e-1} \zeta^{e-j} c_j + c_e.
\end{split}
\]
It is now easy to see that, in view of the fact that $\rest_{\msp}^3$ is surjective, $\tilde{z} = \zeta + q + c + \cdots$ has an admissible critical point at $\msp$ for a general choice of $q_e, c_e$.
This shows that the fiber $\mcW^{\operatorname{na}}_{\msp}$ of $\mcW^{\operatorname{na}} \to \mbP$ over $\msp \in U = U_0 \cup \cdots \cup U_r$ is of dimension $\dim \mcF - (n+2)$.
Since $\dim \mbP = n+1$, it follows that the $\mbP$-center $C$ of $\mcV$ is disjoint from $U$ and thus contained in $(x_0 = \cdots = x_r = 0) \subset \mbP$, that is, $z$ has only admissible critical points on $Z^{\circ} \setminus \Delta^{\circ}_Z$ for a general choice of $f_d$.
Now the proof is completed by Lemma \ref{lem:critDelta1}.
\end{proof}

\begin{Prop}
The variety $X$ admits a universally $\CH_0$-trivial resolution $\varphi \colon \tilde{X} \to X$ of singularities such that $H^0 (\tilde{X}, \Omega^{n-1}_{\tilde{X}}) \ne 0$.
\end{Prop}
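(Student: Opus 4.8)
The plan is to apply Lemma \ref{lem:covtech} to the inseparable cover $\pi^{\circ} = \pi|_{X^{\circ}} \colon X^{\circ} \to Z^{\circ}$, which is the covering of the smooth variety $Z^{\circ}$ (Lemma \ref{lem:smZ1}) obtained by taking the $a_n$th roots of the section $z \in H^0(Z^{\circ}, \mcL^{a_n})$. Since $p \mid a_n$, the hypotheses of the construction of Section \ref{sec:prelim} are met, and by Lemma \ref{lem:admcr1} the section $z$ has only admissible critical points on $Z^{\circ}$. Thus Lemma \ref{lem:covtech} provides an invertible subsheaf $\mcM^{\circ} \subset (\Omega_{X^{\circ}}^{n-1})^{\vee \vee}$ with $\mcM^{\circ} \cong (\pi^{\circ})^{*}(\omega_{Z^{\circ}} \otimes \mcL^{a_n})$, together with a universally $\CH_0$-trivial resolution of $X^{\circ}$ obtained by blowing up its (isolated, admissible) singular points, along which $\mcM^{\circ}$ pulls back into $\Omega^{n-1}$.

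First I would observe that this resolution is in fact a resolution of all of $X$. Indeed, $X$ is smooth along $\Gamma_X$ by Lemma \ref{lem:smalongGamma}, and since $X$ is singular exactly over the critical points of $z$, we have $\Sing(X) \subset X^{\circ}$, consisting of finitely many admissible critical points. Blowing these up inside $X$ yields $\varphi \colon \tilde{X} \to X$, which is an isomorphism over $X \setminus \Sing(X) \supset \Gamma_X$ and whose exceptional fibres are precisely those of the resolution of $X^{\circ}$; hence $\varphi$ is universally $\CH_0$-trivial and restricts over $X^{\circ}$ to the resolution furnished by Lemma \ref{lem:covtech}.

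Next I would compute $\mcM^{\circ}$. By adjunction on the smooth locus, $\omega_{Z^{\circ}} \cong \mcO_{Z^{\circ}}(d - (a_0 + \cdots + a_n + b))$, while $\mcL^{a_n} \cong \mcO_{Z^{\circ}}(a_n a_{n+1}) = \mcO_{Z^{\circ}}(b)$, so that
\[
\mcM^{\circ} \cong (\pi^{\circ})^{*}\bigl(\omega_{Z^{\circ}} \otimes \mcL^{a_n}\bigr) \cong \mcO_{X^{\circ}}\bigl(d - a_{\Sigma} + a_{n+1}\bigr).
\]
The hypothesis $I_X = a_{\Sigma} - d \le a_{\max} = a_{n+1}$ of Theorem \ref{mainthm1} says exactly that the twist $d - a_{\Sigma} + a_{n+1}$ is non-negative, so $\mcM^{\circ}$ is effective and $H^0(X^{\circ}, \mcM^{\circ}) \ne 0$. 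Choosing a nonzero section and pushing it through $\varphi^{*}\mcM^{\circ} \inj \Omega_{\tilde{X}^{\circ}}^{n-1}$ produces a nonzero form $\omega \in H^0(\tilde{X}^{\circ}, \Omega_{\tilde{X}^{\circ}}^{n-1})$, where $\tilde{X}^{\circ} = \varphi^{-1}(X^{\circ})$.

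Finally, I would extend $\omega$ across $\tilde{X} \setminus \tilde{X}^{\circ} = \varphi^{-1}(\Gamma_X)$, which, as $\varphi$ is an isomorphism there, is isomorphic to $\Gamma_X$. Since $\dim \Gamma_X \le 1$ and $\dim X = n \ge 3$, this locus has codimension at least $2$ in the smooth variety $\tilde{X}$; because $\Omega_{\tilde{X}}^{n-1}$ is locally free, $\omega$ extends to a nonzero global section by Hartogs' theorem, giving $H^0(\tilde{X}, \Omega_{\tilde{X}}^{n-1}) \ne 0$. I expect the main obstacle to be exactly this passage from the open pieces $X^{\circ}, Z^{\circ}$, where $Z$ is smooth and Lemma \ref{lem:covtech} applies, to all of $X$: one must verify that no singularities of $X$ lie along $\Gamma_X$ (Lemma \ref{lem:smalongGamma}), that the resolution and its universal $\CH_0$-triviality propagate across $\Gamma_X$, and that the differential form extends over the boundary—each point relying on the codimension estimate provided by Condition \ref{cd1} together with $n \ge 3$.
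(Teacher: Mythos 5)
Your proposal is correct and follows essentially the same route as the paper: both apply Lemma \ref{lem:covtech} to the covering $\pi^{\circ} \colon X^{\circ} \to Z^{\circ}$ (using Lemmas \ref{lem:smZ1}, \ref{lem:smalongGamma} and \ref{lem:admcr1}), identify $\mcM^{\circ} \cong {\pi^{\circ}}^*(\omega_{Z^{\circ}} \otimes \mcL^{a_n}) \cong \mcO_{X^{\circ}}(d - a_{\Sigma} + a_{n+1}) = \mcO_{X^{\circ}}(d - \bsum_{i=0}^{n} a_i)$, and invoke Condition \ref{cd1}.(5) to get a nonzero section. The only (cosmetic) difference is in handling the boundary $\Gamma_X$: the paper extends the sheaf $\mcM^{\circ}$ across this codimension $\ge 2$ locus by pushforward, obtaining $\mcM \subset (\Omega_X^{n-1})^{\vee\vee}$ with $\varphi^* \mcM \inj \Omega_{\tilde{X}}^{n-1}$, whereas you extend the resulting differential form itself by Hartogs — the same codimension argument phrased at the level of sections rather than sheaves, and you usefully make explicit the gluing of the resolution and its universal $\CH_0$-triviality that the paper leaves implicit.
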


\begin{proof}
Let $\mcM^{\circ}$ be the invertible subsheaf of $(\Omega_{X^{\circ}}^{n-1})^{\vee \vee}$ associated to the covering $\pi^{\circ} = \pi|_{X^{\circ}}\colon X^{\circ} \to Z^{\circ}$ and let $\mcM \subset (\Omega_X^{n-1})^{\vee \vee}$ be the pushforward of $\mcM^{\circ}$ via the open immersion $X^{\circ} \inj X$.
Note that we have
\[
\mcM^{\circ} \cong {\pi^{\circ}}^* (\omega_{Z^{\circ}} \otimes \mcL^{a_n}) \cong \mcO_{X^{\circ}} \big( d - \bsum_{i=0}^n a_i \big),
\]
and hence
\[
\mcM \cong \mcO_X \big( d - \bsum_{i=0}^n a_i \big).
\]

By Lemmas \ref{lem:admcr1} and \ref{lem:covtech}, $X$ admits a universally $\CH_0$-trivial resolution $\varphi \colon \tilde{X} \to X$ such that $\varphi^*\mcM \inj \Omega_{\tilde{X}}^{n-1}$.
We have $H^0 (X, \mcM) \ne 0$ by Condition \ref{cd1}.(5).
This shows that $H^0 (\tilde{X},\Omega_{\tilde{X}}^{n-1}) \ne 0$ and $\tilde{X}$ is not universally $\CH_0$-trivial by Lemma \ref{lem:Totaro}.
\end{proof}

\section{A supplemental result} \label{sec:supple}

In this section, as a supplement to the main theorems stated in Section \ref{sec:intro}, we give a yet another result of the failure of stable rationality of smooth weighted hypersurfaces, which will be necessary in the proof of Corollary \ref{maincor2}.

\begin{Thm} \label{thmapp}
Let $X$ be a very general smooth well formed weighted hypersurface of degree $d$ in $\mbP_{\mbC} (a_0,\dots,a_{n+1})$ which is not a linear cone.
Suppose that $n \ge 3$ and there exists $k \in \{0,\dots,n+1\}$ with the following properties:
\begin{enumerate}
\item $a_k > 1$.
\item $d > a_k (a_i - 1) (a_j-1)$ for any $0 \le i < j \le n+1$, and $d \ge a_k a_{\max}$.
\item There exists $l \ne k$ such that $d/a_k - a_l$ is divisible by $a_k$.
\item The inequality
\[
d \ge \frac{a_k}{a_k+1} a_{\Sigma}
\]
is satisfied.
\end{enumerate}
Then $X$ is not stably rational.
\end{Thm}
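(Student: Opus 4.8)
The plan is to adapt the degeneration of Theorem~\ref{mainthm2}, taking $a_k$-th roots in place of $p$-th roots, while importing the local analysis near the singular point $\msp_k$ from Section~\ref{sec:pfthm1}. Set $b = d/a_k$ (an integer, since $a_{\Pi}\mid d$), fix a prime $p\mid a_k$, and work over an algebraically closed field $\K$ of characteristic $p$. As in Remark~\ref{rem:spargumentThm2}, a very general $W$ degenerates over $\mbC$ to $\mcX_o = (y^{a_k}-f = g = 0)$ with $\deg y = b$ and $f,g$ very general of degrees $d,b$; this is smooth in characteristic $0$ (the new singularities appear only modulo $p$). By the degeneration theorem and Theorem~\ref{thm:sp}, it suffices to produce over $\K$ a universally $\CH_0$-trivial resolution of
\[
X = (y^{a_k}-f = g = 0) \subset \mbP_{\K}(a_0,\dots,a_{n+1},b)
\]
whose total space is not universally $\CH_0$-trivial, where $\pi\colon X\to Z := (g=0)\subset\mbP_{\K}(a_0,\dots,a_{n+1})$ is the covering obtained by taking $a_k$-th roots of $f\in H^0(Z,\mcL^{a_k})$ with $\mcL = \mcO_Z(b)$. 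The associated invertible subsheaf satisfies $\mcM\cong\mcO_X(b+d-a_{\Sigma})$, so $H^0(X,\mcM)\ne 0$ precisely by hypothesis (4). A preliminary lemma, in the spirit of Lemmas~\ref{lem:known1}--\ref{lem:known3}, first disposes of $d\ge a_{\Sigma}$ (where $\omega_W$ is effective) and of the low-degree exceptions of Lemma~\ref{lem:smwhnumerics}.(3): each such exception either violates hypothesis (3) or already falls under Theorem~\ref{mainthm1} via $I_W\le a_{\max}$. After this reduction we may assume $d<a_{\Sigma}$ (so $W$ is Fano and $2r\ge n$ by Lemma~\ref{lem:smwhnumerics}.(1)) and $d\ge 3a_{\max}$.

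The heart of the matter is the point $\msp_k$. First I would record two consequences of hypothesis (3): writing $c=\gcd(a_k,b)$, from $a_k\mid(b-a_l)$ one gets $c\mid a_l$ and $c\mid a_k$, whence $c=1$ as the weights are coprime; and $a_k\nmid b$ (otherwise $a_k\mid a_l$), so that $\msp_k\in Z$. Since $b$ is divisible by $a_{\Pi}/a_k$ and satisfies $b>(a_i-1)(a_j-1)$, $b\ge a_{\max}$, Lemma~\ref{lem:crismcharp3} (applied with degree $b$ and with $2r\ge n$) shows that $Z^{\circ}:=Z\setminus\{\msp_k\}$ is smooth; moreover $\msp_i\notin Z$ for every $i\ne k$, since $a_i\mid b$. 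It remains to show that $X$ is smooth along $\Gamma_X:=\pi^{-1}(\msp_k)$, the analogue of Lemma~\ref{lem:smalongGamma}, where I expect the real work. Because $\gcd(a_k,b)=1$, the chart $V=(x_k\ne 0)\cap(y\ne 0)$ is smooth, isomorphic to $\mbA^{n+1}_{(\tilde x_i)_{i\ne k}}\times(\mbA^1_u\setminus\{o\})$ with $u=y^{a_k}/x_k^{b}$, and a point of $\Gamma_X$ lies in $V$ with $u=f(\msp_k)\ne 0$. Hypothesis (3) furnishes the monomial $x_l x_k^{(b-a_l)/a_k}$ of degree $b$ in $g$, giving $\partial g/\partial\tilde x_l\ne 0$ there, while $y^{a_k}|_V$ restricts to a non-trivial power of $u$, which forces $\partial(y^{a_k}-f)/\partial u\ne 0$ at the points of $\Gamma_X$ (a short computation using $u=f(\msp_k)$ there). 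As these derivatives lie in independent directions, the Jacobian of $(y^{a_k}-f,g)$ has rank $2$ and $X$ is smooth along $\Gamma_X$.

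With the geometry in place I would verify that $f$ has only admissible critical points on $Z^{\circ}$, following Lemmas~\ref{lem:noncritdelta2} and \ref{lem:admcr2}. Along $\Delta_Z^{\circ}=(x_0=\cdots=x_r=0)\cap Z^{\circ}$ every point is a smooth point of $\mbP$ (the $\msp_i$, $i\ne k$, are off $Z$ and $\msp_k$ is removed), so Lemma~\ref{lem:restP2} gives surjectivity of $\rest^2_{\msp}$, and a dimension count then shows that a general $f$ has no critical point there; off $\Delta$ one has $\rest^4_{\msp}$ surjective by Lemma~\ref{lem:restP}.(1) since $d\ge 3a_{\max}$, so by Remark~\ref{rem:admcr} a general $f$ has only admissible critical points. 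Lemma~\ref{lem:covtech} then yields a universally $\CH_0$-trivial resolution of $X^{\circ}=\pi^{-1}(Z^{\circ})$ with $\varphi^*\mcM^{\circ}\inj\Omega^{n-1}$; extending it by the identity over the smooth locus $\Gamma_X$ gives a universally $\CH_0$-trivial resolution $\varphi\colon\tilde X\to X$. Since $\mcM\cong\mcO_X(b+d-a_{\Sigma})$ with $b+d-a_{\Sigma}\ge 0$ by hypothesis (4), we obtain $0\ne H^0(X,\mcM)\inj H^0(\tilde X,\Omega^{n-1}_{\tilde X})$, so $\tilde X$ is not universally $\CH_0$-trivial by Lemma~\ref{lem:Totaro}, which finishes the argument.

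The main obstacle is the smoothness of $X$ along $\Gamma_X$: unlike in Theorem~\ref{mainthm2}, the base $Z$ is genuinely singular at $\msp_k$, and it is only the interplay of the two roles of hypothesis (3)---forcing $\gcd(a_k,b)=1$ so that the chart $V$ is smooth, and supplying a monomial of $g$ transverse to $\Gamma_X$---together with the inseparability of the cover (which makes $\partial/\partial u$, rather than $\partial/\partial y$, the effective transverse derivative) that rescues smoothness in characteristic $p$. A secondary difficulty is the bookkeeping of the exceptional low-degree families, where $d\ge 3a_{\max}$ fails; these must be routed through Theorems~\ref{mainthm1} and \ref{mainthm2} in the preliminary reduction.
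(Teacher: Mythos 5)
Your strategy coincides with the paper's own proof: degenerate $X$ to the complete intersection $(y^{a_k}-f=g=0)$ in $\mbP(a_0,\dots,a_{n+1},b)$, reduce modulo a prime $p\mid a_k$, run the inseparable-cover argument over $Z^{\circ}=Z\setminus\{\msp_k\}$, and use hypothesis (3) in its two roles ($\gcd(a_k,b)=1$, plus transversality of $g$ along $\pi^{-1}(\msp_k)$). Your preliminary reduction matches Lemma~\ref{lem:knownapp} and Condition~\ref{cdapp}, your critical-point analysis matches Lemma~\ref{lem:admcrapp} (using Lemma~\ref{lem:restP2} at $\Delta_Z^{\circ}$ instead of the paper's Lemma~\ref{lem:restP}.(2) is a harmless, arguably cleaner, variant --- it is legitimate because every point of $\Delta_Z^{\circ}$ is a smooth point of $\mbP$, the $\msp_i$ with $i\ne k$ lying off $Z$ since $a_i\mid b$), and the conclusion via $\mcM\cong\mcO_X(b+d-a_{\Sigma})$ and hypothesis (4) is identical.

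Two local steps, however, are not right as written. First, the transversality claim would fail if carried out literally: in characteristic $p\mid a_k$ the restriction $y^{a_k}|_V=u^{\lambda a_k}$ has \emph{zero} $u$-derivative (its exponent is divisible by $p$), so ``$y^{a_k}|_V$ restricts to a non-trivial power of $u$'' does not force $\prt(y^{a_k}-f)/\prt u\ne 0$. What rescues the computation is the monomial $x_k^{b}$ appearing in $f$ with general coefficient $c\ne 0$: it restricts to $c\,u^{\mu b}$ with $\mu b=\lambda a_k-1\equiv -1 \pmod p$; characteristic-freely, at a point of $\Gamma_X$ one has $u^{\lambda a_k}=c\,u^{\mu b}$, whence $\prt/\prt u$ of the equation equals $c\,u^{\mu b-1}(\lambda a_k-\mu b)=c\,u^{\mu b-1}\ne 0$. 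This is exactly the content of the paper's Lemma~\ref{lem:smXoapp}.(1). Second, you assert smoothness of the characteristic-zero degeneration with the parenthetical ``the new singularities appear only modulo $p$'', which is not an argument. This smoothness is genuinely needed --- without it neither Voisin's degeneration theorem nor Theorem~\ref{thm:sp} applies, so the two-step specialization does not start --- and the paper proves it as Lemma~\ref{lem:smXoapp}.(2), combining Lemma~\ref{lem:crismcharp3} (smoothness of $(g=0)$ away from $\msp_k$), Bertini for the branch divisor $(f=g=0)$, smoothness of tame cyclic covers in characteristic $0$, and the chart computation at $\Gamma$ just described. Both repairs use only ingredients already present in your outline, so the proof goes through once they are made.
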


\begin{Rem} \label{rem:simple}
The assumptions in Theorem \ref{thmapp} are complicated.
However, when $2$ appear in the weights and $d/a_{\Pi}$ is odd, they become simple because, by choosing $a_k = 2$, the conditions (1), (2) and (3) are automatically satisfied.

In some cases, Theorem \ref{thmapp} can give results better than Theorems \ref{mainthm1}, \ref{mainthm2}, \ref{mainthm3} (see also Remark \ref{rem:ex} below):
Consider a very general weighted hypersurface $X_{2 a b} \subset \mbP_{\mbC} (1^{n-1},2,a,b)$, where $n \ge 3$, $2 < a < b$, $a, b$ are odd and coprime to each other.
Theorems \ref{mainthm2} and \ref{mainthm3} cannot be applicable to $X_{2 a b}$ and, by applying Theorem \ref{mainthm1}, we conclude the failure of stable rationality of $X_{2 a b}$ when $2 a b - a \ge n+1$.
On the other hand, we can apply Theorem \ref{thmapp} and conclude the failure of stable rationality of $X_{2 a b}$ for $3 a b - a - b \ge n+1$, which is better than the result obtained by Theorem \ref{mainthm1}.
\end{Rem}

\begin{Rem} \label{rem:ex}
We consider a very general weighted hypersurface $X_{2m} \subset \mbP_{\mbC} (1^{2m+1},2)$ of degree $2m$ for $m \ge 2$.
Failure of stable rationality of $X_{2m}$ is proved for even $m \ge 4$ by Theorem \ref{mainthm2} and for odd $m \ge 7$ by Theorem \ref{mainthm3}, and the cases $m = 2,3,5$ are not covered by the main theorems in Section \ref{sec:intro}.
By Theorem \ref{thmapp}, we can conclude that $X_{2m}$ is not stably rational for $m = 3,5$.
Moreover, $X_4 \subset \mbP (1^5,2)$ is covered by \cite{HPT}, so that $X_{2m}$ is not stably rational for any $m \ge 2$.
\end{Rem}

From now on, let $a_0,\dots,a_{n+1}, d, k$ and $p$ be are as in Theorem \ref{thmapp}.
We set $b = d/a_k$.

\begin{Lem} \label{lem:knownapp}
\emph{Theorem \ref{thmapp}} holds true if in addition one of the following condition is satisfied.
\begin{enumerate}
\item $d \ge a_{\Sigma}$.
\item $d < 3 a_{\max}$.
\end{enumerate}
\end{Lem}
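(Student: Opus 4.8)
The plan is to dispose of the two extra hypotheses separately, following the pattern of the companion reduction lemmas \ref{lem:known1}, \ref{lem:known2} and \ref{lem:known3}. Throughout I would keep in mind that, since $X$ is smooth, well formed and not a linear cone, Lemma \ref{lem:charactsmoothwh} guarantees that $a_0,\dots,a_{n+1}$ are mutually coprime, that $a_{\Pi}$ divides $d$, and that $d \ge 2 a_{\max}$; in particular $a_{\max}$ divides $d$, a fact I will use below.

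For the first case, $d \ge a_{\Sigma}$, I would argue exactly as in the earlier reduction lemmas: then $d - a_{\Sigma} \ge 0$, so $\omega_X \cong \mcO_X(d - a_{\Sigma})$ is effective and $H^0(X,\omega_X) \ne 0$, whence $X$ is not stably rational by Lemma \ref{lem:Totaro}. This case needs nothing beyond the adjunction formula.

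The substantive case is $d < 3 a_{\max}$, and the key point I would aim to establish is that it collapses onto a hypothesis already settled by Theorem \ref{mainthm1}, so that no new degeneration is needed. First I would pin down the numerics. Condition (2) of Theorem \ref{thmapp} gives $a_k a_{\max} \le d$, and $a_k \ge 2$ by condition (1), so $2 a_{\max} \le d < 3 a_{\max}$ forces $a_k = 2$; together with $a_{\max}\mid d$ this yields $d = 2 a_{\max}$. Substituting into condition (4), the inequality $d \ge \frac{a_k}{a_k+1} a_{\Sigma} = \frac{2}{3} a_{\Sigma}$ becomes $a_{\Sigma} \le 3 a_{\max}$, that is,
\[
I_X = a_{\Sigma} - d = a_{\Sigma} - 2 a_{\max} \le a_{\max}.
\]
Thus $X$ satisfies the index bound $I_X \le a_{\max}$ of Theorem \ref{mainthm1}, the remaining hypotheses (very general, smooth, well formed, not a linear cone, $n \ge 3$) being inherited verbatim from the statement of Theorem \ref{thmapp}; so Theorem \ref{mainthm1} applies and $X$ is not stably rational.

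The thing to be careful about, and what I would have expected \emph{a priori} to be the obstacle, is whether this boundary case genuinely reduces to a previous theorem or instead requires a fresh cyclic-cover degeneration in characteristic $p \mid a_k$, as happens in the analogous case (2) of Lemma \ref{lem:known2}. The observation that removes the difficulty is the exact numerical coincidence above: once $d < 3 a_{\max}$ has forced $d = 2 a_{\max}$ and $a_k = 2$, condition (4) of Theorem \ref{thmapp} is literally the index inequality of Theorem \ref{mainthm1}. I would also verify that there is no circularity, namely that Theorem \ref{mainthm1} is proved in Section \ref{sec:pfthm1}, prior to the supplemental Theorem \ref{thmapp}, so that invoking it here is legitimate.
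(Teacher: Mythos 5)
Your proof is correct, and it reaches the same endpoint as the paper — reducing case (2) to Theorem \ref{mainthm1} via the index bound $I_X \le a_{\max}$ — but by a noticeably different and cleaner route. Case (1) is identical (adjunction plus Lemma \ref{lem:Totaro}). For case (2), the paper first restricts to the Fano range $d < a_{\Sigma}$, invokes Lemma \ref{lem:smwhnumerics}.(3) to get $d = 2a_{\max}$ together with a three-way case split ($r = n+1$, excluded because some $a_k > 1$; $r = n$; $r = n-1$ with $a_n = 2$), and in each surviving case translates condition (4) of Theorem \ref{thmapp} into $I_W \le a_{\max}$. You instead exploit the inequality $d \ge a_k a_{\max}$ contained in condition (2) of Theorem \ref{thmapp}: combined with $a_k \ge 2$ and $d < 3a_{\max}$ it forces $a_k = 2$, and then $a_{\max} \mid d$ (from Lemma \ref{lem:charactsmoothwh}) pins down $d = 2a_{\max}$, after which condition (4) reads $2a_{\max} \ge \frac{2}{3}a_{\Sigma}$, i.e. $I_X = a_{\Sigma} - 2a_{\max} \le a_{\max}$, uniformly and with no case analysis. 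What each buys: your argument is shorter and bypasses Lemma \ref{lem:smwhnumerics} entirely; the paper's argument uses only conditions (1) and (4) of Theorem \ref{thmapp}, so it works without appealing to condition (2) — and in fact your computation implicitly shows that the paper's subcase $a_k = a$ (in its case (iii)) is vacuous, since such a $k$ would violate condition (2). Your concern about circularity is correctly resolved: Theorem \ref{mainthm1} is established in Section \ref{sec:pfthm1} independently of Theorem \ref{thmapp}, and the paper's own proof of this lemma invokes it in exactly the same way.
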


\begin{proof}
Let $W = W_d \subset \mbP_{\mbC} (a_0,\dots,a_{n+1})$ be a very general smooth well formed weighted hypersurface of degree $d$.
(1) is obvious and we omit the proof.
We may assume $d < a_{\Sigma}$, that is, $W$ is Fano, in the following.

We prove (2).
Suppose that $d < 3 a_{\max}$.
Then, by Lemma \ref{lem:smwhnumerics}.(3), $d = 2 a_{\max}$ and we are in one of the cases: (i) $r = n+1$, (ii) $r = n$, (iii) $r = n-1$ and $a_n = 2$.

The case (i) does not happen since we are assuming the existence of $a_k > 1$.

Suppose that we are in case (ii).
Then we have $W = W_{2 a} \subset \mbP (1^{n+1},a)$ and $a_k = a > 1$.
The condition (4) in Theorem \ref{thmapp} is equivalent to $a \ge n-1$ which implies $I_X = n+1 - a \le 2 \le a = a_{\max}$.
Thus $W$ is not stably rational by Theorem \ref{mainthm1}.

Suppose that we are in case (iii).
Then $W = W_{2 a} \subset \mbP (1^n, 2, a)$ for some odd $a \ge 3$.
There are two possibility for the choice of $k$: either $a_k = 2$ or $a_k = a$.
If $a_k = 2$ (resp.\ $a_k = a$), then the condition (4) of Theorem \ref{thmapp} is equivalent to $2 a \ge n+2$ (resp.\ $a \ge n$), and in both cases we have $I_W = n + 2 - a \le a = a_{\max}$.
It follows that $W$ is not stably rational by Theorem \ref{mainthm1} and (2) is proved.
\end{proof}

Hence, in addition to the conditions explicitly given in Theorem \ref{thmapp}, we may assume that the following hold.

\begin{Cond} \label{cdapp}
\begin{enumerate}
\item $a_0,\dots,a_{n+1}$ are mutually coprime to each other.
\item $d$ is divisible by $a_{\Pi}$.
\item $2 r \ge n + 1$.
\item $b = d/a_k$ is coprime to $a_k$.
\item $3 a_{\max} \le d < a_{\Sigma}$.
\end{enumerate}
\end{Cond}

Note that (1), (2), (3) follows from Lemmas \ref{lem:charactsmoothwh}, and \ref{lem:smwhnumerics}, (4) follows from the condition (3) of Theorem \ref{thmapp} and (5) follows from Lemma \ref{lem:knownapp}.

In the following we choose and fix a prime number $p$ which divides $a_k$.

\begin{Rem}
By considering the variety
\[
\mcX := (y^{a_k} - f = t y - g = 0) \subset \mbP_{\mbC} (a_0,\dots,a_{n+1},b) \times \mbA^1_t,
\]
where we take $x_0,\dots,x_{n+1},y$ as homogeneous coordinates of degree $a_0,\dots,a_{n+1},b = d/a_k$, respectively, and $f, g \in \mbC [x_0,\dots,x_{n+1}]$ are very general homogeneous polynomials of degree $d, b$, respectively, we see that a very general weighed hypersurface $W$ of degree $d$ in $\mbP_{\mbC} (a_0,\dots,a_{n+1})$ degenerates to a complete intersection
\[
W' = (y^{a_k} - f = g = 0) \subset \tilde{\mbP}_{\mbC} := \mbP_{\mbC} (a_0,\dots,a_{n+1}, b).
\]
By Lemma \ref{lem:smXoapp} below, $W'$ is smooth.
We consider reduction modulo $p$ of $W'$ and set
\[
X = (y^{a_k} - f = g = 0) \subset \mbP_{\K} (a_0,\dots,a_{n+1},b),
\]
where $\K$ is an algebraically closed field of characteristic $p$ and $f, g \in \K [x_0,\dots,x_{n+1}]$ are very general homogeneous polynomials of degree $d, b$, respectively.
By the same argument as in Remark \ref{rem:spargumentThm2}, $W$ is not stably rational if there is a universally $\CH_0$-trivial resolution $\varphi \colon \tilde{X} \to X$ such that $\tilde{X}$ is not universally $\CH_0$-trivial.
\end{Rem}

\begin{Lem} \label{lem:smXoapp}
Let $K$ be an algebraically closed field.
Let $f, g \in K [x_0,\dots,x_{n+1}]$ be general homogeneous polynomials of degree $d, b = d/a_k$, respectively, and define
\[
\begin{split}
X_K &:= (y^{a_k} - f = g = 0) \subset \tilde{\mbP}_K := \mbP_{K} (a_0,\dots,a_{n+1},b), \\
\Gamma &= \bigcap_{0 \le i \le n+1, i \ne k} (x_i = 0) \subset \tilde{\mbP}_K.
\end{split}
\]
where $x_0,\dots,x_{n+1},y$ are homogeneous coordinates of degree $a_0,\dots,a_{n+1}, b$, respectively.
Then the following assertions hold.
\begin{enumerate}
\item $X_K$ is smooth along $\Gamma \cap X_K$.
\item If $\operatorname{char} (K) = 0$, then $X_K$ is smooth.
\end{enumerate}
\end{Lem}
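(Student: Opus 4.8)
The plan is to establish (1) first, over an arbitrary field, and then deduce (2) from it in characteristic $0$. \emph{For (1)}, I first observe that $\Gamma\cong\mbP(a_k,b)$ with $\gcd(a_k,b)=1$ by Condition \ref{cdapp}.(4), and that $g$ restricts to $0$ on $\Gamma$, since a surviving monomial would have to be $x_k^{b/a_k}$ and $a_k\nmid b$. Hence $\Gamma\cap X_K$ is cut out inside $\Gamma$ by $y^{a_k}-\alpha x_k^b$, where $\alpha$ is the coefficient of $x_k^b$ in $f$, nonzero for general $f$; so $\Gamma\cap X_K$ is the single point $\msq$. Since the two coordinate points $\msp_k$ and $\msp_y=(0\!:\!\cdots\!:\!0\!:\!1)$ of $\Gamma$ fail to lie on $X_K$, we have $\msq\in V:=(x_k\neq 0)\cap(y\neq 0)$. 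By the weighted chart description of Section 3.1, fixing $\lambda,\mu$ with $\lambda a_k-\mu b=1$ and $Q=x_k^\lambda y^{-\mu}$, the chart $V$ is isomorphic to $\mbA^{n+1}_{\tilde x_i}\times(\mbA^1_u\setminus\{o\})$ with $\tilde x_i=x_i/Q^{a_i}$ $(i\neq k)$, $u=y^{a_k}/x_k^b$, and $x_k/Q^{a_k}=u^\mu$, $y/Q^b=u^\lambda$.

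Dividing the two defining equations by $Q^d$ and $Q^b$ presents $X_K\cap V$ as $\{\tilde F_1=\tilde g=0\}$ with $\tilde F_1=u^{\lambda a_k}-\tilde f$ and $\tilde g=g/Q^b$, where every monomial of $\tilde g$, and every monomial of $\tilde f$ except the term $\alpha u^{\mu b}$, lies in the ideal $(\tilde x_i:i\neq k)$, and $\msq=(\tilde x=0,\,u=\alpha)$. The crucial computation is the $u$-derivative of $\tilde F_1$ at $\msq$: the ideal-terms drop out, leaving
\[
\frac{\prt\tilde F_1}{\prt u}(\msq)=\frac{d}{du}\big(u^{\lambda a_k}-\alpha u^{\mu b}\big)\Big|_{u=\alpha}=(\lambda a_k-\mu b)\,\alpha^{\mu b}=\alpha^{\mu b}\neq 0,
\]
using $\lambda a_k-\mu b=1$. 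The main obstacle is precisely that (1) must hold in characteristic $p\mid a_k$, where the derivative of the $u^{\lambda a_k}$ term alone vanishes; it is resolved by the characteristic-free identity above, which hinges on $\lambda a_k-\mu b=1$. Meanwhile $\prt\tilde g/\prt u(\msq)=0$, whereas Condition (3) of Theorem \ref{thmapp} (an index $l\neq k$ with $a_k\mid b-a_l$, together with $b\geq a_{\max}\geq a_l$ coming from Condition (2)) shows that $g$ contains the monomial $x_l x_k^{(b-a_l)/a_k}$, whence $\prt\tilde g/\prt\tilde x_l(\msq)\neq 0$ for general $g$. Thus $d\tilde g$ has vanishing $u$-component but nonzero $\tilde x_l$-component while $d\tilde F_1$ has nonzero $u$-component, so the two differentials are independent at $\msq$ and $X_K$ is smooth there, proving (1).

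\emph{For (2)}, I assume $\chara(K)=0$ and set $Z'=(g=0)\subset\mbP(a_0,\dots,a_{n+1})$, with $\pi\colon X_K\to Z'$ the projection forgetting $y$, which exhibits $X_K$ as the degree-$a_k$ cyclic cover $y^{a_k}=f|_{Z'}$. Since $b=d/a_k$ is divisible by $a_\Pi/a_k$ by Condition \ref{cdapp}.(2), and the remaining numerical hypotheses of Lemma \ref{lem:crismcharp3} hold (they follow from Condition (2) of Theorem \ref{thmapp} together with Conditions \ref{cdapp}.(1),(2),(3)), a general $Z'$ is smooth away from $\msp_k$. Over $Z'\setminus\{\msp_k\}$, which lies in the smooth locus of the base because $g(\msp_j)\neq 0$ for $j\neq k$, the cover is smooth as soon as its branch divisor $(f|_{Z'}=0)$ is smooth; in characteristic $0$ this is Bertini applied to $|\mcO_{Z'}(d)|$, whose restriction maps are surjective at smooth points by Lemma \ref{lem:restP2} since $a_\Pi\mid d$ — this is the only place $\chara(K)=0$ is used. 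Finally $\pi^{-1}(\msp_k)=\Gamma$, so $X_K\cap\pi^{-1}(\msp_k)=\Gamma\cap X_K$ is smooth by part (1). Combining the two loci, $X_K$ is smooth everywhere, which establishes (2).
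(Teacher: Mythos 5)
Your proof is correct and follows essentially the same route as the paper's: for (1), the same coprime-weight chart $V=(x_k\neq 0)\cap(y\neq 0)$ with $\lambda a_k-\mu b=1$ and the same triangular Jacobian computation (the paper normalizes the coefficient of $x_k^b$ to $1$ where you keep $\alpha$, and it likewise uses the monomial $x_l x_k^{(b-a_l)/a_k}$ supplied by condition (3) of Theorem \ref{thmapp}); for (2), the same decomposition into the cyclic cover over $Z'\setminus\{\msp_k\}$ (smooth by Lemma \ref{lem:crismcharp3} and Bertini in characteristic $0$) plus the fiber over $\msp_k$ handled by part (1).
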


\begin{proof}
In this proof, re-ordering the $a_i$, we assume that $k = n+1$, i.e.\ $a_k = a_{n+1}$.

We first prove (1).
We may assume that the coefficient of the degree $d$ monomial $x_{n+1}^b$ in $f$ is $1$ since $f$ is general.
Then 
\[
\Gamma \cap X_K = (x_0 = \cdots = x_n = 0) \cap X_K = (x_0 = \cdots = x_n = y^{a_{n+1}} - x_{n+1}^b = 0).
\] 
Let $\msp \in \Gamma \cap X_K$ be any point.
Then we can write $\msp = (0\!:\!\cdots\!:\!0\!:\!\alpha\!:\!\beta)$ for some non-zero $\alpha, \beta \in K$.
Since $b$ is coprime to $a_k = a_{n+1}$, we can take positive integers $\lambda,\mu$ such that $\lambda a_{n+1} - \mu b = 1$, and set $Q = x_{n+1}^{\lambda} y^{-\mu}$.
Then the open set $V = (x_{n+1} \ne 0) \cap (y \ne 0) \subset \tilde{\mbP}_K$ is isomorphic to $\mbA^{n+1}_{\tilde{x}_0,\dots,\tilde{x}_{n-1}} \times (\mbA^1_u \setminus \{o\})$, where $\tilde{x}_i = x_i/Q^{a_i}$ and $u = y^{a_{n+1}}/x_{n+1}^b$.
Note that $\msp$ corresponds to the point $(0,\dots,0,\gamma) \in \mbA^{n+1} \times (\mbA^1 \setminus \{o\})$ for some non-zero $\gamma \in K$.
Let $l \ne n+1$ be such that $b-a_l$ is divisible by $a_{n+1}$ (such an $l$ exists by the assumption of Theorem \ref{thmapp}) and write $b - a_l = m a_{n+1}$, where $m = (b-a_l)/a_{n+1}$ is a positive integer.
We may assume that the coefficient of the degree $b$ monomial $x_l x_{n+1}^m$ in $g$ is $1$ since $g$ is general.
Then we have
\[
\tilde{f} := f|_V = u^{\lambda a_{n+1}} - u^{\mu b} + \tilde{f}_1, \ 
\tilde{g} := g|_V = \tilde{x} u^{\nu} + \tilde{g}_1,
\]
where $\nu \ne 0$ is an integer, $\tilde{f}_1, \tilde{g}_1 \in (\tilde{x}_0,\dots,\tilde{x}_n)$ and $\tilde{x}_l u^{\nu}$ is the unique unique term in $\tilde{g}$ consisting only of $\tilde{x}_l$ and $u$.
We see that $X_K \cap V$ is the subvariety of $V$ defined by $\tilde{f} = \tilde{g} = 0$ and we compute
\[
\frac{\prt \tilde{f}}{\prt u} (\msp) = \frac{\prt \tilde{g}}{\prt \tilde{x}_l} (\msp) = 1,
\]
which shows that $X_K$ is smooth at any point of $\Gamma \cap X_K$.
This proves (1).

We prove (2).
We assume $\operatorname{char} (K) = 0$.
We set 
\[
Z_K := (g = 0) \subset \mbP_K := \mbP_K (a_0,\dots,a_{n+1}),
\]
and let $D_K = (f = g = 0) \subset \mbP_K$ be the divisor on $Z_K$ cut out by the equation $f = 0$.
Note that $a_0,\dots,a_{n+1}$ are mutually coprime to each other, $b = \deg g$ is divisible by $a_{\Pi}/a_{n+1}$ and $b > (a_i-1) (a_j-1)$ for any $i \ne j$.
Hence, by Lemma \ref{lem:crismcharp3}, $Z_K$ is smooth outside the point $\msq := (0\!:\!\cdots\!:\!0\!:\!1) \in \mbP_K$.
We see that $D_K$ is a general member of the base point free linear system $|\mcO_{Z_K} (d)|$.
By Bertini theorem, $D_K$ is smooth.
Let $\pi \colon X_K \to Z_K$ be the natural morphism.
Then, since $Z_K$ is smooth outside $\msq$ and $X \setminus \pi^{-1} (\msq) \to Z_K \setminus \{\msq\}$ is a cyclic covering branched along the smooth divisor $D_K$, we conclude that $X_K \setminus \pi^{-1} (\msq)$ is smooth.
We have $\pi^{-1} (\msq) = \Gamma \cap X_K$.
By (1), $X_K$ is smooth at any point of $\pi^{-1} (\msq)$, hence $X_K$ is smooth.
\end{proof}

In the following, we work over an algebraically closed field $\K$ of characteristic $p$, where we recall that $p$ divides $a_k$.
Let $f, g \in \K [x_0,\dots,x_{n+1}]$ be general homogeneous polynomials of degree $d, b = d/a_k$, respectively.
We set
\[
\begin{split}
X &= (y^{a_k} - f = g = 0) \subset \mbP_{\K} (a_0,\dots,a_{n+1},b), \\
Z &= (g = 0) \subset \mbP := \mbP_{\K} (a_0,\dots,a_{n+1}),
\end{split}
\]
and let $\pi \colon X \to Z$ be the natural morphism.
By the above argument Theorem \ref{thmapp} follows if we show the existence of a universally $\CH_0$-trivial resolution of singularities $\varphi \colon \tilde{X} \to X$ such that $\tilde{X}$ is not universally $\CH_0$-trivial.

For $i = 0,\dots,n+1$, we define $\msq_i = (0\!:\!\cdots\!:\!1\!:\!\cdots\!:\!0) \in \mbP$, where the unique $1$ is in the $(i+1)$st position.
Since $b$ is not divisible by $a_k$, we have $\msq_k \in Z$.
We set $Z^{\circ} = Z \setminus \{\msq_k\}$ and $X^{\circ} = \pi^{-1} (Z^{\circ})$.

\begin{Lem}
$Z^{\circ}$ is smooth, and $X$ is smooth along $X \setminus X^{\circ}$.
\end{Lem}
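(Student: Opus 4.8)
The plan is to establish the two assertions separately, each by reduction to a lemma already proved. For the first, I would apply Lemma \ref{lem:crismcharp3} to the general weighted hypersurface $Z = (g = 0)$ of degree $b = d/a_k$ in $\mbP = \mbP_{\K}(a_0,\dots,a_{n+1})$, whose conclusion is precisely smoothness away from the point $\msp_k = \msq_k$. The task is then to check the four hypotheses of that lemma with the degree taken to be $b$ (not $d$). Mutual coprimality of the $a_i$ is Condition \ref{cdapp}.(1). The divisibility hypothesis, that $b$ be divisible by $a_{\Pi}/a_k$, follows from Condition \ref{cdapp}.(2): since $d = a_k b$ is divisible by $a_{\Pi} = a_k \cdot (a_{\Pi}/a_k)$, cancelling $a_k$ gives $a_{\Pi}/a_k \mid b$; moreover $a_k > 1$ by assumption (1) of Theorem \ref{thmapp}, so $k \in \{r+1,\dots,n+1\}$ as required. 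The inequalities $b > (a_i-1)(a_j-1)$ for all $i < j$ and $b \ge a_{\max}$ are exactly $d/a_k > (a_i-1)(a_j-1)$ and $d \ge a_k a_{\max}$, i.e.\ assumption (2) of Theorem \ref{thmapp}. Finally $2r \ge n$ follows from Condition \ref{cdapp}.(3), which gives $2r \ge n+1$. With all four verified, Lemma \ref{lem:crismcharp3} yields that $Z$ is smooth outside $\msq_k$, that is, $Z^{\circ}$ is smooth.

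For the second assertion, I would first rewrite the locus in question set-theoretically as
\[
X \setminus X^{\circ} = X \setminus \pi^{-1}(Z^{\circ}) = \pi^{-1}(Z \setminus Z^{\circ}) = \pi^{-1}(\msq_k),
\]
and since $\msq_k$ is the point with $x_i = 0$ for all $i \ne k$, its preimage is exactly $\Gamma \cap X$, where $\Gamma = \bigcap_{i \ne k}(x_i = 0)$ is the locus appearing in Lemma \ref{lem:smXoapp}. Thus smoothness of $X$ along $X \setminus X^{\circ}$ is the same as smoothness of $X$ along $\Gamma \cap X$, which is precisely the content of Lemma \ref{lem:smXoapp}.(1) applied over the field $K = \K$ of characteristic $p$; that lemma holds over an arbitrary algebraically closed field and requires only that $f$ and $g$ be general, as they are here.

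The argument is essentially an assembly of the two earlier results, so I do not anticipate a serious obstacle. The only points requiring care are bookkeeping matters: deducing the divisibility $a_{\Pi}/a_k \mid b$ from $a_{\Pi} \mid d$, keeping track that the relevant degree is $b = d/a_k$ rather than $d$ so that the numerical conditions invoked are the $b$-versions of assumption (2) of Theorem \ref{thmapp}, and noting the harmless identification $\msq_k = \msp_k$ of the same coordinate point across the two sections.
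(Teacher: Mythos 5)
Your proposal is correct and follows exactly the paper's own route: the paper proves this lemma by citing Lemma \ref{lem:crismcharp3} for the smoothness of $Z^{\circ}$ and Lemma \ref{lem:smXoapp}.(1) for the smoothness of $X$ along $X \setminus X^{\circ} = \pi^{-1}(\msq_k) = \Gamma \cap X$. Your write-up simply makes explicit the hypothesis-checking (divisibility of $b$ by $a_{\Pi}/a_k$, the $b$-versions of the numerical inequalities, $2r \ge n$, and the identification of $X \setminus X^{\circ}$ with $\Gamma \cap X$) that the paper leaves to the reader, and all of these verifications are accurate.
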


\begin{proof}
The first and second assertions follows from Lemmas \ref{lem:crismcharp3} and \ref{lem:smXoapp}.(1).
\end{proof}

We set $\mcL = \mcO_{Z^{\circ}} (b)$, which is an invertible sheaf.
We can view $f$ (or more precisely $f|_{Z^{\circ}}$) as a global section of $\mcL^{a_k} = \mcO_{Z^{\circ}} (d)$.
The restriction $\pi^{\circ} = \pi|_{X^{\circ}} \colon X^{\circ} \to Z^{\circ}$ is the covering obtained by taking the $a_k$th roots of $f \in H^0 (Z^{\circ}, \mcL^{a_k})$.
In the following, re-ordering the $x_i$, we assume $a_0 = \cdots = a_r = 1$ and  $a_i > 1$ for any $i > r$.
We set $\Delta = (x_0 = \cdots = x_r = 0) \subset \mbP$, $\Delta^{\circ} = \Delta \setminus \{\msq_k\}$ and $\Delta_Z = \Delta \cap Z$, $\Delta^{\circ}_Z = \Delta^{\circ} \cap Z$.

\begin{Lem} \label{lem:admcrapp}
A general $f \in H^0 (Z^{\circ}, \mcL^p)$ has only admissible critical points on $Z^{\circ}$.
\end{Lem}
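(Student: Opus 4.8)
The plan is to follow the two-step pattern of Lemmas \ref{lem:noncritdelta2} and \ref{lem:admcr2}: I would split $Z^{\circ}$ into the locus $Z^{\circ}\setminus\Delta^{\circ}_Z$ covered by the weight-one affine charts, where surjectivity of $\rest^4_{\msp}$ forces every critical point of a general $f$ to be admissible, and the deeper locus $\Delta^{\circ}_Z$, where a dimension count rules out critical points altogether. The whole argument rests on the observation that $Z^{\circ}$ lies in the smooth locus of $\mbP$. Indeed, the singular points of $\mbP$ are the $\msq_i$ with $a_i>1$; since $a_{\Pi}\mid d$ (Condition \ref{cdapp}.(2)) the degree $b=d/a_k$ is divisible by $a_i$ for every $i\ne k$, so a general $g$ satisfies $g(\msq_i)\ne 0$ for $i\ne k$, whereas $a_k\nmid b$ (Condition \ref{cdapp}.(4)) forces $g(\msq_k)=0$. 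Hence $\msq_k$ is the only singular point of $\mbP$ lying on $Z$, and it has been removed in passing to $Z^{\circ}$.

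First I would treat the points $\msp\in Z^{\circ}\cap U$, where $U=U_0\cup\cdots\cup U_r=\mbP\setminus\Delta$. Since $d\ge 3a_{\max}$ by Condition \ref{cdapp}.(5), Lemma \ref{lem:restP}.(1) with $l=3$ shows that
\[
\rest^4_{\msp}\colon H^0(\mbP,\mcO_{\mbP}(d))\to \mcO_{\mbP}(d)\otimes(\mcO_{\mbP,\msp}/\mfm^4_{\msp})
\]
is surjective for every $\msp\in U$. As both $\mbP$ and $Z^{\circ}$ are smooth at such a point, Remark \ref{rem:restZ} upgrades this to surjectivity of the corresponding restriction map to $Z^{\circ}$, and then Remark \ref{rem:admcr} lets me conclude that a general section $f$ has only admissible critical points on $Z^{\circ}\cap U=Z^{\circ}\setminus\Delta^{\circ}_Z$.

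Next I would dispose of $\Delta^{\circ}_Z$ exactly as in Lemma \ref{lem:noncritdelta2}. Because $a_0,\dots,a_{n+1}$ are mutually coprime and $a_{\Pi}\mid d$ (Conditions \ref{cdapp}.(1),(2)), Lemma \ref{lem:restP2} together with Remark \ref{rem:restZ} gives surjectivity of $\rest^2_{\msp}$ onto $\mcO_{Z^{\circ}}(d)\otimes(\mcO_{Z^{\circ},\msp}/\mfm^2_{\msp})$ at every $\msp\in Z^{\circ}$, so the sections having a critical point at a fixed $\msp\in\Delta^{\circ}_Z$ form a linear subspace of codimension $n=\dim Z^{\circ}$. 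Since $r\ge 2$ (from $2r\ge n+1$ and $n\ge 3$) we have $\dim\Delta^{\circ}_Z\le\dim\Delta=n-r<n$, so the dimension count
\[
\dim H^0(Z^{\circ},\mcL^{a_k})-n+\dim\Delta^{\circ}_Z<\dim H^0(Z^{\circ},\mcL^{a_k})
\]
shows that a general $f$ has no critical point along $\Delta^{\circ}_Z$ at all. Combining the two loci finishes the proof.

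I expect the only genuine obstacle to be that the weight-one charts do not cover $\Delta$, so the $\rest^4$-surjectivity feeding the admissibility criterion of Remark \ref{rem:admcr} is unavailable there; the remedy is to exploit that $\Delta^{\circ}_Z$ is strictly lower-dimensional than $Z^{\circ}$ and to kill critical points outright using only $\rest^2$. The one bookkeeping point needing care is the claim that $\msq_k$ is the unique singular point of $\mbP$ meeting $Z$, which is precisely where the coprimality input $a_k\nmid b$ of Condition \ref{cdapp}.(4) enters.
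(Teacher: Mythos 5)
Your proof is correct, and its overall architecture is the paper's: admissibility on $Z^{\circ}\setminus\Delta^{\circ}_Z$ via surjectivity of $\rest^4_{\msp}$ (from $d\ge 3a_{\max}$, Lemma \ref{lem:restP}.(1) and Remark \ref{rem:admcr}), plus outright exclusion of critical points on $\Delta^{\circ}_Z$ by a dimension count; your identification of $\msq_k$ as the unique singular point of $\mbP$ lying on $Z$ also matches the paper. The genuine difference is the input powering the $\Delta^{\circ}_Z$ step. You import the mechanism of Lemma \ref{lem:noncritdelta2}: since the weights are coprime and $a_{\Pi}\mid d$ (Conditions \ref{cdapp}.(1),(2)), Lemma \ref{lem:restP2} gives full surjectivity of $\rest^2_{\msp}$ at every smooth point of $\mbP$, hence via Remark \ref{rem:restZ} the critical-point condition has codimension $n$ in the space of sections, and the count closes using only $r\ge 1$. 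The paper instead decomposes $\Delta^{\circ}_Z$ into the charts $U_{i,j}$ and applies Lemma \ref{lem:restP}.(2), obtaining only an image of dimension at least $r$, so its count genuinely needs Condition \ref{cdapp}.(3), i.e.\ $2r\ge n+1$. Your route buys two things: it makes no use of $2r\ge n+1$ in this lemma, and it treats all points of $\Delta^{\circ}_Z$ uniformly, whereas the paper's union as written ranges only over pairs $i,j\ne k$ and so omits points of $\Delta^{\circ}_Z$ whose only nonzero coordinates are $x_k$ and one other $x_i$ (such points exist in general; they are handled by the same argument in the chart $U_{i,k}$, which your version covers automatically). So the proposal is not just correct but, on this one step, slightly more economical than the printed proof.
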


\begin{proof}
For $r < i \ne j \le n+1$, we set $U_{i,j} = (x_i \ne 0) \cap (x_j \ne 0) \subset \mbP$.
We have
\[
\Delta^{\circ}_Z = \bigcup_{r < i < j \le n+1, i,j \ne k} \Delta_Z \cap U_{i,j}.
\]
Since $d \ge a_i a_j$ for any $i, j \ne k$, we can apply Lemma \ref{lem:restP}.(2) (cf.\ Remark \ref{rem:restZ}) and the image of the restriction map
\[
H^0 (Z, \mcO_Z (d)) = H^0 (Z^{\circ}, \mcL^{a_k}) \to \mcL^{a_k} \otimes (\mcO_{Z^{\circ}}/\mfm_{\msp}^2)
\]
is of dimension at least $r$, as a $\K$-vector space, for any $\msp \in \Delta^{\circ}_Z$.
Since $\dim \Delta^{\circ}_Z = n - (r+1)$ and $2 r \le n + 1$, we can conclude by counting dimensions that a general $f \in H^0 (Z^{\circ}, \mcL^p)$ does not have a critical point along $\Delta^{\circ}_Z$.

The rest of the proof is completely the same as that of Lemma \ref{lem:admcr2}, where all we need is the condition $d \ge 3 a_{\max}$.
\end{proof}

\begin{Prop}
The variety $X$ admits a universally $\CH_0$-trivial resolution $\varphi \colon \tilde{X} \to X$ of singularities such that $\tilde{X}$ is not universally $\CH_0$-trivial.
\end{Prop}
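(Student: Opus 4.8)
The plan is to argue exactly as in Proposition~\ref{prop:exisresol2} and its analogue in Section~\ref{sec:pfthm1}, but to apply the covering technology of Lemma~\ref{lem:covtech} to the open piece $\pi^{\circ} \colon X^{\circ} \to Z^{\circ}$ instead of to $\pi \colon X \to Z$; this detour is forced by the fact that $Z$ may be singular at $\msq_k$, whereas $Z^{\circ}$ is smooth. Since $p$ divides $a_k$, the root exponent $m = a_k$ is divisible by $p$, and by Lemma~\ref{lem:admcrapp} the section $f \in H^0(Z^{\circ}, \mcL^{a_k})$ has only admissible critical points on $Z^{\circ}$. Hence Lemma~\ref{lem:covtech} produces an invertible subsheaf $\mcM^{\circ}$ of $(\Omega_{X^{\circ}}^{n-1})^{\vee\vee}$ together with a universally $\CH_0$-trivial resolution of $X^{\circ}$, obtained by blowing up the isolated admissible critical points.

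The next step is to identify $\mcM^{\circ}$ numerically. By Lemma~\ref{lem:covtech}(1), $\mcM^{\circ} \cong {\pi^{\circ}}^*(\omega_{Z^{\circ}} \otimes \mcL^{a_k})$. Applying adjunction to $Z = (g = 0) \subset \mbP$ with $\deg g = b$ and restricting to the smooth locus $Z^{\circ}$ gives $\omega_{Z^{\circ}} \cong \mcO_{Z^{\circ}}(b - a_{\Sigma})$, and $\mcL^{a_k} = \mcO_{Z^{\circ}}(a_k b) = \mcO_{Z^{\circ}}(d)$ because $d = a_k b$. Therefore $\mcM^{\circ} \cong \mcO_{X^{\circ}}(d + b - a_{\Sigma})$. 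As $X \setminus X^{\circ} = \pi^{-1}(\msq_k)$ is a finite set and $\dim X = n \ge 3$, this complement has codimension at least $2$, so pushing $\mcM^{\circ}$ forward along the open immersion $X^{\circ} \inj X$ extends the isomorphism to a line bundle $\mcM \cong \mcO_X(d + b - a_{\Sigma})$ on $X$, realized as a subsheaf of $(\Omega_X^{n-1})^{\vee\vee}$.

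The crucial structural point, which I expect to be the main (though mild) obstacle, is to upgrade the resolution of $X^{\circ}$ to a resolution of all of $X$. Here one uses that $X$ is smooth along $X \setminus X^{\circ}$ by the lemma preceding Lemma~\ref{lem:admcrapp}: the entire singular locus of $X$ is contained in $X^{\circ}$, so the blow-up furnished by Lemma~\ref{lem:covtech} is an isomorphism outside these isolated singular points. Gluing it with the identity over $X \setminus \Sing(X)$ then yields a global resolution $\varphi \colon \tilde{X} \to X$ that is universally $\CH_0$-trivial and satisfies $\varphi^* \mcM \inj \Omega_{\tilde{X}}^{n-1}$.

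Finally I would check the nonvanishing of global sections. The degree of $\mcM$ satisfies
\[
d + b - a_{\Sigma} = d + \frac{d}{a_k} - a_{\Sigma} = \frac{a_k + 1}{a_k}\, d - a_{\Sigma},
\]
which is nonnegative exactly because $d \ge \frac{a_k}{a_k + 1} a_{\Sigma}$, which is condition~(4) of Theorem~\ref{thmapp}. Consequently $H^0(X, \mcM) = H^0(X, \mcO_X(d + b - a_{\Sigma})) \ne 0$, and the inclusion $\varphi^* \mcM \inj \Omega_{\tilde{X}}^{n-1}$ gives $H^0(\tilde{X}, \Omega_{\tilde{X}}^{n-1}) \ne 0$. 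By Lemma~\ref{lem:Totaro}, this shows that $\tilde{X}$ is not universally $\CH_0$-trivial, completing the proof.
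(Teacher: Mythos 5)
Your proposal is correct and follows essentially the same route as the paper's own proof: apply Lemma \ref{lem:covtech} to the covering $\pi^{\circ} \colon X^{\circ} \to Z^{\circ}$ using Lemma \ref{lem:admcrapp}, push $\mcM^{\circ} \cong \mcO_{X^{\circ}}(b - a_{\Sigma} + d)$ forward to $X$, and conclude via condition (4) of Theorem \ref{thmapp} and Lemma \ref{lem:Totaro}. The only difference is that you make explicit the step the paper leaves implicit — that the resolution of $X^{\circ}$ glues with the identity over the smooth locus $X \setminus X^{\circ}$ to give a global universally $\CH_0$-trivial resolution of $X$ — which is a welcome clarification rather than a deviation.
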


\begin{proof}
Let $\mcM^{\circ}$ be the invertible subsheaf of $(\Omega_{X^{\circ}}^{n-1})^{\vee \vee}$ associated to the covering $\pi^{\circ} \colon X^{\circ} \to Z^{\circ}$ and $\mcM$ the pushforward of $\mcM^{\circ}$ via the open immersion $X^{\circ} \inj X$.
We have
\[
\begin{split}
\mcM^{\circ} &\cong {\pi^{\circ}}^* (\omega_{Z^{\circ}} \otimes \mcL^{a_k}) \cong \mcO_{X^{\circ}} (b - a_{\Sigma} + d), \\
\mcM &\cong \mcO_X (b - a_{\Sigma} + d).
\end{split}
\]
By Lemmas \ref{lem:admcrapp} and \ref{lem:covtech}, $X$ admits a universally $\CH_0$-trivial resolution $\varphi \colon \tilde{X} \to X$ such that $\varphi^*\mcM \inj \Omega_{\tilde{X}}^{n-1}$.
We see that $H^0 (X, \mcM) \ne 0$ since
\[
b - a_{\Sigma} + d = \frac{a_k+1}{a_k} d - a_{\Sigma} \ge 0.
\]
This shows $H^0 (\tilde{X}, \Omega_{\tilde{X}}^{n-1}) \ne 0$, hence $\tilde{X}$ is not universally $\CH_0$-trivial by Lemma \ref{lem:Totaro}.
\end{proof}

\section{Proof of Corollaries} \label{sec:pfcor}

\begin{proof}[Proof of \emph{Corollary \ref{maincor1}}]
Let $X$ be a very general smooth well formed weighted hypersurface of degree $d$ and index $I$ in $\mbP_{\mbC} (a_0,\dots,a_{n+1})$.
Note that $d = a_{\Sigma} - I$ and we have $a_{\Sigma} \ge n+2$.
If $I \le a_{\max}$, then $X$ is not stably rational.
Hence, in the following we assume that $a_i < I$ for any $i$.
Since $a_0,\dots,a_{n+1}$ are mutually coprime to each other, this in particular implies $a_{\Pi} \le (I-1)!$.

Suppose that $d/a_{\Pi} \ge 2$ is even.
If $n \ge 3 I - 2$, then
\[
d - \frac{2}{3} a_{\Sigma} = \frac{1}{3} a_{\Sigma} - I = \frac{1}{3} (n+2) - I \ge 0.
\]
Thus, by Theorem \ref{mainthm2}., $X$ is not stably rational.

Suppose that $d/a_{\Pi} = 3$.
If $n \ge 4 I - 2$, then by the similar argument as above, we have $d \ge \frac{3}{4} a_{\Sigma}$ and, by Theorem \ref{mainthm2}, $X$ is not stably rational.

Suppose that $d/a_{\Pi} \ge 5$ is odd.
If $n \ge 3 I + 3 (I-1)! - 2$, then
\[
d - a_{\Pi} - \frac{2}{3} a_{\Sigma} = \frac{1}{3} a_{\Sigma} - a_{\Pi} - I \ge \frac{1}{3} (n+2) - (I-1)! - I \ge 0.
\]
Hence, by Theorem \ref{mainthm3}, $X$ is not stably rational.

Finally, suppose that $d/a_{\Pi} = 1$.
Let $(b_1,\dots,b_m)$ be a tuple of mutually coprime integers such that $2 \le b_1 < \cdots < b_m < I$ and $b_{\Sigma} - b_{\Pi} \le I$, where $b_{\Sigma} = b_1 + \cdots + b_m$ and $b_{\Pi} = b_1 \cdots b_m$.
Then a general smooth well formed weighted hypersurface of degree $b_{\Pi}$ in $\mbP (1^{I+b_{\Pi} - b_{\Sigma}},b_1,\dots,b_m)$ is of index $I$.
Conversely, if we are given a smooth well formed hypersurface of degree $d = a_{\Pi}$ in $\mbP (a_0,\dots,a_{n+1})$, then $(a_{r+1},\dots,a_{n+1})$, where $r$ is defined in such a way that $a_i > 1$ if and only if $i > r$, is a tuple of mutually coprime integers satisfying the above conditions.
It is easy to see that a set of tuples $(b_1,\dots,b_m)$ such that $2 \le b_1 < \cdots < b_m < I$ is finite.
This means that there are only finitely many combinations of the weights $(a_0,\dots,a_{n+1})$ such that a weighted hypersurfaces of degree $a_{\Pi}$ in $\mbP (a_0,\dots,a_{n+1})$ is smooth, well formed and of index $I$.
Thus there exists a number $N'_I$ depending only on $I$ such that if $n \ge N'_I$, then $X$ does not exists.

We set 
\[
N_I = \max \{ 3I-2,4I-2,3 I + 3 (I-1)!-2, N'_I \}.
\]
Then the assertion in the corollary holds for this $N_I$.
\end{proof}

\begin{proof}[Proof of \emph{Corollary \ref{maincor2}}]
Let $X = X_d \subset \mbP_{\mbC} (a_0,\dots,a_{n+1})$ be a very general smooth well formed hypersurface of degree $d$.
(1) follows immediately from Theorem \ref{mainthm1}.

Suppose that $I_X = 2$.
Then $I_X > a_{\max}$ if and only if $a_0 = \cdots = a_{n+1} = 1$.
In this case $X$ is a degree $n$ hypersurface in $\mbP^{n+1}$ which is not stably rational except when $n = 3$ by \cite{Totaro}.
This proves (2).

Suppose that $I_X = 3$.
Then $I_X > a_{\max}$ if and only if either $X$ is a hypersurface of degree $n-1$ in $\mbP^{n+1}$ or $X$ is a weighted hypersurface of degree $2 m$ in $\mbP (1^{2m+1},2)$ for $m \ge 2$.
If $X$ is a hypersurface of degree $n-1$ in $\mbP^{n+1}$, then $X$ is not stably rational except when $n = 3,4,5,6,8$ by \cite{Totaro}.
We see from Remark \ref{rem:ex} that $X = X_{2m} \subset \mbP (1^{2m+1},2)$ is not stably rational for any $m \ge 2$.
This proves (3).
\end{proof}

\end{document}